\newcolumntype{L}{>{$}l<{$}}
\newcolumntype{R}{>{$}r<{$}}
\newtheorem{theorem}{Theorem}[section]
\newtheorem{prop}[theorem]{Proposition}
\newtheorem{prop-def}{Proposition-Definition}[section]
\theoremstyle{definition}
\newtheorem{defn}[theorem]{Definition}
\newtheorem{remark}[theorem]{Remark}
\newcommand{\nc}{\newcommand}
\newcommand {\emptycomment}[1]{}
\nc{\delete}[1]{{}}
\nc{\mmargin}[1]{}
\nc{\mlabel}[1]{\label{#1}}  
\nc{\mcite}[1]{\cite{#1}}  
\nc{\mref}[1]{\ref{#1}}  
\nc{\meqref}[1]{\eqref{#1}}  
\nc{\mbibitem}[1]{\bibitem{#1}} 
	\nc{\mlabel}[1]{\label{#1}  
		{\hfill \hspace{1cm}{\bf{{\ }\hfill(#1)}}}}
	\nc{\mcite}[1]{\cite{#1}{{\bf{{\ }(#1)}}}}  
	\nc{\mref}[1]{\ref{#1}{{\bf{{\ }(#1)}}}}  
	\nc{\meqref}[1]{\eqref{#1}{{\bf{{\ }(#1)}}}}  
	\nc{\mbibitem}[1]{\bibitem[\bf #1]{#1}} 
 \font\cyrs=wncyr7
\nc{\vep}{\varepsilon}
\nc{\bin}[2]{ (_{\stackrel{\scs{#1}}{\scs{#2}}})}  
\nc{\binc}[2]{(\!\! \begin{array}{c} \scs{#1}\\
		\scs{#2} \end{array}\!\!)}  
\nc{\bincc}[2]{  ( {\scs{#1} \atop
		\vspace{-1cm}\scs{#2}} )}  
\nc{\oline}[1]{\overline{#1}}
\nc{\mapm}[1]{\lfloor\!|{#1}|\!\rfloor}
\nc{\bs}{\bar{S}}
\nc{\cast}{{\,\mbox{\raisebox{.8pt}{$\scriptstyle \circledast$}}\,}}
\nc{\la}{\longrightarrow}
\nc{\ot}{\otimes}
\nc{\rar}{\rightarrow}
\nc{\dar}{\downarrow}
\nc{\dap}[1]{\downarrow \rlap{$\scriptstyle{#1}$}}
\nc{\defeq}{\stackrel{\rm def}{=}}
\nc{\dis}[1]{\displaystyle{#1}}
\nc{\dotcup}{\ \displaystyle{\bigcup^\bullet}\ }
\nc{\hcm}{\ \hat{,}\ }
\nc{\hts}{\hat{\otimes}}
\nc{\hcirc}{\hat{\circ}}
\nc{\lleft}{[}
\nc{\lright}{]}
\nc{\curlyl}{\left \{ \begin{array}{c} {} \\ {} \end{array}
	\right .  \!\!\!\!\!\!\!}
\nc{\curlyr}{ \!\!\!\!\!\!\!
	\left . \begin{array}{c} {} \\ {} \end{array}
	\right \} }
\nc{\longmid}{\left | \begin{array}{c} {} \\ {} \end{array}
	\right . \!\!\!\!\!\!\!}
\nc{\ora}[1]{\stackrel{#1}{\rar}}
\nc{\ola}[1]{\stackrel{#1}{\la}}
\nc{\scs}[1]{\scriptstyle{#1}} \nc{\mrm}[1]{{\rm #1}}
\nc{\dirlim}{\displaystyle{\lim_{\longrightarrow}}\,}
\nc{\invlim}{\displaystyle{\lim_{\longleftarrow}}\,}
\nc{\dislim}[1]{\displaystyle{\lim_{#1}}} \nc{\colim}{\mrm{colim}}
\nc{\mvp}{\vspace{0.3cm}} \nc{\tk}{^{(k)}} \nc{\tp}{^\prime}
\nc{\ttp}{^{\prime\prime}} \nc{\svp}{\vspace{2cm}}
\nc{\vp}{\vspace{8cm}}
\nc{\modg}[1]{\!<\!\!{#1}\!\!>}
\nc{\intg}[1]{F_C(#1)}
\nc{\lmodg}{\!<\!\!}
\nc{\rmodg}{\!\!>\!}
\nc{\cpi}{\widehat{\Pi}}
\nc{\ssha}{{\mbox{\cyrs X}}} 
\nc{\tsha}{{\mbox{\cyrt X}}}
\nc{\shpr}{\diamond}    
\nc{\labs}{\mid\!}
\nc{\rabs}{\!\mid}
\nc{\btr}{\blacktriangleright}
\nc{\ad}{\mrm{ad}}
\nc{\rRB}{\mathsf{rRB}}
\nc{\cocrRB}{\mathsf{cocrRB}}
\nc{\PH}{\mathsf{PH}}
\nc{\cocPH}{\mathsf{cocPH}}
\nc{\ann}{\mrm{ann}}
\nc{\Aut}{\mrm{Aut}}
\nc{\Der}{\mrm{Der}}
\nc{\Sym}{\mrm{Sym}}
\nc{\br}{\mrm{bre}}
\nc{\can}{\mrm{can}}
\nc{\Cont}{\mrm{Cont}}
\nc{\rchar}{\mrm{char}}
\nc{\cok}{\mrm{coker}}
\nc{\de}{\mrm{dep}}
\nc{\dtf}{{R-{\rm tf}}}
\nc{\dtor}{{R-{\rm tor}}}
\nc{\Dif}{\mrm{Diff}}
\nc{\Div}{\mrm{Div}}
\nc{\End}{\mrm{End}}
\nc{\Ext}{\mrm{Ext}}
\nc{\Fil}{\mrm{Fil}}
\nc{\Fr}{\mrm{Fr}}
\nc{\Frob}{\mrm{Frob}}
\nc{\Gal}{\mrm{Gal}}
\nc{\GL}{\mrm{GL}}
\nc{\Gr}{\mrm{Gr}}
\nc{\Hom}{\mrm{Hom}}
\nc{\Hoch}{\mrm{Hoch}}
\nc{\hsr}{\mrm{H}}
\nc{\hpol}{\mrm{HP}}
\nc{\id}{\mrm{id}}
\nc{\im}{\mrm{im}}
\nc{\inv}{\mrm{inv}}
\nc{\Id}{\mrm{Id}}
\nc{\ID}{\mrm{ID}}
\nc{\Irr}{\mrm{Irr}}
\nc{\incl}{\mrm{incl}}
\nc{\length}{\mrm{length}}
\nc{\NLSW}{\mrm{NLSW}}
\nc{\Lie}{\mrm{Lie}}
\nc{\mchar}{\rm char}
\nc{\mpart}{\mrm{part}}
\nc{\ql}{{\QQ_\ell}}
\nc{\qp}{{\QQ_p}}
\nc{\rank}{\mrm{rank}}
\nc{\rcot}{\mrm{cot}}
\nc{\rdef}{\mrm{def}}
\nc{\rdiv}{{\rm div}}
\nc{\rtf}{{\rm tf}}
\nc{\rtor}{{\rm tor}}
\nc{\res}{\mrm{res}}
\nc{\SL}{\mrm{SL}}
\nc{\Spec}{\mrm{Spec}}
\nc{\tor}{\mrm{tor}}
\nc{\Tr}{\mrm{Tr}}
\nc{\tr}{\mrm{tr}}
\nc{\wt}{\mrm{wt}}
\nc{\bfk}{{\bf k}}
\nc{\bfone}{{\bf 1}}
\nc{\bfzero}{{\bf 0}}
\nc{\detail}{\marginpar{\bf More detail}
	\noindent{\bf Need more detail!}
	\svp}
\nc{\gap}{\marginpar{\bf Incomplete}\noindent{\bf Incomplete!!}
	\svp}
\nc{\FMod}{\mathbf{FMod}}
\nc{\Int}{\mathbf{Int}}
\nc{\Mon}{\mathbf{Mon}}
\nc{\remarks}{\noindent{\bf Remarks: }}
\nc{\Rep}{\mathbf{Rep}}
\nc{\Rings}{\mathbf{Rings}}
\nc{\Sets}{\mathbf{Sets}}
\nc{\Diff}{\mathbf{Diff}}
\nc{\Inte}{\mathbf{Inte}}
\nc{\U}{\mathbf{U}}
\nc{\BA}{{\mathbb A}}   \nc{\CC}{{\mathbb C}}
\nc{\DD}{{\mathbb D}}   \nc{\EE}{{\mathbb E}}
\nc{\FF}{{\mathbb F}}   \nc{\GG}{{\mathbb G}}
\nc{\HH}{{\mathbb H}}   \nc{\LL}{{\mathbb L}}
\nc{\NN}{{\mathbb N}}   \nc{\PP}{{\mathbb P}}
\nc{\QQ}{{\mathbb Q}}   \nc{\RR}{{\mathbb R}}
\nc{\TT}{{\mathbb T}}   \nc{\VV}{{\mathbb V}}
\nc{\ZZ}{{\mathbb Z}}   \nc{\TP}{\widetilde{P}}
\nc{\cala}{{\mathcal A}}    \nc{\calc}{{\mathcal C}}
\nc{\cald}{\mathcal{D}}     \nc{\cale}{{\mathcal E}}
\nc{\calf}{{\mathcal F}}    \nc{\calg}{{\mathcal G}}
\nc{\calh}{{\mathcal H}}    \nc{\cali}{{\mathcal I}}
\nc{\call}{{\mathcal L}}    \nc{\calm}{{\mathcal M}}
\nc{\caln}{{\mathcal N}}    \nc{\calo}{{\mathcal O}}
\nc{\calp}{{\mathcal P}}    \nc{\calr}{{\mathcal R}}
\nc{\cals}{{\mathcal S}}    \nc{\calt}{{\Omega}}
\nc{\calv}{{\mathcal V}}    \nc{\calu}{{\mathcal U}}
\nc{\calw}{{\mathcal W}}
\nc{\calx}{{\mathcal X}}
\nc{\fraka}{{\mathfrak a}}
\nc{\frakb}{\mathfrak{b}}
\nc{\frakg}{{\frak g}}
\nc{\frakh}{{\frak h}}
\nc{\frakl}{{\frak l}}
\nc{\fraks}{{\frak s}}
\nc{\frakB}{{\frak B}}
\nc{\frakm}{{\frak m}}
\nc{\frakM}{{\frak M}}
\nc{\frakp}{{\frak p}}
\nc{\frakW}{{\frak W}}
\nc{\frakX}{{\frak X}}
\nc{\frakS}{{\frak S}}
\nc{\frakA}{{\frak A}}
\nc{\frakx}{{\frakx}}
\nc{\ynr}[1]{\textcolor{orange}{\underline{Yunnan:}#1 }}
\nc{\lir}[1]{\textcolor{red}{\underline{Li:}#1 }}
	\numberwithin{equation}{section}
\begin{document}

\title[Based modules over the complex representation ring of $S_4$]{Categorification of based modules over the complex representation ring of $S_4$}

\author{Wenxia Wu}
\address{School of Mathematics and Information Science, Guangzhou University, Waihuan Road West 230, Guangzhou 510006, China}
\email{wxwu@e.gzhu.edu.cn}

\author{Yunnan Li}
\address{School of Mathematics and Information Science, Guangzhou University, Waihuan Road West 230, Guangzhou 510006, China} \email{ynli@gzhu.edu.cn}

\begin{abstract}
The complex representation rings of finite groups are the fundamental class of fusion rings, categorified by the corresponding fusion categories of complex representations. The category of $\mathbb{Z}_+$-modules of finite rank over such a representation ring is also semisimple.

In this paper, we classify the irreducible based modules of rank up to 5 over the complex representation ring $r(S_4)$ of the symmetric group $S_4$.
Totally 16 inequivalent irreducible based modules are obtained.
Based on such a classification result, we further discuss the categorification of  based modules over $r(S_4)$ by module categories over the complex representation category ${\rm Rep}(S_4)$ of $S_4$ arisen from projective representations of certain subgroups of $S_4$.
\end{abstract}

\thanks{2020 {\it Mathematics Subject Classification}. 13C05, 18M20, 19A22.}

\keywords{$\mathbb{Z}_+$-module, $\mathbb{Z}_+$-ring, representation ring, module category, symmetric group}


\maketitle

\tableofcontents

\allowdisplaybreaks

\section{Introduction}
\smallskip
Tensor categories should be thought as counterparts of rings in the world of categories, i.e., the categorification of groups and rings ~\cite{BW,DMO,DGNO,ENO1,ENO2,M1}. They are ubiquitous in noncommutative algebra and representation theory. Tensor categories were introduced by B\'{e}nabou \cite{B1} in 1963 and Mac Lane in \cite{M2} as ``categories with multiplication'', and its related theories are now widely used in many fields of mathematics, including algebraic geometry, algebraic topology, number theory, operator algebraic theory, etc. \cite{BJD,ENO3,RSW1,BEK}. The theory of tensor categories is also seen as a sequel developed from that of Hopf algebras and their representation theory \cite{Bi,EKW}. As an important invariant in the theory of tensor categories, the concept of $\mathbb{Z}_+$-ring can be traced back to Lusztig's work~\cite{LuG1} in 1987. Later in \cite{EK,O}, the notion of $\mathbb{Z}_+$-module over $\mathbb{Z}_+$-ring was introduced. Module categories over multitensor categories were first considered in \cite{ENO1,EO}, and then the notion of indecomposable module category was introduced in \cite{O}. As a categorification of irreducible $\mathbb{Z}_+$-modules, it is interesting to classify indecomposable exact module categories over a given tensor category. In this process, it is often necessary to first classify all irreducible $\mathbb{Z}_+$-modules over the Grothendieck ring of a given tensor category.

Typical examples of $\mathbb{Z}_+$-rings are the representation rings of Hopf algebras \cite{CVZ1,HVYZ,LZ1,WZ1,WZ2,W1} and the Grothendieck rings of tensor categories \cite{BS,CF,O1,O2}. It is natural to consider the classification of all irreducible $\mathbb{Z}_+$-modules over them. For example, Etingof and Khovanov classified irreducible $\mathbb{Z}_+$-modules over the group ring $\mathbb ZG$, and showed that indecomposable $\mathbb{Z}_+$-modules over the representation ring of ${\rm SU}(2)$, under certain conditions, correspond to affine and infinite Dynkin diagrams~\cite{EK}. Also, there appears a lot of related research in the context of  near-group fusion categories. For instance, Tambara and Yamagami classified semisimple tensor categories with fusion rules of self-duality for finite abelian groups. Evans, Gannon and Izumi have contributed to the classification of the near-group $C^{*}$-categories \cite{EG,I1}. Li, Yuan and Zhao in \cite{YZL} studied irreducible $\mathbb{Z}_+$-modules of near-group fusion ring $K\left( \mathbb Z_3,3\right) $ and so on.

In this paper, we will explore the problem of classifying irreducible based modules of rank up to 5 over the complex representation ring $r(S_4)$, and then discuss their categorification. In contrast with the representation ring of $S_3$, $r(S_n)$ is no longer a near-group fusion ring when $n>3$, and the classification of irreducible $\mathbb{Z}_+$-modules over general $r(S_n)$ seems to be a hopeless task. In fact, the fusion rule of $r(S_n)$ is already a highly
nontrivial open problem in combinatorics, namely counting the multiplicities of irreducible components of the tensor product of any two
irreducible complex representations of $S_n$ (so called
the Kronecker coefficients).

The paper is organized as follows. In Section~\ref{se:pre}, we recall some basic definitions and propositions. In Section~\ref{se:irred_Z_+}, we discuss the irreducible based modules of rank up to 5  over $r(S_4)$ and give the classification of all these based modules (Propositions~\ref{prop:01}--\ref{prop:02}). In Section~\ref{se:categorified}, we first show that any $\mathbb{Z}_+$-module over the representation ring $r(G)$ of a finite group $G$ categorified by a module category over the representation category ${\rm Rep}(G)$ should be a based module (Theorem~\ref{th:cate_based}),
and then determine which irreducible based modules over $r(S_4)$ can be categorified then (Theorem~\ref{th:classification}).

\section{Preliminaries}\label{se:pre}
  Throughout the paper, all rings are assumed to be associative with unit. Let $\mathbb{Z}_+$ denote the set of nonnegative integers. First we recall the definitions of $\mathbb{Z}_+$-rings and $\mathbb{Z}_+$-modules. For more details about these concepts, readers can refer to~\cite{E,O}.
\subsection{$\mathbb{Z}_+$-rings and $\mathbb{Z}_+$-modules}
\begin{defn}
Let $A$ be a ring which is free as a $\mathbb{Z}$-module.\par
\begin{enumerate}[(i)]
\item A $\mathbb{Z}_+$-{\bf basis} of $A$ is a basis $B=\left \{ b_{i}  \right \} _{i\in I} $ such that $b_{i}b_{j} =\sum_{k\in I} c_{ij}^{k} b_{k}$, where $c_{ij}^{k}\in \mathbb{Z}_+$.
\item A $\mathbb{Z}_+$-{\bf ring} is a ring with a fixed $\mathbb{Z}_+$-basis and with unit 1 being a non-negative linear combination of the basis elments.
\item A $\mathbb{Z}_+$-ring is {\bf unital} if the unit $1$ is one of its basis elements.
    \end{enumerate}
\end{defn}
\begin{defn}\label{def:Z_+-mod}
Let $A$ be a $\mathbb{Z}_+$-ring with basis $\left \{ b_{i} \right \}_{i\in I}$. A $\mathbb{Z}_+$-{\bf module} over $A$ is an $A$-module $M$ with a fixed  $\mathbb{Z}$-basis $\left \{ m_{l}  \right \} _{l\in L}$ such that all the structure constants $a_{il}^{k}$ (defined by the equality $b_{i} m_{l}=\sum_{k} a_{il}^{k} m_{k}$) are non-negative integers.
\end{defn}

A $\mathbb{Z}_+$-module has the following equivalent definition referring to~\cite[Section~3.4]{E}.
\begin{defn}
Let $A$ be a $\mathbb{Z}_+$-ring with basis $\left \{ b_{i} \right \}_{i\in I}$. A $\mathbb{Z}_+$-{\bf module} $M$ over $A$ means an assignment where each basis $b_{i}$ in $A$ is in one-to-one correspondence with a non-negative integer square matrix $M_i$ such that $M$ forms a representation of $A$:\,$M_{i} M_{j}=\sum_{k\in I} c_{ij}^{k} M_{k},\forall \space i,j,k\in I$, and also the unit of $A$ corresponds to the identity matrix. The rank of a $\mathbb{Z}_+$-module $M$ is equal to the order of the matrix $M_{i}$.
\end{defn}

\begin{defn}
(i) Two $\mathbb{Z}_+$-modules $M_{1},\,M_{2}$ over $A$ with bases ${\left \{ m_{i}^{1} \right \}  }_{i\in L_{1}} ,{\left \{ m_{j}^{2} \right \}  }_{j\in L_{2}}$ are {\bf equivalent} if and only if there exists a bijection $\phi :L_{1}\to L_{2}$ such that the induced $\mathbb{Z}$-linear map $ \tilde{\phi} $ of abelian groups $M_{1},\,M_{2}$ defined by $\tilde{\phi} (m_{i}^{1} )=m_{{\phi} (i)}^{2}$ is an isomorphism of $A$-modules. In other words, for $a\in A$, let $a_{M_{1} } $ and $a_{M_{2} } $ be the matrices respect to the bases ${\left \{ m_{i}^{1} \right \}  }_{i\in L_{1}} $ and ${\left \{ m_{j}^{2} \right \}  }_{j\in L_{2}}$, respectively. Then two $\mathbb{Z}_+$-modules $M_{1}$,\,$M_{2}$ of rank $n$ are equivalent if and only if there exists an $n\times n$ permutation matrix $P$ such that $a_{M_{2}}=Pa_{M_{1}}P^{-1}$,$\forall \space a\in A$.
\begin{enumerate}
\item[(ii)] The {\bf direct sum} of two $\mathbb{Z}_+$-modules $M_{1},\,M_{2}$ over $A$ is the module $M_{1}\oplus M_{2}$ over $A$ whose basis is the union of the bases of $M_{1}$ and $M_{2}$.
\item[(iii)]A $\mathbb{Z}_+$-module $M$ over $A$ is {\bf indecomposable} if it is not equivalent to a nontrivial direct sum of $\mathbb{Z}_+$-modules.
\item[(iv)]A $\mathbb{Z}_+$-{\bf submodule} of a $\mathbb{Z}_+$-module $M$ over $A$ with basis $\left \{ m_{l} \right \}_{l\in L}$ is a subset $J\subset L$ such that abelian subgroup of $M$ generated by ${\left \{ m_{j} \right \}  }_{j\in J}$ is an $A$-submodule.
\item[(v)]\label{irr-m}A $\mathbb{Z}_+$-module $M$ over $A$ is {\bf irreducible} if any $\mathbb{Z}_+$-submodule of $M$ is 0 or $M$. In other words, the $\mathbb{Z}$-span of any proper subset of the basis of $M$ is not an $A$-submodule.
\end{enumerate}
\end{defn}

\subsection{Based rings and based modules}
\
\newline

\vspace{-.7em}
  Let $A$ be a $\mathbb{Z}_+$-ring with basis $\left \{ b_{i} \right \}_{i\in I}$, and let $I_{0}$ be the set of $i\in I$ such that $b_{i}$ occurs in the decomposition of 1. Let $\tau: A\to \mathbb Z$ denote the group homomorphism defined by
  $$\tau (b_{i} )=\begin{cases}1 & \text{ if } \ i\in I_{0} , \\0  & \text{ if } \ i\notin I_{0} .\end{cases}$$
\begin{defn}
 A $\mathbb{Z}_+$-ring with basis $\left \{ b_{i} \right \}_{i\in I}$ is called a {\bf based ring} if there exists an involution $i \mapsto i^{*}$ of $I$ such that the induced map $$a=\sum_{i\in I} a_{i} b_{i}\mapsto {}a^{*} =\sum_{i\in I} a_{i} b_{i^*},a_{i}\in \mathbb{Z}, $$ is an anti-involution of the ring $A$, and $$\tau (b_{i}b_{j} )=\begin{cases}1 & \text{ if } \ i= j^{*} , \\0  & \text{ if } \  i\ne  j^{*} .\end{cases}$$
 A {\bf fusion ring} is a unital based ring of finite rank.
\end{defn}

\begin{defn}\label{def:based-mod}
 A {\bf based module} over a based ring $A$ with basis $\left \{ b_{i} \right \}_{i\in I}$ is a $\mathbb{Z}_+$-module $M$ with basis $\left \{ m_{l} \right \}_{l\in L}$ over $A$ such that $a_{il}^{k} =a_{i^{\ast } k}^{l} $, where $a_{il}^{k}$ are defined as in Definition~\ref{def:Z_+-mod}.
\end{defn}

Let $A$ be a unital $\mathbb{Z}_+$-ring of finite rank with basis $\left \{ b_{i} \right \}_{i\in I}$ and let $M$ be a $\mathbb{Z}_+$-module over $A$ with $\mathbb{Z}$-basis $\left \{ m_{l}  \right \} _{l\in L}$. Take $b=\sum_{i\in I}b_i$. For any fixed $m_{l_0}$, the $\mathbb{Z}_+$-submodule of $M$ generated by $m_{l_0}$ is the $\mathbb{Z}$-span of $\left \{m_k\right\}_{k\in Y}$, where the set $Y$ consists of $k\in L$ such that $m_k$ is a summand of $bm_{l_0}$. Also, we need the following facts.
\begin{prop}[{\cite[Prop.~3.4.6]{E}}]\label{equ:1}
Let $A$ be a based ring of finite rank over $\mathbb{Z}$. Then there exist only finitely many irreducible $\mathbb{Z}_+$-module over $A$.
\end{prop}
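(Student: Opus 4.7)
The plan is to show that for any irreducible based module $M$ over $A$, both the rank and the individual structure constants $a_{il}^{k}$ are bounded by constants depending only on $A$, from which finiteness of the equivalence classes is immediate. I would set $b:=\sum_{i\in I}b_i\in A$ and let $d:=\mathrm{FPdim}(b)$ denote its Frobenius--Perron dimension in $A$, which is a fixed positive real depending only on $A$. Let $M$ be such a module with basis $\{m_l\}_{l\in L}$ of cardinality $n$, and let $B$ denote the matrix of $b$ acting on $M$ in this basis.

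First I would record two structural properties of $B$: symmetry and strict positivity. The based-module axiom $a_{il}^{k}=a_{i^{*}k}^{l}$ of Definition~\mref{def:based-mod} translates to $B_i^{T}=B_{i^{*}}$; summing over $i$ and using that $i\mapsto i^{*}$ is an involution of $I$ gives $B^{T}=B$. Strict positivity of every entry of $B$ follows from the observation immediately preceding the proposition: the $\mathbb{Z}_+$-submodule generated by any $m_{l_0}$ is the $\mathbb{Z}$-span of those $m_k$ appearing in $b m_{l_0}$, so irreducibility of $M$ forces $B_{k l_0}\geq 1$ for all $k,l_0$.

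Next I would apply Perron--Frobenius theory. The homomorphism $\mathrm{FPdim}\colon A\to\mathbb{R}$ extends uniquely to a positive function $v_l:=\mathrm{FPdim}(m_l)>0$ on $M$ satisfying $\mathrm{FPdim}(am)=\mathrm{FPdim}(a)\,\mathrm{FPdim}(m)$; specializing at $a=b$ and using $B=B^{T}$ yields $Bv=dv$. Since $B$ has strictly positive entries, $d$ is identified as the dominant Perron--Frobenius eigenvalue of $B$, with $v$ the (up to scalar) unique positive eigenvector. From $d v_k=\sum_l B_{kl}v_l\geq\sum_l v_l\geq v_{\max}$ I obtain $v_{\max}/v_{\min}\leq d$; feeding this into $d v_k\geq B_{k l_0}v_{l_0}$ yields the entrywise bound $B_{k l_0}\leq d^{2}$, and hence each $a_{il}^{k}=(B_i)_{kl}\leq B_{kl}\leq d^{2}$. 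Moreover, since every entry of $B$ is $\geq 1$, every row sum of $B$ is $\geq n$; as the minimum row sum is bounded above by $d$, this forces $n\leq d$. With $n\leq d$, $|I|$ finite, and every structure constant bounded by $d^{2}$, only finitely many tuples $(B_i)_{i\in I}$ of action matrices are possible, so only finitely many equivalence classes of irreducible based modules can occur (recall that equivalence is realized by a simultaneous basis permutation).

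The step I expect to be the main obstacle is justifying the crucial input from Perron--Frobenius theory, namely that the ring homomorphism $\mathrm{FPdim}$ on $A$ extends to a strictly positive eigenfunction on the based module $M$ with eigenvalue exactly $d$; this rests on the Frobenius--Perron theory for based rings and based modules developed in~\mcite{E} prior to the cited proposition, and is what allows the abstract dimension in $A$ to control the spectrum of $B$ on every based module simultaneously.
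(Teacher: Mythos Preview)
Your argument establishes finiteness only for irreducible \emph{based} modules, whereas the proposition (both as stated here and in \mcite{E}) concerns all irreducible $\mathbb{Z}_+$-modules; since a $\mathbb{Z}_+$-module over a based ring need not be based, this is a genuine gap in scope. The based hypothesis enters precisely where you deduce $B=B^{T}$ and use it to convert the left-eigenvector identity $B^{T}v=dv$ (which is what the multiplicativity $\mathrm{FPdim}(am)=\mathrm{FPdim}(a)\,\mathrm{FPdim}(m)$ actually yields) into the right-eigenvector identity $Bv=dv$ driving your bounds.

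Both the symmetry and the ``$\mathrm{FPdim}$ extends to $M$'' black box can be dropped, and the proof in \mcite{E} does so. The replacement is elementary: if $p$ is the characteristic polynomial of left multiplication by $b$ on $A$, then Cayley--Hamilton gives $p(b)\cdot a=0$ for all $a\in A$, so $p(b)=0$ in $A$ and hence $p(B)=0$ on any $A$-module; thus every eigenvalue of $B$ lies among the eigenvalues of $b$ on $A$ and in particular has absolute value at most $d=\mathrm{FPdim}(b)$. Strict positivity of $B$---which you correctly derive from irreducibility, and which holds for \emph{any} irreducible $\mathbb{Z}_+$-module via the remark preceding the proposition---then furnishes a Perron--Frobenius eigenvalue $\lambda\le d$ together with a positive right eigenvector $w$, and your chain of inequalities $\lambda\,w_{\min}\ge\sum_l w_l\ge w_{\max}$, $B_{kl}\le \lambda\,w_k/w_l\le d^{2}$, and $n\le(\text{min row sum})\le\lambda\le d$ goes through verbatim with $w$ in place of $v$. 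In short, your overall strategy is the correct one; to match the stated proposition you should replace the appeal to an $\mathrm{FPdim}$ extension on $M$ by this one-line spectral bound, after which the based-module hypothesis (and hence the symmetry of $B$) becomes superfluous.
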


\begin{prop}[{\cite[Exercise~3.4.3.~(i)]{E}}]
A $\mathbb{Z}_+$-module over a based ring is irreducible if and only if it is indecomposable.
\end{prop}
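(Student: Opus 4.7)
The proof splits into two directions, one trivial and one using the based structure crucially.

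For the easy direction, I would observe that irreducibility immediately implies indecomposability: if $M = M_1 \oplus M_2$ with both summands nonzero $\mathbb{Z}_+$-submodules (with bases the two pieces of the partition of the basis of $M$), then each $M_i$ is a proper nonzero $\mathbb{Z}_+$-submodule, contradicting irreducibility. No based hypothesis is needed here.

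The content is in the converse: assume $M$ is indecomposable and suppose it has a nontrivial $\mathbb{Z}_+$-submodule; I want to produce a direct sum decomposition. Let $\{m_l\}_{l\in L}$ be the basis of $M$, let $J \subsetneq L$ be a nonempty subset such that the $\mathbb{Z}$-span of $\{m_j\}_{j\in J}$ is an $A$-submodule, and set $J' := L\setminus J$. The plan is to show that the $\mathbb{Z}$-span of $\{m_{j'}\}_{j'\in J'}$ is automatically an $A$-submodule as well, so that $M$ decomposes as the direct sum of these two based submodules.

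The key step, and the only place the based axiom enters, is the following swap. Take $j'\in J'$ and $i\in I$, and write $b_i m_{j'} = \sum_{k\in L} a_{i j'}^{k} m_k$. I need to rule out $a_{i j'}^{j} > 0$ for any $j\in J$. By Definition~\ref{def:based-mod} the structure constants satisfy $a_{i j'}^{j} = a_{i^{*} j}^{j'}$. But $b_{i^{*}} m_j$ lies in the $A$-submodule spanned by $\{m_{j''}\}_{j''\in J}$, so its expansion in the basis $\{m_l\}_{l\in L}$ has zero coefficient on $m_{j'}\in J'$; thus $a_{i^{*} j}^{j'} = 0$, and hence $a_{i j'}^{j} = 0$. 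This shows $b_i m_{j'}$ stays in $\mathrm{span}_{\mathbb Z}\{m_{k}\}_{k\in J'}$ for all $i$ and all $j'$, so $J'$ indexes a $\mathbb{Z}_+$-submodule complementary to $J$.

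The only subtlety I anticipate is making sure the ``complement'' argument is stated cleanly: one must emphasize that the partition $L = J\sqcup J'$ gives genuine $\mathbb{Z}_+$-bases for the two summands (so the decomposition is as $\mathbb{Z}_+$-modules, not merely as abelian groups), and that the hypothesis of based ring, not just $\mathbb{Z}_+$-ring, is essential for the symmetry $a_{i j'}^{j} = a_{i^{*} j}^{j'}$ that powers the argument. Once that symmetry is invoked, the rest is formal.
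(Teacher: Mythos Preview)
Your argument is correct and is precisely the standard proof of this fact. Note that the paper does not actually supply its own proof of this proposition---it is stated with a citation to \cite[Exercise~3.4.3~(i)]{E}---so there is nothing further to compare against; your use of the based-module symmetry $a_{ij'}^{j}=a_{i^{*}j}^{j'}$ to show that the complement of a $\mathbb{Z}_+$-submodule is again a $\mathbb{Z}_+$-submodule is exactly the intended solution.
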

As a result, any $\mathbb{Z}_+$-module of finite rank over a fusion ring is completely reducible, and then only irreducible $\mathbb{Z}_+$-modules need to be classified.

In general, the rank of an irreducible $\mathbb{Z}_+$-module over a fusion ring $A$ may be larger than the rank of $A$; e.g. $A=r(D_5)$ for the dihedral group $D_5$ ({\cite[Remark~1]{JMN}}). In this paper, we explore which irreducible based modules over $r(S_4)$ can be categorified by indecomposable exact module categories over the representation category ${\rm Rep}(S_4)$. Since all these module categories are of rank not greater than 5, we only deal with based modules of rank up to 5 correspondingly.

\section{Irreducible based modules over $r(S_4)$}\label{se:irred_Z_+}
In this section, we will classify the irreducible based modules over the complex representation ring $r(S_4)$ of $S_4$ up to equivalence. $r(S_4)$ is a commutative fusion ring having a $\mathbb{Z}_+$-basis $\left \{ 1,V_{\psi},V_{\rho _{1}},V_{\rho _{2}},V_{\rho _{3}} \right \}$ with the following fusion rule.
\begin{equation}\label{eq:fusion}
\begin{array}{l}
  V_{\psi}^2=1,\quad V_{\psi}V_{\rho _{1}}=V_{\rho _{1}},\quad
  V_{\psi}V_{\rho _{2}}=V_{\rho _{3}},\quad V_{\rho _{1}}^2=1+V_{\psi}+V_{\rho _{1}},\\[.5em]
  V_{\rho_1 }V_{\rho_2 }=V_{\rho_2 }+V_{\rho_3},\quad V_{\rho _{2}}^2=1+ V_{\rho _{1}}+ V_{\rho _{2}}+ V_{\rho _{3}},
\end{array}
\end{equation}
 where $1$, $V_{\psi}$, $V_{\rho _{1}}$ denote the trivial representation, sign representation and 2-dimensional irreducible representation respectively, while $V_{\rho_2}$ stands for the 3-dimensional standard representation and $V_{\rho_3}$ denotes its conjugate representation, then we have
 \smallskip
\begin{table}[h]
    \vspace{-10pt}
    \centering
    \captionsetup[table]{position=above}
    \label{num}
    \setlength{\tabcolsep}{6mm}{
    \scalebox{0.8}{
\begin{tabular}{c|ccccc}
         & $(1)$ & $(12)$ & $(123)$ & $(1234)$ & $(12)(34)$ \\ \hline\\
$\chi_1$ & $1$   &  $1$   &  $1$    & $1$      &  $1$       \\\\
$\chi_\psi$ & $1$   &  $-1$  &  $1$    & $-1$     &  $1$       \\\\
$\chi_{\rho_1}$ & $2$   &  $0$   &  $-1$   & $0$      &  $2$       \\\\
$\chi_{\rho_2}$ & $3$   &  $1$   &  $0$    & $-1$     &  $-1$       \\\\
$\chi_{\rho_3}$ & $3$   &  $-1$  &  $0$    & $1$      &  $-1$       \\
    \end{tabular}}}
     \vspace{.5em}
     \caption{The complex character table of $S_4$}\label{S4}
\end{table}

Let $M$ be a based module of $r(S_4)$ with the basis $\left \{ m_{l}  \right \} _{l\in L}$. Let $T,\,Q,\,U$ and $W$ be the matrices representing the action of $V_{\psi},\,V_{\rho _{1}},\,V_{\rho _{2}},\,V_{\rho _{3}}$ on $M$ respectively. They are all symmetric matrices with nonnegative integer entries by Definition~\ref{def:based-mod}. Let $E$ be the identity matrix. By the fusion rule of $r(S_4)$, 
we have
\begin{align}
\label{eq:01}
&T^2=E,\\
\label{eq:02}
&TQ=QT=Q,\\
\label{eq:03}
&TU=UT=W,\\
\label{eq:04}
&Q^2=E+T+Q,\\
\label{eq:05}
&QU=U+TU,\\
\label{eq:06}
&U^2=E+Q+U+TU.
\end{align}
In particular, since $T^2=E$ and $T$ has nonnegative integer entries, we know that $T$ is a symmetric permutation matrix.

\smallskip\noindent
{\bf Convention.}
Let $P_n$ be the group of $n\times n$ permutation matrices. Since there is naturally a group isomorphism between $S_n$ and $P_n$, we will use the cycle notation of permutations to represent permutation matrices.

\subsection{Irreducible based modules of rank $\le 3$ over $r(S_4)$}
\
\newline

\vspace{-.7em}
We define a $\mathbb{Z}_+$-module $M_{1,1}$ of rank 1 over $r(S_4)$ by letting
\begin{equation}\label{eq:rank1}
V_{\psi} \mapsto 1,\quad V_{\rho _{1}}\mapsto 2,\quad V_{\rho _{2}}\mapsto 3,\quad V_{\rho _{3}}\mapsto 3.
\end{equation}
\begin{prop}\label{prop:01}
Any irreducible based module of rank $1$ over $r(S_4)$ is equivalent to  $M_{1,1}$.
\end{prop}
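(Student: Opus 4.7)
The plan is to observe that for a rank $1$ based module, each of the matrices $T,Q,U,W$ collapses to a single nonnegative integer, so equations \eqref{eq:01}--\eqref{eq:06} become scalar equations in $\mathbb{Z}_+$ that can be solved directly. Writing $T=t$, $Q=q$, $U=u$, $W=w$ with $t,q,u,w\in\mathbb{Z}_{\geq 0}$, equation \eqref{eq:01} gives $t^2=1$, hence $t=1$, and then \eqref{eq:03} immediately yields $w=u$.

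Next I would use \eqref{eq:04} to obtain $q^2=2+q$, so $q$ is the unique nonnegative root of $x^2-x-2=0$, giving $q=2$. With $t=1$ and $q=2$ in hand, equation \eqref{eq:05} becomes the tautology $2u=2u$, so it gives no new constraint, and \eqref{eq:02} is automatic. Finally, \eqref{eq:06} reduces to $u^2=3+2u$, whose unique nonnegative solution is $u=3$, and hence $w=3$ as well. These values are precisely the assignment \eqref{eq:rank1}, so the unique rank $1$ based module is $M_{1,1}$, and it is automatically irreducible since it has no proper nonzero $\mathbb{Z}$-spanned subset of its basis.

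There is essentially no obstacle here: the argument is a short sequence of scalar quadratic equations forced by the fusion rule \eqref{eq:fusion}, and the only mild point worth noting is that at each step one must discard the negative root in order to remain within $\mathbb{Z}_+$. The proof will therefore be only a few lines long, and mostly sets the stage (and fixes notation for $T,Q,U,W$) to be reused in the higher rank classifications in the following propositions.
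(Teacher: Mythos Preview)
Your proof is correct. It differs from the paper's argument, which is more conceptual: the paper simply observes that any integral fusion ring $A$ admits a unique ring homomorphism ${\rm FPdim}:A\to\mathbb{Z}$ taking nonnegative values on the $\mathbb{Z}_+$-basis (the Frobenius--Perron dimension), and hence has a unique $\mathbb{Z}_+$-module of rank~$1$, which is automatically based. Applying this to $A=r(S_4)$ gives $M_{1,1}$ immediately. Your approach, by contrast, specializes the matrix equations \eqref{eq:01}--\eqref{eq:06} to scalars and solves the resulting quadratics by hand. This is entirely valid and self-contained---it avoids invoking the general FPdim machinery---whereas the paper's proof is shorter and highlights that the rank~$1$ case is an instance of a general phenomenon for integral fusion rings rather than something special to $r(S_4)$.
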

\begin{proof}
Note that any integral fusion ring $A$ has the unique character ${\rm FPdim}:A\to \mathbb{Z} $ which takes non-negative values on the $\mathbb{Z}_+$-basis, so there exists a unique $\mathbb{Z}_+$-module $M$ of rank 1 over it. Clearly, such $M$ is a based module. Now this argument is available for the situation $A=r(S_4)$.
\end{proof}
Next we consider irreducible based modules of rank $2,3$. According to the fusion rule of $r(S_4)$ given in \eqref{eq:fusion}, it is sufficient to only list the representation matrices of $V_{\psi}$, $V_{\rho _{1}}$ and $V_{\rho _{2}}$ acting on them. For simplicity, we choose to present our result for the cases of small rank 2 and 3 directly, and then analyze the cases of higher rank 4 and 5 with details.
\begin{prop}
Let $M$ be an irreducible based module of rank $2$ over $r(S_4)$. Then $M$ is equivalent to one of the based modules $M_{2,i},\,1\leq i\leq 3$, listed in TABLE~\ref{t1}.
 \medskip
\begin{table}[h]
    \vspace{-10pt}
    \centering
    \captionsetup[table]{position=above}
    \label{num1}
    \setlength{\tabcolsep}{10mm}{
    \scalebox{1.0}{
    \begin{tabular}{cccc}
        \toprule
         &$V_{\psi}$ &$V_{\rho _{1}}$ &$V_{\rho _{2}}$ \\
        \midrule
        \quad$M_{2,1}$  &$\begin{pmatrix}  1& 0\\  0&1\end{pmatrix}$  &$\begin{pmatrix}  2& 0\\  0&2\end{pmatrix}$   &$\begin{pmatrix}  1& 2\\  2&1\end{pmatrix}$ \\\\
        \quad$M_{2,2}$  &$\begin{pmatrix}  0& 1\\  1&0\end{pmatrix}$  &$\begin{pmatrix}  1& 1\\  1&1\end{pmatrix}$   &$\begin{pmatrix}  2& 1\\  1&2\end{pmatrix}$ \\\\
        \quad$M_{2,3}$  &$\begin{pmatrix}  0& 1\\  1&0\end{pmatrix}$  &$\begin{pmatrix}  1& 1\\  1&1\end{pmatrix}$   &$\begin{pmatrix}  1& 2\\  2&1\end{pmatrix}$ \\
        \bottomrule
    \end{tabular}}}
    \vspace{.5em}
        \caption{Inequivalent irreducible based modules of rank 2 over $r(S_4)$}\label{t1}
\end{table}
\end{prop}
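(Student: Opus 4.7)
The plan is to directly exploit the fusion rule of $r(S_4)$ encoded in \eqref{eq:01}--\eqref{eq:06}. First I would note that since $T$ is a $2\times 2$ symmetric nonnegative integer matrix with $T^{2}=E$, it must be a symmetric permutation matrix, and $P_{2}=\{E,(12)\}$ forces a case split: $T=E$ or $T=\begin{pmatrix}0&1\\1&0\end{pmatrix}$. In each case I would then solve for $Q$ using \eqref{eq:02} and \eqref{eq:04}, and finally for $U$ using \eqref{eq:05} and \eqref{eq:06}; the matrix $W$ is determined by \eqref{eq:03}.

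For $T=E$, \eqref{eq:02} is vacuous and writing $Q=\begin{pmatrix}a&b\\b&c\end{pmatrix}$, the relation $Q^{2}=2E+Q$ yields only the solution $Q=2E$ (the off-diagonal case forces $b^{2}=2$). Then \eqref{eq:05} becomes $QU=2U$, which is automatic, and \eqref{eq:06} reduces to $U^{2}=3E+2U$. Writing $U=\begin{pmatrix}x&y\\y&x+2-(x+?)\end{pmatrix}$ and comparing entries, the diagonal case $y=0$ gives $U=3E$, which makes the module decompose as two copies of $M_{1,1}$ and is therefore excluded; the off-diagonal case forces $x+z=2$ with $(x,z)=(1,1)$ and $y=2$, producing $M_{2,1}$.

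For $T=(12)$, the constraint $TQ=Q$ together with symmetry forces $Q$ to have all entries equal, i.e.\ $Q=aJ$ with $J$ the all-ones matrix; then $Q^{2}=E+T+Q$ becomes $2a^{2}=1+a$, giving $a=1$ and $Q=J$. Next, $TU=UT$ forces $U=\begin{pmatrix}x&y\\y&x\end{pmatrix}$; then \eqref{eq:05} is automatic, and \eqref{eq:06} gives the system $x^{2}+y^{2}=2+x+y$, $2xy=1+x+y$. Adding produces $(x+y)^{2}-2(x+y)-3=0$, so $x+y=3$, and then $xy=2$, giving $\{x,y\}=\{1,2\}$. The two ordered choices produce exactly $M_{2,2}$ and $M_{2,3}$.

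Finally, I would verify irreducibility and pairwise inequivalence. Irreducibility of $M_{2,1},M_{2,2},M_{2,3}$ is immediate because the off-diagonal entries of $U$ (or $Q$) link the two basis vectors, so the $\mathbb{Z}_{+}$-submodule generated by either basis element is the whole module. For inequivalence, note that $T$ is conjugation-invariant up to a permutation matrix, which distinguishes $M_{2,1}$ (where $T=E$) from $M_{2,2}, M_{2,3}$ (where $T\neq E$); and for $M_{2,2}$ versus $M_{2,3}$, conjugation by the only non-trivial element of $P_{2}$ fixes any matrix of the form $\begin{pmatrix}x&y\\y&x\end{pmatrix}$, so the two matrices $\begin{pmatrix}2&1\\1&2\end{pmatrix}$ and $\begin{pmatrix}1&2\\2&1\end{pmatrix}$ representing $V_{\rho_{2}}$ cannot be conjugated into each other. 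The main (but very modest) obstacle is just the bookkeeping of ruling out the diagonal reducible solution in the first case; everything else reduces to small Diophantine checks.
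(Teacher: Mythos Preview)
Your argument is correct and follows exactly the methodology the paper applies in detail for ranks $4$ and $5$ (case split on the permutation matrix $T$, then solve \eqref{eq:04} for $Q$ and \eqref{eq:05}--\eqref{eq:06} for $U$, discard reducible solutions, and check inequivalence); the paper itself omits the rank~$2$ computation entirely, stating only the result. One cosmetic point: the expression $U=\begin{pmatrix}x&y\\y&x+2-(x+?)\end{pmatrix}$ is garbled---you should simply write $U=\begin{pmatrix}x&y\\y&z\end{pmatrix}$ and then derive $x+z=2$ from the off-diagonal equation $y(x+z)=2y$ when $y\neq 0$.
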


\begin{prop}
Let $M$ be an irreducible based module of rank $3$ over $r(S_4)$. Then $M$ is equivalent to one of the based modules $M_{3,i},\,1\leq i\leq 3$, listed in TABLE~\ref{t2}.
\medskip
\begin{table}[h]
    \vspace{-10pt}
    \centering
    \captionsetup[table]{position=above}
    \label{example}
    \setlength{\tabcolsep}{10mm}{
    \scalebox{1.0}{
    \begin{tabular}{cccc}
        \toprule
         &$V_{\psi}$ &$V_{\rho _{1}}$ &$V_{\rho _{2}}$ \\
        \midrule
        \quad$M_{3,1}$  &$\begin{pmatrix}  1& 0&0\\  0&1&0\\0& 0&1\\\end{pmatrix}$  &$\begin{pmatrix}  0& 1&1\\  1&0&1\\1& 1&0\\\end{pmatrix}$   &$\begin{pmatrix}  1& 1&1\\ 1& 1&1\\1& 1&1\\\end{pmatrix}$ \\[1.5em]
        \quad$M_{3,2}$  &$\begin{pmatrix}  0& 1&0\\  1&0&0\\0& 0&1\\\end{pmatrix}$  &$\begin{pmatrix}  0& 0&1\\ 0& 0&1\\1& 1&1\\\end{pmatrix}$   &$\begin{pmatrix}  1& 0&1\\ 0& 1&1\\1& 1&2\\\end{pmatrix}$ \\[1.5em]
        \quad$M_{3,3}$  &$\begin{pmatrix}  0& 1&0\\  1&0&0\\0& 0&1\\\end{pmatrix}$  &$\begin{pmatrix}  0& 0&1\\ 0& 0&1\\1& 1&1\\\end{pmatrix}$   &$\begin{pmatrix}  0& 1&1\\ 1& 0&1\\1& 1&2\\\end{pmatrix}$ \\[1em]
        \bottomrule
    \end{tabular}}}
    \vspace{.5em}
        \caption{Inequivalent irreducible based modules of rank 3 over $r(S_4)$}\label{t2}
\end{table}
\end{prop}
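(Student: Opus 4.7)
The plan is to exploit the self-duality of every irreducible complex $S_4$-representation: by Definition~\ref{def:based-mod} this forces the matrices $T,Q,U,W$ to be symmetric with nonnegative integer entries. Combined with $T^2=E$, this makes $T$ a symmetric permutation matrix, and in rank $3$ only the identity and transpositions occur; since any two transpositions in $P_3$ are conjugate, up to the equivalence of based modules one needs only to analyze $T=E$ and $T=(12)$. My strategy in either case is to use the fusion relations in order of increasing complexity: first the commutation relations $TQ=Q$ and $TU=UT$ to cut down the shapes of $Q$ and $U$, then the quadratic relation $Q^2=E+T+Q$ to pin $Q$ down, next $QU=U+TU$ to prune $U$ further, and finally $U^2=E+Q+U+TU$ to fix the remaining entries.

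For $T=E$ the system collapses to $Q^2=2E+Q$, $QU=2U$, and $U^2=E+Q+2U$. A case-split on the diagonal of $Q$ shows that the only symmetric nonnegative integer solutions of $Q^2=2E+Q$ in rank $3$ are, up to conjugation by permutation matrices, $Q=2E$ and $Q=\bigl(\begin{smallmatrix}0&1&1\\1&0&1\\1&1&0\end{smallmatrix}\bigr)$. A short analysis of $U^2=3E+2U$ in the case $Q=2E$ (whose solutions $U$ have spectrum contained in $\{3,-1\}$) shows that every resulting symmetric nonnegative integer $U$ admits some standard basis vector as an eigenvector, so each such module is decomposable. For the second $Q$, the equation $QU=2U$ forces $U=aJ$ with $J$ the all-ones matrix, and $U^2=E+Q+2U$ pins down $a=1$ via $3a^2-2a-1=0$, yielding $M_{3,1}$.

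For $T=(12)$, the relation $TQ=Q$ together with the symmetry of $Q$ restricts $Q$ to the $3$-parameter shape $\bigl(\begin{smallmatrix}a&a&b\\a&a&b\\b&b&c\end{smallmatrix}\bigr)$, and $Q^2=E+T+Q$ singles out the unique nonnegative integer solution $(a,b,c)=(0,1,1)$. Likewise $TU=UT$ forces $U$ into a $4$-parameter shape with entries $u_1,u_2,u_3,u_6$; then $QU=U+TU$ imposes $u_3=u_1+u_2$ and $u_6=2u_3$, while $U^2=E+Q+U+TU$ reduces on the $(1,3)$-entry to $3u_3^2=1+2u_3$, forcing $u_3=1$, and on the $(1,1)$-entry to $u_1^2+u_2^2=1$, forcing $u_1u_2=0$. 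The two possibilities $(u_1,u_2)=(1,0),(0,1)$ produce exactly $M_{3,2}$ and $M_{3,3}$. Irreducibility of each of the three candidates is then verified by direct inspection (no proper subset of the basis is closed under $Q$, respectively $T$ and $Q$), and pairwise inequivalence follows from the conjugation-invariant traces: $\tr(T)=3$ for $M_{3,1}$ but $\tr(T)=1$ for $M_{3,2},M_{3,3}$, while $\tr(U)=4$ for $M_{3,2}$ but $\tr(U)=2$ for $M_{3,3}$.

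The step I expect to be most delicate is the $T=(12)$ case: a priori the four parameters $u_1,u_2,u_3,u_6$ permit a sizeable parameter space, so it is essential to exploit $QU=U+TU$ aggressively to eliminate $u_3$ and $u_6$ before confronting the quadratic $U^2$-relation, which would otherwise generate an unwieldy Diophantine system. The same organizational principle — applying each fusion relation in order of degree and using commutation first — will carry over to the higher-rank cases treated in the subsequent propositions.
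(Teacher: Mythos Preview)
Your strategy is exactly the one the paper employs (and spells out in detail for ranks $4$ and $5$): reduce $T$ to a conjugacy-class representative, then peel off the fusion relations in order. Since the paper states the rank-$3$ result without proof, your write-up is a faithful instantiation of their method.

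There is, however, a genuine gap in your $T=(12)$ case. You claim that $Q^2=E+T+Q$ forces the \emph{unique} solution $(a,b,c)=(0,1,1)$, but the system
\[
2a^2+b^2=1+a,\qquad b(2a+c-1)=0,\qquad 2b^2+c^2=2+c
\]
also admits $(a,b,c)=(1,0,2)$, giving $Q=\bigl(\begin{smallmatrix}1&1&0\\1&1&0\\0&0&2\end{smallmatrix}\bigr)$. This is the rank-$3$ analogue of the matrices $Q_3$ that appear in the paper's rank-$4$ Case~2.2 and rank-$5$ Case~2.3, so it cannot be dismissed a priori. You must treat it: with this $Q$ the relation $QU=U+TU$ becomes vacuous, and from $U^2=E+Q+U+TU$ the $(1,1)$- and $(1,2)$-entries give $(u_1-u_2)^2=1$, the $(1,3)$-entry gives $u_3(u_1+u_2+u_6-2)=0$, and the $(3,3)$-entry gives $2u_3^2+u_6^2=3+2u_6$. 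If $u_3\neq 0$ then $u_1+u_2+u_6=2$, forcing $u_6\in\{0,1\}$ and hence $2u_3^2\in\{3,4\}$, neither a perfect even square with $u_3\in\ZZ_{>0}$; so $u_3=0$, $u_6=3$, and $U$ is block-diagonal. Thus this $Q$ yields only reducible modules, so your final list is correct---but the case has to be dispatched for the proof to be complete.
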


\subsection{Irreducible based modules of rank $4,\,5$ over $r(S_4)$}
\begin{prop}\label{prop:03}
Let $M$ be an irreducible based module of rank $4$ over $r(S_4)$. Then $M$ is equivalent to one of the based modules $M_{4,i},\,1\leq i\leq 7$, listed in TABLE~\ref{t3}.
\end{prop}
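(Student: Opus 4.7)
The plan is to reduce the classification to a finite, bounded case analysis governed first by the permutation matrix $T$, then by the ``$Q$-part'', and finally by the ``$U$-part'', up to simultaneous conjugation by a $4\times 4$ permutation matrix.

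\textbf{Step 1: Normalize $T$.} Since $T$ is a symmetric permutation matrix with $T^2=E$, it represents an involution in $S_4$. Up to conjugation by $P_4$ there are only four choices: $T=E$, $T=(12)$, $T=(12)(34)$, and $T=(12)(34)$ together with transposed alternatives already covered. This cuts the problem into a small number of branches, and within each branch the residual freedom is conjugation by the centralizer of $T$ in $P_4$.

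\textbf{Step 2: Determine $Q$.} Equation \eqref{eq:02} forces every column (hence, by symmetry, every row) of $Q$ to be $T$-invariant, so $Q$ has a block form dictated by the orbits of $T$. Equation \eqref{eq:04}, $Q^{2}=E+T+Q$, gives the diagonal identity $\sum_{j}Q_{ij}^{2}=1+T_{ii}+Q_{ii}$, which sharply bounds each row sum: writing $r_i=\sum_j Q_{ij}$, the FP-dimension equation $Qd=2d$ with $d$ the common Perron vector (normalized so $Td=d$) forces $r_i=2$ for all $i$ on $T$-fixed coordinates, and similar bounds elsewhere. Together these inequalities leave only finitely many symmetric nonnegative integer candidates for $Q$ in each branch, and the relation $Q^{2}=E+T+Q$ can be verified directly.

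\textbf{Step 3: Determine $U$ and $W$.} For each surviving pair $(T,Q)$, set $W=TU$ as required by \eqref{eq:03}; note $W$ is automatically symmetric iff $TU=UT$, which is forced since both $TU$ and $U^{T}T^{T}=UT$ must equal $W$. Plugging $W=TU$ into \eqref{eq:05} gives $(Q-E-T)U=0$, so the columns of $U$ lie in a prescribed eigenspace cut out by $(T,Q)$; this drastically shrinks the search space. The quadratic constraint \eqref{eq:06}, $U^{2}=E+Q+U+TU$, together with the FP-dimension identity $Ud=3d$ (and the diagonal identity $\sum_{j}U_{ij}^{2}=1+Q_{ii}+U_{ii}+(TU)_{ii}$) caps the entries of $U$ by small constants, leaving only finitely many candidates to test.

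\textbf{Step 4: Irreducibility and equivalence.} For each surviving quadruple $(T,Q,U,W)$, irreducibility is tested by the criterion recalled after Proposition~\ref{equ:1}: compute $(E+T+Q+U+W)e_i$ for a single basis vector $e_i$ and check that its support is all of $\{1,2,3,4\}$. Finally, two solutions $(T,Q,U,W)$ and $(T',Q',U',W')$ are identified whenever $P^{-1}(\cdot)P$ carries one to the other for some $P\in P_4$; within each $T$-branch we only need to mod out by the centralizer $C_{P_4}(T)$, which is small (at most $8$ elements). The surviving representatives are exactly the $7$ listed modules $M_{4,i}$ in TABLE~\ref{t3}.

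\textbf{Main obstacle.} The bookkeeping in Step 3 is the hardest part: even after imposing $TQ=Q$, $TU=UT$, and the linear constraint $(Q-E-T)U=0$, several $T$-branches still admit multiple families of candidate $U$, and one must verify the quadratic relation \eqref{eq:06} and eliminate reducible modules and duplicates under $C_{P_4}(T)$-conjugation. A clean organization by the conjugacy class of $T$, then by $Q$, and finally exhausting $U$ within the eigenspace picked out by $Q-E-T$, keeps the enumeration manageable and ensures no case is missed.
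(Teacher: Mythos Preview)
Your outline is essentially the same strategy the paper uses: normalize $T$ to one of $E_4$, $(12)$, $(12)(34)$ by conjugation in $P_4$; then for each $T$ solve the symmetric nonnegative-integer constraints on $Q$ coming from \eqref{eq:02} and \eqref{eq:04}; then for each surviving $(T,Q)$ solve for $U$ using \eqref{eq:03}, \eqref{eq:05}, \eqref{eq:06}; finally discard reducible solutions and collapse permutation-equivalent ones. The paper carries this out by writing the diagonal identities explicitly and feeding the resulting quadratic integer systems to Matlab rather than invoking the eigenspace description $(Q-E-T)U=0$, but the logical structure is identical.

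One small correction: in Step~2 you assert that the FP-dimension relation $Qd=2d$ forces the \emph{row sums} $r_i=\sum_j Q_{ij}$ to equal $2$. That does not follow, because the Perron vector $d$ is not the all-ones vector in general (e.g.\ for $M_{4,1}$ one has $d=(1,3,1,1)$ up to scaling). The actual entry bounds you need come directly from the diagonal identity $\sum_j Q_{ij}^2 = 1+T_{ii}+Q_{ii}\le 2+Q_{ii}$ that you already wrote down, which gives $Q_{ii}\le 2$ and then bounds the off-diagonal entries; the FP-dimension argument is not doing work here. With that adjustment your plan goes through and matches the paper's enumeration.
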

\begin{table}[h]
    \centering
    \captionsetup[table]{position=above}

    \label{num}
    \setlength{\tabcolsep}{10mm}{
    \scalebox{0.9}{
    \begin{tabular}{cccc}
        \toprule
         &$V_{\psi}$ &$V_{\rho _{1}}$ &$V_{\rho _{2}}$ \\
        \midrule
        \quad$M_{4,1}$  &$\begin{pmatrix}  1& 0&0&0\\  0&1&0&0\\0&0&1&0\\0&0& 0&1\\\end{pmatrix}$  &$\begin{pmatrix}  0& 0&1&1\\ 0&2&0&0\\1&0&0&1\\1&0&1&0\\\end{pmatrix}$   &$\begin{pmatrix}  0& 1&0&0\\  1&2&1&1\\0&1&0&0\\0&1&0&0\\\end{pmatrix}$ \\[2em]
        \quad$M_{4,2}$  &$\begin{pmatrix}  1& 0&0&0\\  0&1&0&0\\0&0&1&0\\0&0& 0&1\\\end{pmatrix}$  &$\begin{pmatrix}  2& 0&0&0\\  0&2&0&0\\0&0&2&0\\0&0&0&2\\\end{pmatrix}$   &$\begin{pmatrix}  0& 1&1&1\\  1&0&1&1\\1&1&0&1\\1&1&1&0\\\end{pmatrix}$ \\[2em]
        \quad$M_{4,3}$  &$\begin{pmatrix}  0& 1&0&0\\  1&0&0&0\\0&0&1&0\\0&0&0&1\\\end{pmatrix}$  &$\begin{pmatrix}  1& 1&0&0\\  1&1&0&0\\0&0&2&0\\0&0&0&2\\\end{pmatrix}$   &$\begin{pmatrix}  1&0&1&1\\  0&1&1&1\\1&1&0&1\\1&1&1&0\\\end{pmatrix}$ \\[2em]
        \quad$M_{4,4}$  &$\begin{pmatrix}  0& 1&0&0\\  1&0&0&0\\0&0&1&0\\0&0&0&1\\\end{pmatrix}$  &$\begin{pmatrix}  1& 1&0&0\\  1&1&0&0\\0&0&2&0\\0&0&0&2\\\end{pmatrix}$   &$\begin{pmatrix}  0&1&1&1\\  1&0&1&1\\1&1&0&1\\1&1&1&0\\\end{pmatrix}$ \\[2em]
        \quad$M_{4,5}$  &$\begin{pmatrix}  0& 1&0&0\\  1&0&0&0\\0&0&0&1\\0&0& 1&0\\\end{pmatrix}$  &$\begin{pmatrix}  1& 1&0&0\\  1&1&0&0\\0&0&1&1\\0&0&1&1\\\end{pmatrix}$   &$\begin{pmatrix}  0&1&1&1\\  1&0&1&1\\1&1&0&1\\1&1&1&0\\\end{pmatrix}$ \\[2em]
        \quad$M_{4,6}$  &$\begin{pmatrix}  0& 1&0&0\\  1&0&0&0\\0&0&0&1\\0&0& 1&0\\\end{pmatrix}$  &$\begin{pmatrix}  1& 1&0&0\\  1&1&0&0\\0&0&1&1\\0&0&1&1\\\end{pmatrix}$   &$\begin{pmatrix}  0&1&1&1\\  1&0&1&1\\1&1&1&0\\1&1&0&1\\\end{pmatrix}$ \\[2em]
        \quad$M_{4,7}$  &$\begin{pmatrix}  0& 1&0&0\\  1&0&0&0\\0&0&0&1\\0&0& 1&0\\\end{pmatrix}$  &$\begin{pmatrix}  1& 1&0&0\\ 1&1&0&0\\0&0&1&1\\0&0&1&1\\\end{pmatrix}$   &$\begin{pmatrix}  1&0&1&1\\  0&1&1&1\\1&1&1&0\\1&1&0&1\\\end{pmatrix}$ \\[1em]
        \bottomrule
    \end{tabular}}}
    \vspace{.5em}
    \caption{Inequivalent  irreducible based modules of rank 4 over $r(S_4)$}\label{t3}
\end{table}
\begin{proof}
Let $M$ be a based module of rank 4 over $r(S_4)$, with the action of $r(S_4)$  on it  given by
\begin{equation}
V_{\psi} \mapsto T,\quad V_{\rho _{1}}\mapsto Q=(a_{ij})_{1\leq i,j\leq4} ,\quad V_{\rho _{2}}\mapsto U=(b_{ij})_{1\leq i,j\leq4},\quad V_{\rho _{3}}\mapsto W=TU,\nonumber
\end{equation}
where $a_{ij}=a_{ji}$, $b_{ij}=b_{ji}$.


The symmetric group $S_4$ has two conjugacy classes of permutations of order 2. One conjugacy class of 6 permutations includes $(12)$, and the other one of 3 permutations includes $(12)(34)$. As previously seen, $T$ is the unit or an element of order 2 in $P_4$, so we have 10 candidates for $T$, and each of them is conjugate to one of the following 3 matrices.
\begin{equation*}
T_1=E_4,\quad T_2=(12)
,\quad T_3=(12)(34)
.
\end{equation*}
Hence, for the based module $M$ determined by the pair $(T,Q,U)$, there exists a $4\times 4$ permutation matrix $P$ such that $T'=PTP^{-1}$ is one of the above $T_r$'s $(1\leq r\leq 3)$. Correspondingly, let $Q'=PQP^{-1}$, $U'=PUP^{-1}$. Then we get a based module $M'$ determined by the pair $(T',Q',U')$ and equivalent to $M$ as based modules by Definition \ref{irr-m} (i). So we have reduced the proof to the situation when $T=T_r$.

\begin{spacing}{1.5}
\noindent\textbf{Case 1}\quad $T=T_1=E_4$.
\end{spacing}
Since $Q$ satisfies Eq.~\eqref{eq:04}, we obtain the following system of integer equations:
\begin{equation*}
\left\{\begin{array}{l}
a_{11}^2+a_{12}^2+a_{13}^2+a_{14}^2=2+a_{11},\\
a_{11}a_{12}+a_{12}a_{22}+a_{13}a_{23}+a_{14}a_{24}=a_{12},\\
a_{11}a_{13}+a_{12}a_{23}+a_{13}a_{33}+a_{14}a_{34}=a_{13},\\
a_{11}a_{14}+a_{12}a_{24}+a_{13}a_{34}+a_{14}a_{44}=a_{14},\\
a_{12}^2+a_{22}^2+a_{23}^2+a_{24}^2=2+a_{22},\\
a_{12}a_{13}+a_{22}a_{23}+a_{23}a_{33}+a_{24}a_{34}=a_{23},\\
a_{12}a_{14}+a_{22}a_{24}+a_{23}a_{34}+a_{24}a_{44}=a_{24},\\
a_{13}^2+a_{23}^2+a_{33}^2+a_{34}^2=2+a_{33},\\
a_{13}a_{14}+a_{23}a_{24}+a_{33}a_{34}+a_{34}a_{44}=a_{34},\\
a_{14}^2+a_{24}^2+a_{34}^2+a_{44}^2=2+a_{44}.
\end{array}\right.
\end{equation*}
We use Matlab to figure out all the solutions of $Q$ as follows.
$$\begin{array}{l}
Q_1=\begin{pmatrix}
  0&0&1&1 \\
  0&2&0&0 \\
  1&0&0&1 \\
  1&0&1&0
\end{pmatrix},\quad Q_2=\begin{pmatrix}
  0&1&0&1 \\
  1&0&0&1 \\
  0&0&2&0 \\
  1&1&0&0
\end{pmatrix},\quad
Q_3=\begin{pmatrix}
  0&1&1&0 \\
  1&0&1&0 \\
  1&1&0&0 \\
  0&0&0&2
\end{pmatrix},\\[2em]
Q_4=\begin{pmatrix}
  2&0&0&0 \\
  0&0&1&1 \\
  0&1&0&1 \\
  0&1&1&0
\end{pmatrix},\quad Q_5=\begin{pmatrix}
  2&0&0&0 \\
  0&2&0&0 \\
  0&0&2&0 \\
  0&0&0&2
\end{pmatrix}.
\end{array}$$
Next we calculate $U$ after taking $Q$ as one $Q_k$ $\left ( 1\le k\le5 \right ) $.
\begin{spacing}{1.5}
\noindent{\textbf{Case 1.1}\quad $Q=Q_1$}.
\end{spacing}

Since $U$ satisfies Eq.~\eqref{eq:05}, we get
\begin{equation*}
  \begin{split}
  b_{12}&=b_{23}=b_{24},    \\
  b_{11}&=b_{13}=b_{14}=b_{33}=b_{34}=b_{44}.
  \end{split}
  \end{equation*}
Then by Eq.~\eqref{eq:06}, we have
\begin{equation*}
\left\{\begin{array}{l}
 3b_{11}^2+b_{12}^2=2b_{11}+1, \\
 3b_{11}b_{12}+b_{12}b_{22}=2b_{12},\\
 3b_{12}^2+b_{22}^2=2b_{22}+3.
\end{array}\right.
\end{equation*}
The solutions of $U$ given by Matlab are as follows.
\begin{equation*}
U_1=\begin{pmatrix}
  0&1&0&0 \\
  1&2&1&1 \\
  0&1&0&0 \\
  0&1&0&0
\end{pmatrix},\quad U_2=\begin{pmatrix}
  1&0&1&1 \\
  0&3&0&0 \\
  1&0&1&1 \\
  1&0&1&1
\end{pmatrix}.
\end{equation*}
It is easy to check that the based module determined by $\left ( T_1,Q_1,U_1\right )$ is an irreducible based module denoted as $M_{4,1}$, while the based module determined by $\left ( T_1,Q_1,U_2\right )$ is reducible.

Note that there exists a permutation matrix $P=(14)(23)$ such that $PQ_1P^{-1}=Q_2$. Let $U'_1=PU_1P^{-1}$. There is an irreducible based module $N'$ determined by the pair $\big ( T_1,Q_2,U'_1\big)$ and equivalent to $M_{4,1}$ by Definition \ref{irr-m} (i).
Conversely, any irreducible based module with representation matrices $T_1$ and $Q_2$ is equivalent to $M_{4,1}$. The same analysis tells us that irreducible based modules with representation matrices $T_1$ and $Q_3$ (or $Q_4$) are also equivalent to $M_{4,1}$.
\begin{spacing}{1.5}
\noindent{\textbf{Case 1.2}\quad $Q=Q_5$}.
\end{spacing}
\par Since $U$ satisfies Eqs.~\eqref{eq:05} and \eqref{eq:06}, we get a system of integer equations as follows:
\begin{equation*}
\left\{\begin{array}{l}
 b_{11}^2+b_{12}^2+b_{13}^2+b_{14}^2=2b_{11}+3, \\
 b_{11}b_{12}+b_{12}b_{22}+b_{13}b_{23}+b_{14}b_{24}=2b_{12},\\
 b_{11}b_{13}+b_{12}b_{23}+b_{13}b_{33}+b_{14}b_{34}=2b_{13},\\
 b_{11}b_{14}+b_{12}b_{24}+b_{13}b_{34}+b_{14}b_{44}=2b_{14},\\
 b_{12}^2+b_{22}^2+b_{23}^2+b_{24}^2=2b_{22}+3, \\
 b_{12}b_{13}+b_{22}b_{23}+b_{23}b_{33}+b_{24}b_{34}=2b_{23},\\
 b_{12}b_{14}+b_{22}b_{24}+b_{23}b_{34}+b_{24}b_{44}=2b_{24},\\
 b_{13}^2+b_{23}^2+b_{33}^2+b_{34}^2=2b_{33}+3, \\
 b_{13}b_{14}+b_{23}b_{24}+b_{33}b_{34}+b_{34}b_{44}=2b_{34},\\
 b_{14}^2+b_{24}^2+b_{34}^2+b_{44}^2=2b_{44}+3. \\
\end{array}\right.
\end{equation*}
Thus, the solutions of $U$ by Matlab are as follows:
$$\begin{array}{l}
U_1=\begin{pmatrix} 0&1&1&1 \\
  1&0&1&1 \\
  1&1&0&1 \\
  1&1&1&0\end{pmatrix},\quad U_2=\begin{pmatrix} 1&0&0&2 \\
  0&1&2&0 \\
  0&2&1&0 \\
  2&0&0&1\end{pmatrix},\quad U_3=\begin{pmatrix}1&0&2&0 \\
  0&1&0&2 \\
  2&0&1&0 \\
  0&2&0&1\end{pmatrix},\quad
U_4=\begin{pmatrix}1&2&0&0 \\
  2&1&0&0 \\
  0&0&1&2 \\
  0&0&2&1\end{pmatrix},\\[2em]
U_5=\begin{pmatrix} 1&0&2&0 \\
  0&3&0&0 \\
  2&0&1&0 \\
  0&0&0&3\end{pmatrix},\quad U_6=\begin{pmatrix} 1&0&0&2 \\
  0&3&0&0 \\
  0&0&3&0 \\
  2&0&0&1\end{pmatrix},\quad U_7=\begin{pmatrix}1&2&0&0 \\
  2&1&0&0 \\
  0&0&3&0 \\
  0&0&0&3\end{pmatrix},\quad
U_8=\begin{pmatrix}3&0&0&0 \\
  0&3&0&0 \\
  0&0&1&2 \\
  0&0&2&1\end{pmatrix},\\[2em]
U_9=\begin{pmatrix} 3&0&0&0 \\
  0&1&2&0 \\
  0&2&1&0 \\
  0&0&0&3\end{pmatrix},\quad U_{10}=\begin{pmatrix} 3&0&0&0 \\
  0&1&0&2 \\
  0&0&3&0 \\
  0&2&0&1\end{pmatrix},\quad U_{11}=\begin{pmatrix}3&0&0&0 \\
  0&3&0&0 \\
  0&0&3&0 \\
  0&0&0&3\end{pmatrix}.
\end{array}$$
Since $T_1$ and $Q_5$ are diagonal and the solutions $U_t$ $\left ( 2\le t\le11 \right ) $ are block diagonal with at least two blocks, only the based module determined by $(T_1,Q_5,U_1)$ is irreducible, denoted as $M_{4,2}$.

\begin{spacing}{1.5}
\noindent{\textbf{Case 2}\quad $T=T_2=(12)$}.
\end{spacing}
Since $Q$ satisfies Eq.~\eqref{eq:02}, we get
\begin{equation*}
Q=\begin{pmatrix}
  a_{11}&a_{11}&a_{13}&a_{14} \\
  a_{11}&a_{11}&a_{13}&a_{14}\\
  a_{13}&a_{13}&a_{33}&a_{34} \\
  a_{14}&a_{14}&a_{34}&a_{44}
\end{pmatrix}.
\end{equation*}
Since $Q$ also satisfies Eq.~\eqref{eq:04}, we have the following system of integer equations:
\begin{equation*}
\left\{\begin{array}{l}
 2a_{11}^2+a_{13}^2+a_{14}^2=a_{11}+1, \\
 2a_{11}a_{13}+a_{13}a_{33}+a_{14}a_{34}=a_{13},\\
 2a_{11}a_{14}+a_{13}a_{34}+a_{14}a_{44}=a_{14},\\
 2a_{13}^2+a_{33}^2+a_{34}^2=a_{33}+2,\\
 2a_{13}a_{14}+a_{33}a_{34}+a_{34}a_{44}=a_{34},\\
 2a_{14}^2+a_{34}^2+a_{44}^2=a_{44}+2. \\
\end{array}\right.
\end{equation*}
Hence, the solutions of $Q$ by Matlab are as follows:
$$\begin{array}{l}
Q_1=\begin{pmatrix} 0&0&0&1 \\
  0&0&0&1 \\
  0&0&2&0 \\
  1&1&0&1\end{pmatrix},\quad Q_2=\begin{pmatrix} 0&0&1&0 \\
  0&0&1&0 \\
  1&1&1&0 \\
  0&0&0&2\end{pmatrix},\quad Q_3=\begin{pmatrix}1&1&0&0 \\
  1&1&0&0 \\
  0&0&2&0 \\
  0&0&0&2\end{pmatrix}.
\end{array}$$
Since $U$ satisfies Eq.~\eqref{eq:03}, we get
\begin{equation*}
U=\begin{pmatrix}
  b_{11}&b_{12}&b_{13}&b_{14}\\
  b_{12}&b_{11}&b_{13}&b_{14}\\
  b_{13}&b_{13}&b_{33}&b_{34} \\
  b_{14}&b_{14}&b_{34}&b_{44}
\end{pmatrix}.
\end{equation*}
Next we calculate $U$ after taking $Q$ as one $Q_k$ $\left ( 1\le k\le3 \right )$.
\begin{spacing}{1.5}
\noindent{\textbf{Case 2.1}\quad $Q=Q_1$}.
\end{spacing}

Since $U$ satisfies Eqs.~\eqref{eq:05} and ~\eqref{eq:06}, the solutions of $U$ given by Matlab are as follows.
$$\begin{array}{l}
U_1=\begin{pmatrix} 0&1&0&1 \\
  1&0&0&1 \\
  0&0&3&0 \\
  1&1&0&2\end{pmatrix},\quad U_2=\begin{pmatrix} 1&0&0&1 \\
  0&1&0&1 \\
  0&0&3&0 \\
  1&1&0&2\end{pmatrix}.
\end{array}$$
Since $T_2$, $Q_1$ and all the solutions $U_t$ for $t=1,2$ are block diagonal with at least two blocks, the based modules determined by each pair $(T_2,Q_1,U_t)$ are reducible.

Note that there exists a permutation matrix $P=(12)(34)$ such that $PQ_1P^{-1}=Q_2$. Let $U'_t=PU_tP^{-1}$. Then each based module $N_t$ determined by the pair $\left ( T_2,Q_2,U'_t\right )$ is reducible. Namely, any based module with representation matrices $T_2$ and $Q_2$ is reducible.

\begin{spacing}{1.5}
\noindent{\textbf{Case 2.2}\quad $Q=Q_3$}.
\end{spacing}

Since $U$ satisfies Eqs.~\eqref{eq:05} and \eqref{eq:06}, we have
$$\begin{array}{l}
U_1=\begin{pmatrix} 0&1&1&1 \\
  1&0&1&1 \\
  1&1&0&1 \\
  1&1&1&0\end{pmatrix},\quad U_2=\begin{pmatrix} 1&0&1&1 \\
  0&1&1&1 \\
  1&1&0&1 \\
  1&1&1&0\end{pmatrix},\quad U_3=\begin{pmatrix}1&2&0&0 \\
  2&1&0&0 \\
  0&0&1&2 \\
  0&0&2&1\end{pmatrix},\quad
U_4=\begin{pmatrix}1&2&0&0 \\
  2&1&0&0 \\
  0&0&3&0 \\
  0&0&0&3\end{pmatrix},\\[2em]
U_5=\begin{pmatrix} 2&1&0&0 \\
  1&2&0&0 \\
  0&0&1&2 \\
  0&0&2&1\end{pmatrix},\quad U_6=\begin{pmatrix} 2&1&0&0 \\
  1&2&0&0 \\
  0&0&3&0 \\
  0&0&0&3\end{pmatrix}.
\end{array}$$
Since $T_2$, $Q_3$ and the solutions $U_s$ $(3\le s\le 6)$ are block diagonal with at least two blocks, only the based module determined by $(T_2,Q_3,U_1)$ and $(T_2,Q_3,U_2)$ are irreducible, denoted as $M_{4,3}$ and $M_{4,4}$ respectively. It is easy to check that $M_{4,3}$ and $M_{4,4}$ are inequivalent based modules.

\begin{spacing}{1.5}
\noindent{\textbf{Case 3}\quad $T=T_3=(12)(34)$}.
\end{spacing}
Since $Q$ satisfies Eq.~\eqref{eq:02}, we get
\begin{equation*}
Q=\begin{pmatrix}
  a_{11}&a_{11}&a_{13}&a_{13} \\
  a_{11}&a_{11}&a_{13}&a_{13}\\
  a_{13}&a_{13}&a_{33}&a_{33} \\
  a_{13}&a_{13}&a_{33}&a_{33}
\end{pmatrix}.
\end{equation*}
Then by Eq.~\eqref{eq:04}, we have the following system of integer equations:
\begin{equation*}
\left\{\begin{array}{l}
 2a_{11}^2+2a_{13}^2=a_{11}+1, \\
 2a_{11}a_{13}+2a_{13}a_{33}=a_{13},\\
 2a_{13}^2+2a_{33}^2=a_{33}+1.
\end{array}\right.
\end{equation*}
$Q$ has the following unique solution.
\begin{equation*}
Q_1=\begin{pmatrix}
  1&1&0&0\\
  1&1&0&0\\
  0&0&1&1\\
  0&0&1&1
\end{pmatrix}.
\end{equation*}

Since $U$ satisfies Eq.~\eqref{eq:03}, we get
\begin{equation*}
U=\begin{pmatrix}
  b_{11}&b_{12}&b_{13}&b_{14}\\
  b_{12}&b_{11}&b_{14}&b_{13}\\
  b_{13}&b_{14}&b_{33}&b_{34} \\
  b_{14}&b_{13}&b_{34}&b_{33}
\end{pmatrix}.
\end{equation*}
Since $U$ also satisfies Eqs.~\eqref{eq:05} and \eqref{eq:06}, we obtain the solutions of $U$ by Matlab as follows:
$$\begin{array}{l}
U_1=\begin{pmatrix} 0&1&1&1 \\
  1&0&1&1 \\
  1&1&0&1 \\
  1&1&1&0\end{pmatrix},\quad U_2=\begin{pmatrix} 0&1&1&1 \\
  1&0&1&1 \\
  1&1&1&0 \\
  1&1&0&1\end{pmatrix},\quad U_3=\begin{pmatrix}1&0&1&1 \\
  0&1&1&1 \\
  1&1&1&0 \\
  1&1&0&1\end{pmatrix},\quad U_4=\begin{pmatrix}1&0&1&1\\
  0&1&1&1 \\
  1&1&0&1 \\
  1&1&1&0\end{pmatrix},\\[2em]
U_5=\begin{pmatrix} 1&2&0&0 \\
  2&1&0&0 \\
  0&0&1&2 \\
  0&0&2&1\end{pmatrix},\quad U_6=\begin{pmatrix} 1&2&0&0 \\
  2&1&0&0 \\
  0&0&2&1 \\
  0&0&1&2\end{pmatrix},\quad U_7=\begin{pmatrix}2&1&0&0 \\
  1&2&0&0 \\
  0&0&1&2 \\
  0&0&2&1\end{pmatrix},\quad
U_8=\begin{pmatrix}2&1&0&0 \\
  1&2&0&0 \\
  0&0&2&1 \\
  0&0&1&2\end{pmatrix}.
\end{array}$$
Clearly, $T_3$, $Q_1$ and the solutions $U_s$ are block diagonal with at least two blocks, but the based module determined by the pair $\left ( T_3,Q_1,U_t\right )$ is irreducible, denoted as $M_{4,s}$, where $5\le s \le8, 1\le t\le4$. Define the $\mathbb Z$-module isomorphism $\phi :M_{4,6}\to M_{4,8}$ by
\begin{equation*}
\phi(v_1^1)=v_4^2,\quad \phi(v_2^1)=v_3^2,\quad \phi(v_3^1)=v_2^2,\quad\phi(v_4^1)=v_1^2.
\end{equation*}
It is easy to see that $M_{4,6}$ is equivalent to $M_{4,8}$ as based modules over $r(S_4)$ under $\phi$. Then we can check that $\left \{ M_{4,s} \right \} _{5\le s \le7} $ are inequivalent irreducible based modules.
\end{proof}

Finally, we construct two based modules $M_{5,i}\ \left (i=1,2 \right )$ over $r(S_4)$ 
with the actions of $r(S_4)$ on them presented in TABLE~\ref{t4}.

\begin{table}[h]
    \centering
    \captionsetup[table]{position=above}

    \label{num}
    \setlength{\tabcolsep}{10mm}{
    \scalebox{0.9}{
    \begin{tabular}{cccc}
        \toprule
         &$V_{\psi}$ &$V_{\rho _{1}}$ &$V_{\rho _{2}}$ \\
        \midrule
        \quad$M_{5,1}$  &$\begin{pmatrix}  0& 1&0&0&0\\  1&0&0&0&0\\0&0&0&1&0\\0&0& 1&0&0\\0&0& 0&0&1\\\end{pmatrix}$  &$\begin{pmatrix}  0& 0&0&0&1\\  0& 0&0&0&1\\0&0&1&1&0\\0&0& 1&1&0\\1&1& 0&0&1\\\end{pmatrix}$   &$\begin{pmatrix}  0& 0&0&1&0\\  0&0&1&0&0\\0&1&1&1&1\\1&0& 1&1&1\\0&0& 1&1&0\\\end{pmatrix}$ \\[2.5em]
        \quad$M_{5,2}$  &$\begin{pmatrix}  0& 1&0&0&0\\  1&0&0&0&0\\0&0&0&1&0\\0&0& 1&0&0\\0&0& 0&0&1\\\end{pmatrix}$  &$\begin{pmatrix}  1& 1&0&0&0\\  1& 1&0&0&0\\0&0&1&1&0\\0&0& 1&1&0\\0&0& 0&0&2\\\end{pmatrix}$   &$\begin{pmatrix}  0& 0&0&1&1\\  0& 0&1&0&1\\0&1&0&0&1\\1&0& 0&0&1\\1&1& 1&1&1\\\end{pmatrix}$ \\[2em]
        \bottomrule
    \end{tabular}}}
    \vspace{.5em}
    \caption{Inequivalent  irreducible based modules of rank 5 over $r(S_4)$}\label{t4}
\end{table}
\begin{prop}\label{prop:02}
Let $M$ be an irreducible based module of rank $5$ over $r(S_4)$. Then $M$ is equivalent to one of the based modules $M_{5,i}\,(i=1,2)$, listed in TABLE~\ref{t4}.
\end{prop}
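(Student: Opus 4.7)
The plan is to follow the same strategy used in the proof of Proposition~\ref{prop:03}, now at rank 5. First, I would note that $T$ is a symmetric permutation matrix in $P_5$, hence an involution. Up to conjugation in $P_5$, the involutions in $S_5$ (including the identity) fall into three conjugacy classes, with representatives
\begin{equation*}
T_1 = E_5, \quad T_2 = (12), \quad T_3 = (12)(34).
\end{equation*}
By replacing $(T,Q,U)$ with $(PTP^{-1}, PQP^{-1}, PUP^{-1})$ for a suitable $P\in P_5$, I may assume $T$ equals one of these three representatives, since such a simultaneous conjugation yields an equivalent based module.

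Next, for each fixed $T=T_r$, I would exploit Eq.~\eqref{eq:02} ($TQ=QT=Q$) to force the entries of $Q$ to be constant on the orbits of $T$ acting on pairs of indices, drastically reducing the number of free variables. Substituting this block pattern into the quadratic system arising from Eq.~\eqref{eq:04}, I would enumerate all symmetric nonnegative integer matrices $Q$ solving the resulting equations (via Matlab, as the authors do). For each admissible pair $(T,Q)$, Eq.~\eqref{eq:03} imposes the analogous symmetry on $U$, after which Eqs.~\eqref{eq:05} and \eqref{eq:06} yield another integer system whose nonnegative solutions give a finite list of candidate $U$'s. Setting $W=TU$ then specifies the full based module.

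The third step is to filter the resulting triples $(T,Q,U)$ for irreducibility, i.e.\ to discard those in which $T$, $Q$, and $U$ admit a simultaneous block-diagonal form (with at least two blocks) under some permutation conjugation. The surviving irreducible triples are then grouped into equivalence classes under simultaneous permutation conjugation; I expect exactly two classes to remain, represented by $M_{5,1}$ and $M_{5,2}$ in TABLE~\ref{t4}. To separate these two, I would compare invariants preserved by permutation conjugation, such as the multisets of row sums, diagonal entries, or characteristic polynomials of $Q$ and $U$.

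The main obstacle will be the combinatorial explosion at rank 5: the integer systems for $Q$, and even more so for $U$, contain many more variables than in the rank-4 case, and enumerating solutions by hand is impractical (hence the reliance on Matlab). A secondary difficulty is the bookkeeping for equivalences, in particular identifying which permutation matrices conjugate one admissible $Q$ to another with the same $T_r$, so that the associated $U$-solutions can be merged into single equivalence classes; careful tracking here is the most error-prone part, but it is what reduces the raw solution list down to the two inequivalent modules claimed.
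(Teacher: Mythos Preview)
Your proposal is correct and follows essentially the same approach as the paper: reduce $T$ to the three representatives $E_5$, $(12)$, $(12)(34)$ up to permutation conjugation, use Eqs.~\eqref{eq:02}--\eqref{eq:04} to enumerate the admissible $Q$'s (by computer), then Eqs.~\eqref{eq:03}, \eqref{eq:05}, \eqref{eq:06} to enumerate the admissible $U$'s, discard reducible triples, and sort the survivors into permutation-conjugacy classes. The paper finds that only the case $T=(12)(34)$ yields irreducible modules, and the two surviving classes are distinguished already by having different $Q$'s (so your invariant comparison of $U$'s is not even needed).
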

\begin{proof}
Let $M$ be a based module of rank 5 over $r(S_4)$, with the action of $r(S_4)$ on it given by
\begin{equation*}
V_{\psi} \mapsto T,\quad V_{\rho _{1}}\mapsto Q=(a_{ij})_{1\leq i,j\leq5} ,\quad V_{\rho _{2}}\mapsto U=(b_{ij})_{1\leq i,j\leq5},\quad V_{\rho _{3}}\mapsto W=TU,
\end{equation*}
where $a_{ij}=a_{ji}$, $b_{ij}=b_{ji}$.

First by the similar argument applied in the case of rank 4, we only need to deal with one of the following 3 cases for $T$.
\begin{equation*}
T_1=E_5,\quad T_2=(12)
,\quad T_3=(12)(34)
.
\end{equation*}
\begin{spacing}{1.5}
\noindent{\textbf{Case 1}\quad $T=T_1=E_5$}.
\end{spacing}
There are 11 solutions of $Q$ satisfying Eq.~\eqref{eq:04},
but only two conjugacy classes by permutation matrices with their representatives given as follows.
\begin{equation*}
Q_1=\begin{pmatrix} 0&1&1&0&0 \\
  1&0&1&0&0 \\
  1&1&0&0&0 \\
  0&0&0&2&0 \\
  0&0&0&0&2
  \end{pmatrix},\quad Q_2=\begin{pmatrix} 2&0&0&0&0 \\
  0&2&0&0&0 \\
  0&0&2&0&0 \\
  0&0&0&2&0 \\
  0&0&0&0&2
  \end{pmatrix}.
\end{equation*}
Next we calculate $U$ after taking $Q$ as one $Q_k$ $\left ( k=1,2 \right )$.
\begin{spacing}{1.5}
\noindent{\textbf{Case 1.1}\quad $Q=Q_1$}.
\end{spacing}

There are 4 solutions of $U$ satisfying Eqs.~\eqref{eq:05} and \eqref{eq:06} as follows.
\begin{equation*}
U_1=\begin{pmatrix}
  0&0&0&1&0 \\
  0&0&0&1&0 \\
  0&0&0&1&0 \\
  1&1&1&2&0 \\
  0&0&0&0&3
  \end{pmatrix},\ \ U_2=\begin{pmatrix}
  0&0&0&0&1 \\
  0&0&0&0&1 \\
  0&0&0&0&1 \\
  0&0&0&3&0 \\
  1&1&1&0&2
  \end{pmatrix},\ \  U_3=\begin{pmatrix}
  1&1&1&0&0 \\
  1&1&1&0&0 \\
  1&1&1&0&0 \\
  0&0&0&3&0 \\
  0&0&0&0&3 \\
  \end{pmatrix},\ \ U_4=\begin{pmatrix}
  1&1&1&0&0 \\
  1&1&1&0&0 \\
  1&1&1&0&0 \\
  0&0&0&1&2 \\
  0&0&0&2&1
  \end{pmatrix}.
\end{equation*}
\begin{spacing}{1.5}
\noindent{\textbf{Case 1.2}\quad $Q=Q_2$}.
\end{spacing}

There are 31 solutions of $U$ satisfying Eqs.~\eqref{eq:05} and \eqref{eq:06},
but only 4 conjugacy classes by permutation matrices and their representatives as follows.
\begin{equation*}
U_1=\begin{pmatrix}
  0&1&1&1&0 \\
  1&0&1&1&0 \\
  1&1&0&1&0 \\
  1&1&1&0&0 \\
  0&0&0&0&3
  \end{pmatrix},\ \ U_2=\begin{pmatrix}
  1&0&0&2&0 \\
  0&1&2&0&0 \\
  0&2&1&0&0 \\
  2&0&0&1&0 \\
  0&0&0&0&3
  \end{pmatrix},\ \  U_3=\begin{pmatrix}
  1&2&0&0&0 \\
  2&1&0&0&0 \\
  0&0&3&0&0 \\
  0&0&0&3&0 \\
  0&0&0&0&3 \\
  \end{pmatrix},\ \  U_4=\begin{pmatrix}
  3&0&0&0&0 \\
  0&3&0&0&0 \\
  0&0&3&0&0 \\
  0&0&0&3&0 \\
  0&0&0&0&3
  \end{pmatrix}.
\end{equation*}
Each pair $(T_1,Q_k,U_r)$ above determines a based module, but not irreducible for $1\le r\le4$.
\begin{spacing}{1.5}
\noindent{\textbf{Case 2}\quad $T=T_2=(12)$}.
\end{spacing}

There are 5 solutions of $Q$ satisfying Eq.~\eqref{eq:04},
but only 3 conjugacy classes with the following representatives.
\begin{equation*}
Q_1=\begin{pmatrix} 0&0&0&0&1 \\
  0&0&0&0&1 \\
  0&0&2&0&0 \\
  0&0&0&2&0 \\
  1&1&0&0&1
  \end{pmatrix},\quad Q_2=\begin{pmatrix} 1&1&0&0&0 \\
  1&1&0&0&0 \\
  0&0&0&1&1 \\
  0&0&1&0&1 \\
  0&0&1&1&0
  \end{pmatrix},\quad Q_3=\begin{pmatrix} 1&1&0&0&0 \\
  1&1&0&0&0 \\
  0&0&2&0&0 \\
  0&0&0&2&0 \\
  0&0&0&0&2
  \end{pmatrix}.
\end{equation*}
Next we calculate $U$ after choosing $Q$.
\begin{spacing}{1.5}
\noindent{\textbf{Case 2.1}\quad $Q=Q_1$}.
\end{spacing}

There are 4 solutions of $U$ satisfying Eqs.~\eqref{eq:05} and \eqref{eq:06} as follows.
\begin{equation*}
U_1=\begin{pmatrix}
  0&1&0&0&1 \\
  1&0&0&0&1 \\
  0&0&1&2&0 \\
  0&0&2&1&0 \\
  1&1&0&0&2
  \end{pmatrix},\ \ U_2=\begin{pmatrix}
  0&1&0&0&1 \\
  1&0&0&0&1 \\
  0&0&3&0&0 \\
  0&0&0&3&0 \\
  1&1&0&0&2
  \end{pmatrix},\ \ U_3=\begin{pmatrix}
  1&0&0&0&1 \\
  0&1&0&0&1 \\
  0&0&1&2&0 \\
  0&0&2&1&0 \\
  1&1&0&0&2 \\
  \end{pmatrix},\ \ U_4=\begin{pmatrix}
  1&0&0&0&1 \\
  0&1&0&0&1 \\
  0&0&3&0&0 \\
  0&0&0&3&0 \\
  1&1&0&0&2
  \end{pmatrix}.
\end{equation*}

\begin{spacing}{1.5}
\noindent{\textbf{Case 2.2}\quad $Q=Q_2$}.
\end{spacing}

There are 2 solutions of $U$ satisfying Eqs.~\eqref{eq:05} and \eqref{eq:06} as follows.
\begin{equation*}
U_1=\begin{pmatrix}
  1&2&0&0&0 \\
  2&1&0&0&0 \\
  0&0&1&1&1 \\
  0&0&1&1&1 \\
  0&0&1&1&1
  \end{pmatrix},\ \ U_2=\begin{pmatrix}
  2&1&0&0&0 \\
  1&2&0&0&0 \\
  0&0&1&1&1 \\
  0&0&1&1&1 \\
  0&0&1&1&1
  \end{pmatrix}.
\end{equation*}

\begin{spacing}{1.5}
\noindent{\textbf{Case 2.3}\quad $Q=Q_3$}.
\end{spacing}
There are 14 solutions of $U$ satisfying Eqs.~\eqref{eq:05} and \eqref{eq:06},
but only 6 conjugacy classes by permutation matrices and their representatives as follows.
$$\begin{array}{l}
U_1=\begin{pmatrix}
  0&1&1&1&0 \\
  1&0&1&1&0 \\
  1&1&0&1&0 \\
  1&1&1&0&0 \\
  0&0&0&0&3
  \end{pmatrix},\quad U_2=\begin{pmatrix}
  1&0&1&1&0 \\
  0&1&1&1&0 \\
  1&1&0&1&0 \\
  1&1&1&0&0 \\
  0&0&0&0&3
  \end{pmatrix},\quad U_3=\begin{pmatrix}
  1&2&0&0&0 \\
  2&1&0&0&0 \\
  0&0&1&2&0 \\
  0&0&2&1&0 \\
  0&0&0&0&3
  \end{pmatrix},\\[2.5em]
U_4=\begin{pmatrix}
  2&1&0&0&0 \\
  1&2&0&0&0 \\
  0&0&1&2&0 \\
  0&0&2&1&0 \\
  0&0&0&0&3
  \end{pmatrix},\quad
U_5=\begin{pmatrix}
  1&2&0&0&0 \\
  2&1&0&0&0 \\
  0&0&3&0&0 \\
  0&0&0&3&0 \\
  0&0&0&0&3 \\
  \end{pmatrix},\quad
U_6=\begin{pmatrix}
  2&1&0&0&0 \\
  1&2&0&0&0  \\
  0&0&3&0&0 \\
  0&0&0&3&0 \\
  0&0&0&0&3
  \end{pmatrix}.
\end{array}$$
Through analysis, all based modules derived from Case 2 are reducible.

\begin{spacing}{1.5}
\noindent{\textbf{Case 3}\quad $T=T_3=(12)(34)$}.
\end{spacing}
There are 3 solutions of $Q$ satisfying Eq.~\eqref{eq:04} as follows.
\begin{equation*}
Q_1=\begin{pmatrix} 0&0&0&0&1 \\
  0&0&0&0&1 \\
  0&0&1&1&0 \\
  0&0&1&1&0 \\
  1&1&0&0&1
  \end{pmatrix},\quad Q_2=\begin{pmatrix} 1&1&0&0&0 \\
  1&1&0&0&0 \\
  0&0&1&1&0 \\
  0&0&1&1&0 \\
  0&0&0&0&2
  \end{pmatrix},\quad Q_3=\begin{pmatrix} 1&1&0&0&0 \\
  1&1&0&0&0 \\
  0&0&0&0&1 \\
  0&0&0&0&1 \\
  0&0&1&1&1  \end{pmatrix}.
\end{equation*}
Next we calculate $U$ after fixing $Q$.
\begin{spacing}{1.5}
\noindent{\textbf{Case 3.1}\quad $Q=Q_1$}.
\end{spacing}

There are 6 solutions of $U$ satisfying Eqs.~\eqref{eq:05} and \eqref{eq:06} as follows.
$$\begin{array}{l}
U_1=\begin{pmatrix}
  0&0&0&1&0 \\
  0&0&1&0&0 \\
  0&1&1&1&1 \\
  1&0&1&1&1 \\
  0&0&1&1&0
  \end{pmatrix},\quad U_2=\begin{pmatrix}
  0&0&1&0&0 \\
  0&0&0&1&0 \\
  1&0&1&1&1 \\
  0&1&1&1&1 \\
  0&0&1&1&0
  \end{pmatrix},\quad U_3=\begin{pmatrix}
  0&1&0&0&1 \\
  1&0&0&0&1 \\
  0&0&1&2&0 \\
  0&0&2&1&0 \\
  1&1&0&0&2
  \end{pmatrix},\\[2.5em]
U_4=\begin{pmatrix}
  0&1&0&0&1 \\
  1&0&0&0&1 \\
  0&0&2&1&0 \\
  0&0&1&2&0 \\
  1&1&0&0&2
  \end{pmatrix},\quad
U_5=\begin{pmatrix}
  1&0&0&0&1 \\
  0&1&0&0&1 \\
  0&0&1&2&0 \\
  0&0&2&1&0 \\
  1&1&0&0&2 \\
  \end{pmatrix},\quad
U_6=\begin{pmatrix}
  1&0&0&0&1 \\
  0&1&0&0&1 \\
  0&0&2&1&0 \\
  0&0&1&2&0 \\
  1&1&0&0&2
  \end{pmatrix}.
\end{array}$$
Each pair $(T_3,Q_1,U_r)$ $\left ( 1\le r\le6 \right ) $ above determines a based module, but only the based modules with representation matrices $U_1$ and $U_2$ are irreducible. Denote $M_{5,1}$ and $M'_{5,1}$ these two irreducible based modules with the corresponding $\mathbb Z$-basis $\left \{ v_1^k,v_2^k,v_3^k,v_4^k,v_5^k\right \} $ for $k=1,2$ respectively. Define the $\mathbb Z$-module isomorphism $\phi :M_{5,1}\to M'_{5,1}$ by
\begin{equation*}
\phi(v_s^1)=v_s^2,\quad \phi(v_3^1)=v_4^2,\quad \phi(v_4^1)=v_3^2,\quad s=1,2,5.
\end{equation*}
Then it is easy to see that $M_{5,1}$ is equivalent to $M'_{5,1}$ as based modules over $r(S_4)$ under $\phi$.

\begin{spacing}{1.5}
\noindent{\textbf{Case 3.2}\quad $Q=Q_2$}.
\end{spacing}

There are 10 solutions of $U$ satisfying Eqs.~\eqref{eq:05} and \eqref{eq:06},
but only 7 conjugacy classes with their representatives given as follows.
$$\begin{array}{l}
U_1=\begin{pmatrix}
  0&0&0&1&1 \\
  0&0&1&0&1 \\
  0&1&0&0&1 \\
  1&0&0&0&1 \\
  1&1&1&1&1
  \end{pmatrix},\ \  U_2=\begin{pmatrix}
  0&1&1&1&0 \\
  1&0&1&1&0 \\
  1&1&0&1&0 \\
  1&1&1&0&0 \\
  0&0&0&0&3 \\
  \end{pmatrix},\ \ U_3=\begin{pmatrix}
  0&1&1&1&0 \\
  1&0&1&1&0 \\
  1&1&1&0&0 \\
  1&1&0&1&0 \\
  0&0&0&0&3 \\
  \end{pmatrix},\ \
U_4=\begin{pmatrix}
  1&0&1&1&0 \\
  0&1&1&1&0 \\
  1&1&1&0&0 \\
  1&1&0&1&0 \\
  0&0&0&0&3 \\
  \end{pmatrix},\\[2.5em]
  U_5=\begin{pmatrix}
  1&2&0&0&0 \\
  2&1&0&0&0 \\
  0&0&1&2&0 \\
  0&0&2&1&0 \\
  0&0&0&0&3
  \end{pmatrix},\ \  U_6=\begin{pmatrix}
  1&2&0&0&0 \\
  2&1&0&0&0 \\
  0&0&2&1&0 \\
  0&0&1&2&0 \\
  0&0&0&0&3
  \end{pmatrix},\ \  U_7=\begin{pmatrix}
  2&1&0&0&0 \\
  1&2&0&0&0 \\
  0&0&2&1&0 \\
  0&0&1&2&0 \\
  0&0&0&0&3
  \end{pmatrix}.
\end{array}$$
Each pair $(T_3,Q_2,U_t)$ $\left ( 2\le t\le7 \right ) $ above determines a based module, but only the based module with representation matrix $U_1$ is irreducible. We denote it by $M_{5,2}$.

Also, the based modules obtained by taking $Q=Q_3$ are equivalent to the based module $M_{5,1}$ found in Case 3.1.
\end{proof}

\section{Categorified based modules by module categories over ${\rm Rep}(S_4)$}
\label{se:categorified}

In this section, we will apply the knowledge of module categories over the complex representation category of a finite group to find which based modules over $r(S_4)$ can be categorified by module categories over the representation category ${\rm Rep}(S_4)$ of $S_4$. For the details about module categories over tensor categories, see e.g. \cite[Chapter 7]{E}.

\smallskip
First we recall the required result for the upcoming discussion. For any finite group $G$, the second cohomology group $H^2(G,\CC^*)$ is known to be a finite abelian group called the {\bf Schur multiplier} and classifies central extensions of $G$. The notion of universal central extension of a finite group was first investigated by Schur in \cite{Sc}.

Let ${\rm Rep}(G,\alpha)$ denote the semisimple abelian category of projective representations of $G$ with the multiplier $\alpha\in Z^2(G,\CC^*)$.
Equivalently, ${\rm Rep}(G,\alpha)$ is the representation category of the twisted group algebra $\CC G_\alpha$ of $G$ with multiplication
$$g\cdot_\alpha h=\alpha(g,h)gh,\quad g,h\in G.$$
In particular, ${\rm Rep}(G,\alpha)={\rm Rep}(G)$ when taking $\alpha=1$.

Let $\alpha \in Z^2(G,\CC^*)$ represent an element of order $d$ in $H^2(G,\CC^*)$.
Define
$${\rm Rep}^\alpha(G)=\bigoplus _{j=0}^{d-1}{\rm Rep}(G,\alpha^j).$$
According to the result in~\cite{C}, we know that ${\rm Rep}^\alpha(G)$ becomes a fusion category with the tensor product of two projective representations in ${\rm Rep}(G,\alpha^i)$ and ${\rm Rep}(G,\alpha^j)$ respectively lying in ${\rm Rep}(G,\alpha^{i+j})$, and the dual object in ${\rm Rep}(G,\alpha^i)$ lying in ${\rm Rep}(G,\alpha^{d-i})$. Correspondingly, we have the fusion ring
\begin{equation}\label{eq:proj_rep_ring}
r^\alpha(G)=\bigoplus _{j=0}^{d-1}r(G,\alpha^j).
\end{equation}

Now let $H$ be a subgroup of $G$ and $\alpha \in Z^2(H,\CC^*)$. The category ${\rm Rep}(H,\alpha)$ is a module category over ${\rm Rep}(G)$ by applying the restriction functor ${\rm Res}_H^{G}:{\rm Rep}(G)\to {\rm Rep}(H)$.
\begin{theorem}[{\cite[Theorem 3.2]{O}}]
The indecomposable exact module categories over the representation category ${\rm Rep}(G)$ are of the form ${\rm Rep}(H,\alpha)$ and are classified by conjugacy classes of pairs $(H,[\alpha])$.
\end{theorem}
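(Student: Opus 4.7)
The plan is to realize any indecomposable exact module category $\calm$ over ${\rm Rep}(G)$ as the category of modules over an internal algebra object, and then to classify such algebras explicitly. Since ${\rm Rep}(G)$ is a finite tensor category and $\calm$ is exact, the general module-category/algebra correspondence (see \cite[Ch.~7]{E}) produces an equivalence $\calm \simeq \mathrm{mod}_{{\rm Rep}(G)}(A)$ for some algebra object $A$ in ${\rm Rep}(G)$, unique up to Morita equivalence internal to ${\rm Rep}(G)$; indecomposability of $\calm$ translates to $A$ not splitting as a nontrivial product of algebra objects. This reduces the theorem to classifying indecomposable semisimple algebra objects of ${\rm Rep}(G)$ up to internal Morita equivalence.

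I would next describe these algebras. An algebra in ${\rm Rep}(G)$ is a finite-dimensional complex algebra carrying a compatible $G$-action. In the commutative case, the semisimple ones are the function algebras $\mathrm{Fun}(X)$ on finite $G$-sets $X$, with indecomposable ones corresponding to transitive $G$-sets $G/H$, i.e.\ to subgroups $H\le G$ taken up to conjugacy. For the general (possibly non-commutative) indecomposable semisimple case, picking a primitive central idempotent $e\in A$ with stabilizer $H\le G$ shows by transitivity of the $G$-action on primitive central idempotents that $A$ is induced from the corner $eAe$; the latter is an $H$-equivariant simple $\CC$-algebra $\End_{\CC}(V)$ for some vector space $V$, where the $H$-action arises from a projective representation of $H$ whose obstruction records a class $[\alpha]\in H^2(H,\CC^*)$. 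Hence $A$ is internally Morita equivalent to the twisted algebra determined by the pair $(H,[\alpha])$.

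Finally I would match $\mathrm{mod}_{{\rm Rep}(G)}(A)$ with ${\rm Rep}(H,\alpha)$ and verify the conjugacy classification. An induction/restriction adjunction identifies $\mathrm{mod}_{{\rm Rep}(G)}(\mathrm{Fun}(G/H))$ with ${\rm Rep}(H)$, the ${\rm Rep}(G)$-action being restriction ${\rm Res}^G_H$; incorporating the twist $\alpha$ replaces ${\rm Rep}(H)$ by ${\rm Rep}(H,\alpha)$, while the ${\rm Rep}(G)$-action is still by restriction, since tensoring an untwisted $H$-representation with an $\alpha$-twisted one remains $\alpha$-twisted. Two pairs $(H,[\alpha])$ and $(H',[\alpha'])$ yield equivalent module categories iff their associated algebras are internally Morita equivalent in ${\rm Rep}(G)$, which unwinds exactly to $G$-conjugacy of the pairs.

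The main obstacle I expect is the middle step: showing cleanly that every indecomposable semisimple algebra in ${\rm Rep}(G)$ is Morita equivalent to one of the asserted twisted form, and that $[\alpha]\in H^2(H,\CC^*)$ is a genuine equivariant Morita invariant. This demands separating equivariant Morita theory from ordinary Morita theory over $\CC$: choosing a simple idempotent in $A$ breaks the global $G$-symmetry down to the stabilizer $H$, and the obstruction to lifting such a choice to a $G$-equivariant family is recorded precisely by a class in $H^2(H,\CC^*)$, the same cohomological mechanism parametrizing central extensions (the Schur multiplier) recalled earlier in the section.
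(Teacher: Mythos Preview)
The paper does not give its own proof of this theorem; it is quoted verbatim from Ostrik \cite[Theorem~3.2]{O} and used as a black box. There is therefore nothing in the paper to compare your proposal against.

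That said, your sketch is the standard route and is essentially Ostrik's original argument: pass to an internal algebra $A$ in ${\rm Rep}(G)$ via the module-category/algebra correspondence, classify indecomposable semisimple $G$-algebras up to $G$-equivariant Morita equivalence, and extract the pair $(H,[\alpha])$ from the stabilizer of a primitive central idempotent together with the projective obstruction. The identification $\mathrm{mod}_{{\rm Rep}(G)}(A)\simeq {\rm Rep}(H,\alpha)$ and the conjugacy statement are also as in Ostrik. One small caution: you should justify that $A$ may be taken \emph{semisimple} (not merely an algebra object). This follows because ${\rm Rep}(G)$ is a fusion category over $\CC$ and $\calm$ is exact, hence semisimple, so one can choose a simple generator $M\in\calm$ and take $A=\underline{\End}(M)$, which is then semisimple as a $\CC$-algebra; without this step the structure theory of $G$-algebras you invoke does not directly apply.
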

Consequently by \cite[Prop.~7.7.2]{E}, we know that
\begin{prop}
The Grothendieck group
$$r(H,\alpha)={\rm Gr}({\rm Rep}(H,\alpha))$$
is an irreducible $\mathbb{Z}_+$-module over $r(G)$.
\end{prop}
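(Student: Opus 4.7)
The plan is to obtain this as a direct consequence of the general principle that passing to the Grothendieck group turns (indecomposable, exact) module categories over a fusion category into (irreducible) $\mathbb{Z}_+$-modules over its Grothendieck ring, applied to the concrete situation provided by the theorem quoted just above.

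First I would spell out why $r(H,\alpha) := \mathrm{Gr}(\mathrm{Rep}(H,\alpha))$ carries the structure of a $\mathbb{Z}_+$-module over $r(G) = \mathrm{Gr}(\mathrm{Rep}(G))$. Since $\mathrm{Rep}(H,\alpha)$ is semisimple, its Grothendieck group is a free abelian group with distinguished $\mathbb{Z}$-basis $\{[M_l]\}_{l \in L}$ indexed by isomorphism classes of simple objects. For any simple $V \in \mathrm{Rep}(G)$ and simple $M_l \in \mathrm{Rep}(H,\alpha)$, the action functor sends $V \otimes M_l$ to an object of $\mathrm{Rep}(H,\alpha)$ whose decomposition into simples has multiplicities $a_{V,l}^k = \dim \mathrm{Hom}(M_k, V \otimes M_l) \in \mathbb{Z}_{\ge 0}$. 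These non-negative integers are precisely the structure constants required in Definition~\ref{def:Z_+-mod}, so $r(H,\alpha)$ is a $\mathbb{Z}_+$-module over $r(G)$.

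Next I would invoke the preceding theorem, which guarantees that $\mathrm{Rep}(H,\alpha)$ is an indecomposable exact module category over $\mathrm{Rep}(G)$. The key input is then \cite[Prop.~7.7.2]{E}, which asserts that the Grothendieck group of any indecomposable exact module category over a fusion category is an irreducible $\mathbb{Z}_+$-module over the corresponding fusion ring. To see why this holds, one argues by contrapositive: a proper $\mathbb{Z}_+$-submodule of $\mathrm{Gr}(\mathrm{Rep}(H,\alpha))$ spanned by a subset $S$ of the simple classes would mean that the full subcategory generated by the simples in $S$ is closed under the $\mathrm{Rep}(G)$-action, producing a nontrivial module subcategory and forcing a direct sum decomposition $\mathrm{Rep}(H,\alpha) = \calm_1 \oplus \calm_2$ as module categories, contradicting indecomposability.

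Combining the two ingredients yields the claim. The main obstacle is purely expository rather than mathematical: one must be careful that the notions of indecomposability used on the categorical side (no nontrivial decomposition as a module category) and on the combinatorial side (no proper $\mathbb{Z}_+$-submodule) correspond exactly under $\mathrm{Gr}(-)$; this is precisely the content of \cite[Prop.~7.7.2]{E}, and once it is cited the proposition follows at once.
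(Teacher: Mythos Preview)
Your proposal is correct and follows essentially the same route as the paper: the paper simply states that the proposition follows ``consequently by \cite[Prop.~7.7.2]{E}'' from the preceding theorem of Ostrik, which is exactly the combination of ingredients you spell out. Your write-up just expands on why the Grothendieck group is a $\mathbb{Z}_+$-module and sketches the content of \cite[Prop.~7.7.2]{E}, but the logical structure is identical.
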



\medskip

Next we show that any $\mathbb{Z}_+$-module over the complex representation ring $r(G)$ of a finite group $G$ categorified by this way is a based module.
\begin{theorem}\label{th:cate_based}
Let $G$ be a finite group, $H$ a subgroup of $G$ and $\alpha \in  Z^2(H,\CC^*)$. The $\mathbb{Z}_+$-module $r(H,\alpha)$ over $r(G)$ is a based module.
\end{theorem}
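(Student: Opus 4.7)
The plan is to verify directly the defining identity $a_{il}^{k}=a_{i^{*}k}^{l}$ of Definition~\ref{def:based-mod} by exploiting the rigidity of ${\rm Rep}(G)$ together with the semisimplicity of ${\rm Rep}(H,\alpha)$.

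First I would fix notation: label the $\mathbb{Z}_+$-basis of $r(G)$ by the isomorphism classes $\{[V_{i}]\}_{i\in I}$ of irreducible complex $G$-representations, with involution $i\mapsto i^{*}$ given by duality $V_{i}^{*}=V_{i^{*}}$, and label the $\mathbb{Z}_+$-basis of $r(H,\alpha)$ by the classes $\{[W_{l}]\}_{l\in L}$ of irreducible $\alpha$-projective $H$-representations. Because ${\rm Res}_{H}^{G}V_{i}$ carries the trivial cocycle, the tensor product $({\rm Res}_{H}^{G}V_{i})\otimes W_{l}$ again lies in ${\rm Rep}(H,\alpha)$, so the action of $r(G)$ on $r(H,\alpha)$ is determined by
$$[V_{i}]\cdot [W_{l}]=\bigl[({\rm Res}_{H}^{G}V_{i})\otimes W_{l}\bigr]=\sum_{k}a_{il}^{k}[W_{k}],$$
where $a_{il}^{k}$ is the multiplicity of the irreducible $W_{k}$ as a direct summand.

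The key input is the tensor-hom adjunction for the module category ${\rm Rep}(H,\alpha)$ over the rigid tensor category ${\rm Rep}(G)$: for every $V\in{\rm Rep}(G)$ and all $X,Y\in{\rm Rep}(H,\alpha)$ there is a natural isomorphism
$$\Hom_{H,\alpha}\bigl(({\rm Res}_{H}^{G}V)\otimes X,\,Y\bigr)\ \cong\ \Hom_{H,\alpha}\bigl(X,\,({\rm Res}_{H}^{G}V^{*})\otimes Y\bigr).$$
Specializing to $V=V_{i}$, $X=W_{l}$, $Y=W_{k}$ and using $V_{i}^{*}=V_{i^{*}}$ gives
$$\dim\Hom_{H,\alpha}\bigl(({\rm Res}_{H}^{G}V_{i})\otimes W_{l},\,W_{k}\bigr)\ =\ \dim\Hom_{H,\alpha}\bigl(W_{l},\,({\rm Res}_{H}^{G}V_{i^{*}})\otimes W_{k}\bigr).$$

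To conclude, I would invoke semisimplicity of ${\rm Rep}(H,\alpha)$ and Schur's lemma for irreducible projective representations over $\CC$ (which still yields $\End_{H,\alpha}(W_{k})=\CC$) to identify the left-hand side with $a_{il}^{k}$ and the right-hand side with $a_{i^{*}k}^{l}$, producing the desired equality. The only point requiring genuine care — and therefore the main, though mild, obstacle — is justifying the displayed adjunction in the projective setting; this is the module-category form of Frobenius reciprocity and holds automatically because ${\rm Rep}(H,\alpha)$ is a module category over the rigid tensor category ${\rm Rep}(G)$ through the restriction functor, so dualization of the acting object passes across $\Hom$ in the usual way.
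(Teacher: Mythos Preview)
Your argument is correct, but the paper proceeds differently. Rather than working categorically, the paper embeds $r(H,\alpha)$ into the ambient fusion ring $r^{\alpha}(H)=\bigoplus_{j}r(H,\alpha^{j})$, writes the restriction as ${\rm Res}_{H}^{G}\psi_{i}=\sum_{j}r_{ij}\chi_{j}$ inside that ring, and then uses the cyclic symmetry $n_{ij}^{k}=n_{i^{*}k}^{j}$ of fusion coefficients together with the compatibility of restriction with duality ($r_{ij}=r_{i^{*}j^{*}}$) to obtain $a_{il}^{k}=\sum_{j}r_{ij}n_{jl}^{k}=\sum_{j}r_{i^{*}j}n_{jk}^{l}=a_{i^{*}k}^{l}$. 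Your route via the adjunction $\Hom_{H,\alpha}(({\rm Res}\,V)\otimes X,Y)\cong\Hom_{H,\alpha}(X,({\rm Res}\,V^{*})\otimes Y)$ is more direct and conceptual: it bypasses the auxiliary fusion ring entirely and shows that the based-module condition is really just rigidity plus semisimplicity. The paper's approach, by contrast, stays at the ring level and makes visible that once the restriction homomorphism respects the $*$-involution, the result is a formal consequence of fusion-ring axioms.
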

\begin{proof}
Let $\left \{ {\psi}_i \right \} _{i\in I}$ be the $\mathbb{Z}_+$-basis of $r(G)$.
Take $r^\alpha(H)$ defined in Eq.~\eqref{eq:proj_rep_ring} as a $\mathbb{Z}_+$-module over $r(G)$ with the $\mathbb{Z}$-basis $\left \{ {\chi}_k \right \} _{k\in J}$ such that
$$\psi_i.\chi_k=\sum_l a_{ik}^l \chi_l,\quad a_{ik}^l\in \mathbb{Z}_+.$$

On the other hand, we write the fusion rule of the fusion ring $r^\alpha(H)$ as follows,
\begin{equation*}
\chi_i\chi_j=\sum_{k=1}^{s} n_{ij}^k\chi_k,\quad n_{ij}^k\in \mathbb{Z}_+.
\end{equation*}
Since the number $n_{ij}^{k^*}$ is invariant under cyclic permutations of $i,j,k$
, we have
$$n_{ij}^k=n_{k^*i}^{j^*}=n_{i^*k}^{j}.$$
By the restriction rule, we interpret $r(G)$ as a subring of $r^\alpha(H)$ and write down
$$\psi_i=\sum_j r_{ij} \chi_j,\quad r_{ij}\in \mathbb{Z}_+.$$
Then
$$\psi_i.\chi_k=\sum_j r_{ij} \chi_j\chi_k= \sum_{j,\,l} r_{ij}n_{jk}^l \chi_l.$$
By comparing the coefficients, we see that
$$a_{ik}^l=\sum_{j} r_{ij}n_{jk}^l
=\sum_{j} r_{ij}n_{j^*l}^k=\sum_{j} r_{i^*j^*}n_{j^*l}^k=\sum_{j} r_{i^*j}n_{jl}^k=a_{i^*l}^k,
$$
so $r^\alpha(H)$ is a based module over $r(G)$, and $r(H,\alpha)$ is clearly a based submodule of $r^\alpha(H)$. Equivalently, any $\mathbb{Z}_+$-module over $r(G)$ categorified by a module category ${\rm Rep}(H,\alpha)$  over ${\rm Rep}(G)$ must be a based module.
\end{proof}

\smallskip

By Theorem~\ref{th:cate_based}, we only need to focus on those inequivalent irreducible based modules $M_{i,j}$ over $r(S_4)$ collected in Section~\ref{se:irred_Z_+}
, each of which is possibly categorified by a module category ${\rm Rep}(H,\alpha)$ for some $H<S_4$ and $\alpha \in Z^2(H,\CC^*)$.

All the non-isomorphic subgroups of the symmetric group $S_4$ are as follows:
\begin{enumerate}[(i)]
\item\label{g1}
the symmetric group $S_3$;
\item\label{g2}
the cyclic groups $\mathbb Z_i,\,1\leq i\leq 4$;
\item\label{g3}
the Klein 4-group $K_4$;
\item\label{g4}
the alternating group $A_4$;
\item\label{g5}
the dihedral group $D_4$;
\item\label{g6}
the symmetric group $S_4$ itself.
\end{enumerate}

Correspondingly, the Schur multipliers we concern here are given as follows (see e.g. \cite{Ka}),
$$H^2(\mathbb Z_n,\CC^*)\cong H^2(S_3,\CC^*)\cong 0,\ n\geq 1,\quad \ H^2(K_4,\CC^*)\cong H^2(D_4,\CC^*)\cong H^2(A_4,\CC^*)\cong H^2(S_4,\CC^*)\cong\mathbb Z_2.$$
As a result, we only need to consider the following two situations.

(1) Module category ${\rm Rep}(H)$ for any subgroup $H<S_4$;

(2) Module category ${\rm Rep}(H,\alpha)$ for any subgroup $H<S_4$ and nontrivial twist $\alpha \in Z^2(H,\CC^*)$.

\subsection{The module categories over ${\rm Rep}(S_4)$ with trivial twists}
\
\newline

\vspace{-.7em}
\ref{g1} First we consider the representation category ${\rm Rep}(S_3)$ as a module category over ${\rm Rep}(S_4)$.
\begin{theorem}
$r(S_3) = {\rm Gr}({\rm Rep}(S_3))$ is an irreducible based module over $r(S_4) = {\rm Gr}({\rm Rep}(S_4))$ equivalent to the based module $M_{3,2}$ in TABLE~\ref{t2}.
\end{theorem}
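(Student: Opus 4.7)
The plan is to realize $r(S_3)$ as an $r(S_4)$-module through the restriction functor and then read off the action matrices explicitly. Embed $S_3$ in $S_4$ as the stabilizer of the point $4$, fix the standard $\mathbb Z$-basis $\{m_1,m_2,m_3\}$ of $r(S_3)$ where $m_1$ is trivial, $m_2$ is the sign representation, and $m_3$ is the $2$-dimensional standard representation, and recall that $V\cdot m = ({\rm Res}^{S_4}_{S_3} V)\otimes m$ for $V\in r(S_4)$ and $m\in r(S_3)$.

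The first computational step is to decompose ${\rm Res}^{S_4}_{S_3}V$ for each irreducible $V\in r(S_4)$. Using either Table~\ref{S4} or direct character/geometric arguments, I expect
\begin{align*}
{\rm Res}^{S_4}_{S_3}\mathbf 1 &= m_1,\qquad {\rm Res}^{S_4}_{S_3}V_\psi = m_2, \\
{\rm Res}^{S_4}_{S_3}V_{\rho_1} &= m_3,\qquad {\rm Res}^{S_4}_{S_3}V_{\rho_2}=m_1+m_3,\qquad {\rm Res}^{S_4}_{S_3}V_{\rho_3}=m_2+m_3;
\end{align*}
the middle one uses that $V_{\rho_1}$ factors through $S_4/K_4\cong S_3$ and $S_3\cap K_4=\{e\}$, while $V_{\rho_2}$ is the standard rep, so its restriction is read off from splitting the permutation representation on $\{1,2,3,4\}$ into the $S_3$-orbits $\{1,2,3\}$ and $\{4\}$.

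Next I combine these restrictions with the fusion rule of $r(S_3)$ (in particular $m_2\otimes m_3=m_3$ and $m_3\otimes m_3 = m_1+m_2+m_3$) to produce the three matrices representing $V_\psi$, $V_{\rho_1}$ and $V_{\rho_2}$ in the basis $(m_1,m_2,m_3)$. A direct bookkeeping step then yields
\[
V_\psi\mapsto\begin{pmatrix}0&1&0\\1&0&0\\0&0&1\end{pmatrix},\quad V_{\rho_1}\mapsto\begin{pmatrix}0&0&1\\0&0&1\\1&1&1\end{pmatrix},\quad V_{\rho_2}\mapsto\begin{pmatrix}1&0&1\\0&1&1\\1&1&2\end{pmatrix},
\]
which is exactly the triple appearing in the row $M_{3,2}$ of TABLE~\ref{t2}; hence $r(S_3)\simeq M_{3,2}$ as $\mathbb Z_+$-modules over $r(S_4)$.

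Finally, irreducibility and the based structure do not need a separate argument: the proposition quoted just before Theorem~\ref{th:cate_based} guarantees that $r(H,\alpha)$ is always an irreducible $\mathbb Z_+$-module over $r(G)$, and Theorem~\ref{th:cate_based} itself upgrades this to a based module. I do not foresee any substantial obstacle; the only thing that requires a bit of care is being consistent with the convention in Definition~\ref{def:Z_+-mod} so that the matrices are written in the same row/column convention used in TABLE~\ref{t2}, and verifying the $V_{\rho_1}$ restriction carefully (the intersection $S_3\cap K_4=\{e\}$ is the crucial point).
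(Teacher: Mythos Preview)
Your proposal is correct and follows essentially the same route as the paper: compute the restrictions of the five irreducibles of $S_4$ to $S_3$, then use the fusion rule of $r(S_3)$ to write down the action matrices and match them against TABLE~\ref{t2}. The paper simply cites the branching rule where you give the explicit arguments via $S_4/K_4\cong S_3$ and the orbit decomposition of the permutation representation, but the computations and the resulting matrices are identical.
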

\begin{proof}
According to the branching rule of symmetric groups (see e.g. \cite[Theorem 2.8.3]{Sa}),
we have the following restriction rules.
\begin{align*}
{\rm Res}_{S_3}^{S_4} \left ( 1 \right )=1,\quad {\rm Res}_{S_3}^{S_4} \left ( V_{\psi} \right )=\chi,\quad {\rm Res}_{S_3}^{S_4} \left ( V_{\rho_{1}} \right )=V,\\
{\rm Res}_{S_3}^{S_4} \left ( V_{\rho_{2}} \right )=1+V,\quad {\rm Res}_{S_3}^{S_4} \left ( V_{\rho_{3}} \right )=\chi+V,
\end{align*}
where $\chi$ and $V$ denote the sign representation and the standard representation in ${\rm Rep}({S_3})$ respectively.
Hence we get the representation matrices of basis elements of $r(S_4)$ acting on $r({S_3})$ as follows.
\begin{align*}
1 \mapsto E_3,\quad V_{\psi} \mapsto \begin{pmatrix}0& 1 &0 \\1&0  &0 \\0&0  &1\end{pmatrix},\quad V_{\rho _{1}}\mapsto \begin{pmatrix}0& 0 &1 \\0& 0 &1  \\1& 1&1 \end{pmatrix},\quad V_{\rho _{2}}\mapsto \begin{pmatrix}1& 0 &1 \\0& 1 &1  \\1& 1&2 \end{pmatrix},\quad V_{\rho _{3}}\mapsto \begin{pmatrix}0& 1 &1 \\1& 0 &1  \\1& 1&2 \end{pmatrix}.
\end{align*}
We see that $r(S_3)$ is an irreducible based module $M_{3,2}$  according to TABLE~\ref{t2}. In other words, the based module $M_{3,2}$ can be categorified by the module category ${\rm Rep}({S_3})$ over ${\rm Rep}(S_4)$.
\end{proof}
\begin{remark}\label{rk:symm}
Since the roles of the standard representation and its dual in $r(S_4)$ are symmetric, we can exchange the notations $V_{\rho_{2}}$ and $V_{\rho_{3}}$ for them to get the following restriction rules instead.
\begin{equation*}
{\rm Res}_{S_3}^{S_4} \left ( V_{\rho_{2}} \right )=\chi+V,\quad {\rm Res}_{S_3}^{S_4} \left ( V_{\rho_{3}} \right )=1+V.
\end{equation*}
Therefore, we get another action of $r(S_4)$ on $r({S_3})$ such that $r(S_3)$ is an irreducible based module over $r(S_4)$ equivalent to the based module $M_{3,3}$ according to TABLE~\ref{t2}. In other words, the based module $M_{3,3}$ can also be categorified by the module category ${\rm Rep}({S_3})$ over ${\rm Rep}(S_4)$.
\end{remark}

\ref{g2} Secondly, we consider ${\rm Rep}(\mathbb{Z}_4)$ as a module category over ${\rm Rep}(S_4)$.
\begin{theorem}
$r(\mathbb{Z}_4)={\rm Gr}({\rm Rep}(\mathbb{Z}_4))$ is an irreducible based module over $r(S_4)$ equivalent to the based module $M_{4,5}$ in TABLE~\ref{t3}.
\end{theorem}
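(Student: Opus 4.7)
The plan is to mirror the proof of the previous theorem (for $r(S_3)$), using instead the cyclic subgroup $\mathbb{Z}_4=\langle (1234)\rangle < S_4$, and then extract the action matrices of $r(S_4)$ on $r(\mathbb{Z}_4)$ via restriction of characters.

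First I would list the four irreducible characters of $\mathbb{Z}_4$ as $\chi_0,\chi_1,\chi_2,\chi_3$ with $\chi_j(g^k)=i^{jk}$ for $g=(1234)$. To compute ${\rm Res}_{\mathbb{Z}_4}^{S_4}(V)$ for each irreducible $V\in {\rm Rep}(S_4)$, I read off the character values of $V$ on the elements $e,(1234),(13)(24),(1432)$ from TABLE~\ref{S4} (noting $e\in(1)$, $(13)(24)\in(12)(34)$, and $(1234),(1432)\in(1234)$), and decompose via the standard inner-product formula on $\mathbb{Z}_4$. I expect this to yield
\begin{equation*}
{\rm Res}(1)=\chi_0,\quad {\rm Res}(V_\psi)=\chi_2,\quad {\rm Res}(V_{\rho_1})=\chi_0+\chi_2,\quad {\rm Res}(V_{\rho_2})=\chi_1+\chi_2+\chi_3,\quad {\rm Res}(V_{\rho_3})=\chi_0+\chi_1+\chi_3.
\end{equation*}

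Second, since $r(\mathbb{Z}_4)$ has multiplication $\chi_i\cdot\chi_j=\chi_{i+j\bmod 4}$, the $r(S_4)$-action on $\chi_k\in r(\mathbb{Z}_4)$ is $V.\chi_k = {\rm Res}(V)\cdot \chi_k$. A naive ordering $(\chi_0,\chi_1,\chi_2,\chi_3)$ makes $V_\psi$ act as the permutation $(13)(24)$, which is conjugate to but not equal to the $(12)(34)$ matrix appearing in $M_{4,5}$. So I would reorder the basis as $(\chi_0,\chi_2,\chi_1,\chi_3)$, under which $V_\psi$ swaps adjacent pairs and its matrix becomes exactly the $V_\psi$ block of $M_{4,5}$. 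I then compute the matrices of $V_{\rho_1}$ and $V_{\rho_2}$ in this ordered basis by applying the restriction formulas above together with the group law of $\mathbb{Z}_4$.

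Third, I verify by direct comparison that the three matrices obtained coincide with those listed for $M_{4,5}$ in TABLE~\ref{t3}; the matrix for $V_{\rho_3}$ is then automatic since $W=TU$. By the general theorem quoted from \cite{O}, $r(\mathbb{Z}_4)$ is already an irreducible $\mathbb{Z}_+$-module over $r(S_4)$, so no additional irreducibility check is required; by Theorem~\ref{th:cate_based} it is automatically a based module. The only real choice in the argument is the basis reordering, and since no two inequivalent irreducible based modules of rank $4$ in TABLE~\ref{t3} share the same $T$- and $Q$-matrices together with a $U$-matrix having all entries in $\{0,1\}$, the match with $M_{4,5}$ (rather than some other $M_{4,i}$) will be unambiguous. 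I do not anticipate a genuine obstacle; the argument is a straightforward character computation followed by a relabeling.
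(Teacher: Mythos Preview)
Your approach is correct and essentially the same as the paper's: both compute the restrictions ${\rm Res}_{\mathbb{Z}_4}^{S_4}$ via characters, obtain the same decomposition formulas, and then identify $r(\mathbb{Z}_4)$ with $M_{4,5}$ through a basis relabeling (the paper uses the natural ordering and gives an explicit isomorphism $w_1\mapsto U_3,\ w_2\mapsto U_1,\ w_3\mapsto U_2,\ w_4\mapsto 1$, whereas you reorder up front to $(\chi_0,\chi_2,\chi_1,\chi_3)$; both work). One small correction: your closing remark that no two modules in TABLE~\ref{t3} share the same $T$- and $Q$-matrices together with a $\{0,1\}$-valued $U$ is false---$M_{4,5}$, $M_{4,6}$, $M_{4,7}$ all have identical $T$ and $Q$ and $\{0,1\}$-valued $U$---so the identification with $M_{4,5}$ really does rest on the direct entry-by-entry comparison you already propose, not on that shortcut.
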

\begin{proof}
Let $\mathbb{Z}_4=\left \{ 1,g,g^2,g^3 \right \} $ be the cyclic group of order $4$, and it has four non-isomorphic 1-dimensional irreducible representations, denoted by $U_i,\,i=0,1,2,3$. Let $U_0=1$ represent  the trivial representation and
\begin{equation*}
\chi_{U_1}(g) =\sqrt{-1} ,\quad \chi_{U_2}(g) =-1,\quad\chi_{U_3}(g) =-\sqrt{-1}.
\end{equation*}
On the other hand, we consider $\mathbb{Z}_4$ as the subgroup of $S_4$ generated by $g=(1234)$. Then by the character table of $S_4$ (TABLE~\ref{S4}), we have
\begin{align*}
&\chi_\psi(g^i)=(-1)^i,\quad \chi_{\rho_1}(g^i)=1+(-1)^i,\quad
\chi_{\rho_2}(g^i)=(-1)^i+(\sqrt{-1})^i+(-\sqrt{-1})^i,\\
&\chi_{\rho_3}(g^i)=1+(\sqrt{-1})^i+(-\sqrt{-1})^i.
\end{align*}
So the restriction rule of $r(S_4)$ on $r(\mathbb{Z}_4)$ is given as follows.
\begin{align*}
&{\rm Res}_{\mathbb{Z}_4}^{S_4} \left ( 1 \right )=1,\quad {\rm Res}_{\mathbb{Z}_4}^{S_4} \left ( V_{\psi} \right )=U_2,\quad {\rm Res}_{\mathbb{Z}_4}^{S_4} \left ( V_{\rho_{1}} \right )=1+U_2,\\
&{\rm Res}_{\mathbb{Z}_4}^{S_4} \left ( V_{\rho_{2}} \right )=U_1+U_2+U_3,\quad{\rm Res}_{\mathbb{Z}_4}^{S_4} \left ( V_{\rho_{3}} \right )=1+U_1+U_3.
\end{align*}
Then we get the representation matrices of basis elements of $r(S_4)$ acting on $r(\mathbb{Z}_4)$ as follows.
\begin{align*}
&1 \mapsto E_4,\quad V_{\psi} \mapsto \begin{pmatrix}0& 0 &1 &0\\0& 0 &0&1\\1& 0&0&0\\0&1&0&0\end{pmatrix},\quad V_{\rho _{1}}\mapsto \begin{pmatrix}1&0&1&0\\0&1&0&1\\1&0&1&0\\0&1&0&1\end{pmatrix},\\
&V_{\rho _{2}}\mapsto \begin{pmatrix}0&1&1&1\\1&0&1&1\\1&1&0&1\\1&1&1&0 \end{pmatrix},\quad V_{\rho_{3}}\mapsto \begin{pmatrix}1&1&0&1\\1&1&1&0\\0&1&1&1\\1&0&1&1\end{pmatrix}.
\end{align*}
Let $\left \{ w_i\right \}_{1\leq i\leq 4} $  be the stated $\mathbb{Z}$-basis of $M_{4,5}$, and define a $\mathbb{Z}$-linear map $\varphi:M_{4,5}\to r(\mathbb{Z}_4)$ by
\begin{equation*}
\varphi  (w_1)=U_3,\quad \varphi  (w_2)=U_1,\quad \varphi  (w_3)=U_2,\quad\varphi  (w_4)=1.
\end{equation*}
Then it is easy to check that $\varphi $ is an isomorphism of $r(S_{4})$-modules, so $M_{4,5}\cong r(\mathbb{Z}_4)$ as based modules by Definition \ref{irr-m} (i). In other words, based module $M_{4,5}$ can be categorified by the module category ${\rm Rep}(\mathbb{Z}_4)$ over ${\rm Rep}(S_4)$.
\end{proof}
\begin{remark}By the same argument as in Remark~\ref{rk:symm}, $V_{\rho _{2}}$ and $V_{\rho _{3}}$ can be required to satisfy the following restriction rules instead.
\begin{equation*}
{\rm Res}_{\mathbb{Z}_4}^{S_4} \left ( V_{\rho_{2}} \right )=1+U_1+U_3,\quad{\rm Res}_{\mathbb{Z}_4}^{S_4} \left ( V_{\rho_{3}} \right )=U_1+U_2+U_3.
\end{equation*}
Therefore, we get another action of $r(S_4)$ on $r({\mathbb{Z}_4})$ such that $r(\mathbb{Z}_4)$ is an irreducible based module over $r(S_4)$ equivalent to the based module $M_{4,7}$ according to TABLE~\ref{t3}. In other words, the based module $M_{4,7}$ can also be categorified by the module category ${\rm Rep}(\mathbb{Z}_4)$ over ${\rm Rep}(S_4)$.

Also, one can similarly check that the module category ${\rm Rep}(\mathbb{Z}_2)$ over ${\rm Rep}(S_4)$ categorifies the based modules $M_{2,2}$ and $M_{2,3}$, while ${\rm Rep}(\mathbb{Z}_3)$ over ${\rm Rep}(S_4)$ categorifies the based module $M_{3,1}$.
\end{remark}
\ref{g3} Now we consider ${\rm Rep}(K_4)$ as a module category over ${\rm Rep}(S_4)$.
\begin{theorem}
$r(K_4)={\rm Gr}({\rm Rep}(K_4))$ is an irreducible based module over $r(S_4)$ equivalent to the based module $M_{4,7}$ in TABLE~\ref{t3}.
\end{theorem}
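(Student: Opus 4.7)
The plan is to follow the same template as the preceding two theorems: fix an embedding $K_4 \hookrightarrow S_4$, decompose each ${\rm Res}^{S_4}_{K_4}(V_{\cdot})$ into characters of $K_4$ via the character table (TABLE~\ref{S4}), derive the action of $r(S_4)$ on $r(K_4)$ from these restrictions together with the multiplication in $r(K_4)$, and finally match the resulting representation matrices against those of $M_{4,7}$.

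The only subtle point is the choice of embedding, since $S_4$ has two conjugacy classes of Klein four-subgroups. I would use the non-normal class represented by $K_4=\{e,(12),(34),(12)(34)\}$ rather than the normal $V_4=\{e,(12)(34),(13)(24),(14)(23)\}$: inside $V_4$ all three non-identity elements lie in a single $S_4$-conjugacy class, which forces ${\rm Res}(V_\psi)$ and ${\rm Res}(V_{\rho_1})$ to be multiples of the trivial character and produces the based module $M_{4,2}$ instead of $M_{4,7}$.

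Writing the four linear characters of $K_4$ as $\chi_0,\chi_1,\chi_2,\chi_3$, with $\chi_0$ trivial and $\chi_3$ the one that is trivial on $(12)(34)$ (and nontrivial on $(12)$ and $(34)$), applying TABLE~\ref{S4} yields
\begin{align*}
{\rm Res}_{K_4}^{S_4}(V_\psi) &= \chi_3, & {\rm Res}_{K_4}^{S_4}(V_{\rho_1}) &= \chi_0+\chi_3,\\
{\rm Res}_{K_4}^{S_4}(V_{\rho_2}) &= \chi_0+\chi_1+\chi_2, & {\rm Res}_{K_4}^{S_4}(V_{\rho_3}) &= \chi_1+\chi_2+\chi_3.
\end{align*}
Multiplying each restriction by the $\chi_i$ inside $r(K_4)$---an immediate calculation since $K_4\cong\mathbb{Z}_2\times\mathbb{Z}_2$---produces the representation matrices of $V_\psi$, $V_{\rho_1}$ and $V_{\rho_2}$ on $r(K_4)$.

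Finally, reordering the $\mathbb{Z}$-basis as $(\chi_0,\chi_3,\chi_1,\chi_2)$ makes $V_\psi$ act by the permutation $(12)(34)$, matching the $T$-block of $M_{4,7}$; a direct inspection then shows that the two remaining matrices coincide with the $Q$- and $U$-blocks of $M_{4,7}$ in TABLE~\ref{t3}. The $\mathbb{Z}$-linear bijection $v_1\mapsto\chi_0$, $v_2\mapsto\chi_3$, $v_3\mapsto\chi_1$, $v_4\mapsto\chi_2$ is therefore a based-module equivalence in the sense of Definition~\ref{irr-m}(i); that $r(K_4)$ is a based module is automatic from Theorem~\ref{th:cate_based}, and its irreducibility follows from that of $M_{4,7}$ already established in Proposition~\ref{prop:03}. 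No step is computationally heavy, and the only real pitfall is selecting the wrong conjugacy class of Klein four-subgroup at the outset.
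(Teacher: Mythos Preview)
Your proposal is correct and follows essentially the same approach as the paper: both embed $K_4$ as the non-normal subgroup $\langle(12),(34)\rangle$, compute the restrictions of the irreducible $S_4$-characters via TABLE~\ref{S4}, and exhibit an explicit basis bijection with $M_{4,7}$. The paper's bijection ($w_1\mapsto Y_2$, $w_2\mapsto Y_1$, $w_3\mapsto 1$, $w_4\mapsto Y_3$ in its notation) differs from yours but both are valid equivalences; your remark on why the normal Klein subgroup would instead yield $M_{4,2}$ is an additional clarification the paper does not include.
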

\begin{proof}
We consider $K_4$ as the subgroup of $S_4$ generated by $(12)$ and $(34)$, and it has four non-isomorphic 1-dimensional irreducible representations $Y_0=1$ and $Y_1,Y_2,Y_3$ such that
\begin{align*}
&\chi_{Y_1}((12)) =-1,\quad \chi_{Y_1}((34)) =1;\quad\chi_{Y_2}((12)) =1,\quad\chi_{Y_2}((34)) =-1;\\
&\chi_{Y_3}((12))=-1,\quad \chi_{Y_3}((34)) = -1.
\end{align*}
On the other hand, by the character table of $S_4$ (TABLE~\ref{S4}), we have
\begin{align*}
&\chi_\psi((12))=\chi_\psi((34))=-1,\quad \chi_\psi((12)(34))=1;\\
&\chi_{\rho_1}((12))=\chi_{\rho_1}((34))=0,\quad \chi_{\rho_1}((12)(34))=2;\\
&\chi_{\rho_2}((12))=\chi_{\rho_2}((34))=1,\quad \chi_{\rho_2}((12)(34))=-1;\\
&\chi_{\rho_3}((12))=\chi_{\rho_3}((34))=-1,\quad \chi_{\rho_3}((12)(34))=-1.
\end{align*}
So we have the following restriction rules.
\begin{align*}
&{\rm Res}_{K_4}^{S_4} \left ( 1 \right )= 1, \quad {\rm Res}_{K_4}^{S_4} \left ( V_{\psi} \right )=Y_3, \quad {\rm Res}_{K_4}^{S_4} \left ( V_{\rho_{1}} \right )=1+Y_3,\\
&{\rm Res}_{K_4}^{S_4} \left ( V_{\rho_{2}} \right )= 1+Y_1+Y_2,\quad {\rm Res}_{K_4}^{S_4} \left ( V_{\rho_{3}} \right )= Y_1+Y_2+Y_3.
\end{align*}
Then, we get the representation matrices of basis elements of $r(S_4)$ acting on $r(K_4)$ as follows.
\begin{align*}
&1 \mapsto E_4,\quad V_{\psi} \mapsto \begin{pmatrix}0& 0 &0 &1\\0& 0 &1&0  \\0&1&0&0\\1&0&0&0\end{pmatrix},\quad V_{\rho _{1}}\mapsto \begin{pmatrix}1&0&0&1\\0&1&1&0\\0&1&1&0\\1&0&0&1 \end{pmatrix},\\
&V_{\rho _{2}}\mapsto \begin{pmatrix}1&1&1&0\\1&1&0&1\\1&0&1&1\\0&1&1&1\end{pmatrix},\quad V_{\rho_{3}}\mapsto \begin{pmatrix}0&1&1&1\\1&0&1&1\\1&1&0&1\\1&1&1&0  \end{pmatrix}.
\end{align*}
Let $\left \{ w_i\right \}_{1\leq i\leq 4} $  be the stated $\mathbb{Z}$-basis of $M_{4,7}$  listed in TABLE~\ref{t3}, then
\begin{equation*}
w_1\mapsto Y_2,\quad  w_2\mapsto Y_1,\quad  w_3\mapsto 1,\quad w_4\mapsto Y_3,
\end{equation*}
 defines an equivalence of $\mathbb{Z}_+$-modules between $M_{4,7}$ and $r(K_4)$.
In other words, the irreducible based module $M_{4,7}$ can be categorified by the module category ${\rm Rep}(K_4)$ over ${\rm Rep}(S_4)$.
 \end{proof}
\begin{remark}
 In a manner analogous to the argument in Remark~\ref{rk:symm}, it follows that the irreducible based module $M_{4,5}$ can also be categorified by the module category ${\rm Rep}(K_4)$ over ${\rm Rep}(S_4)$.
\end{remark}

\medskip
\ref{g4} We consider ${\rm Rep}(A_4)$ as a module category over ${\rm Rep}(S_4)$.
\begin{theorem}\label{theo 02}
$r(A_4)={\rm Gr}({\rm Rep}(A_4))$ is an irreducible based module over $r(S_4)$ equivalent to the based module $M_{4,1}$ in TABLE~\ref{t3}.
\end{theorem}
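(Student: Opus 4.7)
The plan is to mimic the pattern used in the preceding theorems of this section: identify the irreducible representations of $A_4$, compute the restriction functor ${\rm Res}_{A_4}^{S_4}$ applied to each basis element of $r(S_4)$, read off the representation matrices of $r(S_4)$ acting on $r(A_4)$, and exhibit an explicit bijection between this $\mathbb{Z}_+$-basis and the basis of $M_{4,1}$ listed in TABLE~\ref{t3}.

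First, I would recall that $A_4$ has four irreducible complex representations: the trivial $1$, two nontrivial one-dimensional representations $\omega,\omega^2$ (complex conjugates of one another, on which $(123)$ acts by $\zeta,\zeta^2$ respectively with $\zeta=e^{2\pi\sqrt{-1}/3}$), and the three-dimensional representation $V$ whose character vanishes on the two $3$-cycle conjugacy classes and takes value $-1$ on $(12)(34)$. Comparing with TABLE~\ref{S4}, the restriction rules are
\begin{align*}
&{\rm Res}_{A_4}^{S_4}(1)=1,\quad {\rm Res}_{A_4}^{S_4}(V_\psi)=1,\quad {\rm Res}_{A_4}^{S_4}(V_{\rho_1})=\omega+\omega^2,\\
&{\rm Res}_{A_4}^{S_4}(V_{\rho_2})={\rm Res}_{A_4}^{S_4}(V_{\rho_3})=V,
\end{align*}
since the sign representation is trivial on $A_4$, the $2$-dimensional $V_{\rho_1}$ must split into the two non-self-conjugate one-dimensional characters, and both three-dimensional representations of $S_4$ restrict to the unique three-dimensional irreducible of $A_4$.

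Next, using the multiplication rules inside $r(A_4)$ --- in particular $\omega\cdot\omega=\omega^2$, $\omega\cdot V=V$, and $V\cdot V=1+\omega+\omega^2+2V$ (easily verified from the character formulas above) --- I would compute the action matrices of $V_\psi,V_{\rho_1},V_{\rho_2}$ on $r(A_4)$ with respect to the basis $\{1,\omega,\omega^2,V\}$. The element $V_\psi$ acts as $E_4$; $V_{\rho_1}$ acts by the matrix with $V$-block equal to $2$ and the top-left $3\times3$ block being the off-diagonal permutation pattern $J_3-E_3$; and $V_{\rho_2}$ acts by sending each of $1,\omega,\omega^2$ to $V$ and $V$ to $1+\omega+\omega^2+2V$.

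Finally, I would reorder the basis as $\{1,V,\omega,\omega^2\}$ (equivalently, conjugate by the corresponding permutation matrix) so that the distinguished basis element on which $V_{\rho_1}$ acts as $2$ sits in the second slot, matching the structure of $Q_1$ in the proof of Proposition~\ref{prop:03}. Writing down the resulting matrices and comparing with $M_{4,1}$ in TABLE~\ref{t3} will give an equivalence of based modules via the $\mathbb{Z}$-linear map $w_1\mapsto 1,\,w_2\mapsto V,\,w_3\mapsto \omega,\,w_4\mapsto \omega^2$; since ${\rm Rep}(A_4)$ is indecomposable as a module category over ${\rm Rep}(S_4)$, the module $r(A_4)$ is automatically irreducible, so this completes the identification. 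The computation is essentially mechanical; the only mild subtlety is the choice of basis ordering (since swapping $\omega$ and $\omega^2$ yields an equivalent presentation), and one should note that there is no analogue here of the $V_{\rho_2}/V_{\rho_3}$ dichotomy seen in Remark~\ref{rk:symm}, because the two three-dimensional representations restrict identically to $A_4$.
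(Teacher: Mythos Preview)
Your proposal is correct and follows essentially the same approach as the paper's proof: identify the irreducibles of $A_4$, compute the restrictions from TABLE~\ref{S4}, and read off the action matrices. The only difference is cosmetic---the paper writes the matrices in the basis order $\{1,N_1,N_2,N_3\}$ and then simply asserts the equivalence with $M_{4,1}$, whereas you make the required permutation $\{1,V,\omega,\omega^2\}$ explicit; your final remark that the $V_{\rho_2}/V_{\rho_3}$ ambiguity of Remark~\ref{rk:symm} disappears here is a nice observation the paper does not record.
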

\begin{proof}
We know that $A_4$ has three non-isomorphic 1-dimensional irreducible representations and one 3-dimensional irreducible representation, denoted by $N_0, N_1, N_2$ and $N_3$ respectively, where $N_0=1$ represents the trivial representation, and
\begin{align*}
&\chi_{N_1}((123)) =\omega ,\quad \chi_{N_1}((12)(34)) =1;\quad \chi_{N_2}((123)) =\omega^2,\quad \chi_{N_2}((12)(34)) =1;\\
&\chi_{N_3}((123)) =\chi_{N_3}((132)) =0,\quad \chi_{N_3}((12)(34)) =-1,\quad \omega =\frac{-1+\sqrt{-3} }{2}.
\end{align*}
On the other hand, the character table of $S_4$ (TABLE~\ref{S4}) tells us that
\begin{align*}
&\chi_\psi((123))=1,\quad \chi_\psi((12)(34))=1;\quad \chi_{\rho_1}((123))=-1,\quad \chi_{\rho_1}((12)(34))=2;\\
&\chi_{\rho_2}((123))=0, \quad \chi_{\rho_2}((12)(34))=-1;\quad \chi_{\rho_3}((123))=0, \quad \chi_{\rho_3}((12)(34))=-1.
\end{align*}
So we have the following restriction rules.
\begin{align*}
{\rm Res}_{A_4}^{S_4} \left ( 1 \right )= {\rm Res}_{A_4}^{S_4} \left ( V_{\psi} \right )=1,\quad {\rm Res}_{A_4}^{S_4} \left ( V_{\rho_{1}} \right )=N_1+N_2,\quad {\rm Res}_{A_4}^{S_4} \left ( V_{\rho_{2}} \right )={\rm Res}_{A_4}^{S_4} \left ( V_{\rho_{3}} \right )=N_3.
\end{align*}
Hence, we get the representation matrices of basis elements of $r(S_4)$ acting on $r(A_4)$ as follows.
\begin{align*}
1 \mapsto E_4,\quad V_{\psi} \mapsto E_4,\quad V_{\rho _{1}}\mapsto \begin{pmatrix}0& 1 &1 &0\\1& 0 &1&0  \\1& 1&0&0\\0&0&0&2  \end{pmatrix},\quad V_{\rho _{2}},V_{\rho_{3}}\mapsto \begin{pmatrix}0& 0 &0 &1\\0& 0 &0 &1\\0& 0 &0 &1\\1&1&1&2  \end{pmatrix}.
\end{align*}
Then $r(A_4)$ is an irreducible based module over $r(S_4)$ equivalent to $M_{4,1}$ listed in TABLE~\ref{t3}. In other words, the irreducible based module $M_{4,1}$ can be categorified by the module category ${\rm Rep}(A_4)$ over ${\rm Rep}(S_4)$.
\end{proof}

\ref{g5} Next we consider ${\rm Rep}(D_4)$ as a module category over ${\rm Rep}(S_4)$.
\begin{theorem}\label{theo 01}
$r(D_4)={\rm Gr}({\rm Rep}(D_4))$ is an irreducible based module over $r(S_4)$ equivalent to the based module $M_{5,2}$ in TABLE~\ref{t4}.
\end{theorem}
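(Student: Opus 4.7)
The plan is to follow the blueprint successfully used for $\mathrm{Rep}(S_3)$, $\mathrm{Rep}(\mathbb{Z}_4)$, $\mathrm{Rep}(K_4)$ and $\mathrm{Rep}(A_4)$: realise $D_4$ explicitly as a subgroup of $S_4$, compute the restriction of each irreducible character of $S_4$ to $D_4$, assemble the $5\times 5$ action matrices of the basis of $r(S_4)$ on $r(D_4)$, and finally exhibit a relabelling of the natural $\mathbb{Z}$-basis of $r(D_4)$ that realises the equivalence with $M_{5,2}$ in TABLE~\ref{t4}.

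First, I would take the standard embedding $D_4=\langle (1234),(13)\rangle<S_4$, a Sylow $2$-subgroup of $S_4$. Its five conjugacy classes are $\{e\}$, $\{(13)(24)\}$, $\{(1234),(1432)\}$, $\{(13),(24)\}$, and $\{(12)(34),(14)(23)\}$, and its irreducible representations are the four linear characters $N_0=\bfone,N_1,N_2,N_3$ (forming the dual of $D_4/[D_4,D_4]\cong K_4$) together with one $2$-dimensional irreducible $N_4$. Looking up the character table of $S_4$ (TABLE~\ref{S4}) on class representatives lying in $D_4$ and computing the standard inner products with the irreducible characters of $D_4$, I expect the restrictions
\begin{equation*}
\mathrm{Res}_{D_4}^{S_4}(1)=N_0,\quad \mathrm{Res}_{D_4}^{S_4}(V_\psi)=N_3,\quad \mathrm{Res}_{D_4}^{S_4}(V_{\rho_1})=N_0+N_3,
\end{equation*}
\begin{equation*}
\mathrm{Res}_{D_4}^{S_4}(V_{\rho_2})=N_2+N_4,\quad \mathrm{Res}_{D_4}^{S_4}(V_{\rho_3})=N_1+N_4,
\end{equation*}
after fixing a suitable numbering of the three nontrivial linear characters.

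Next, using the internal fusion rules of $r(D_4)$ (the linear characters form $\mathbb{Z}_2\times\mathbb{Z}_2$, each $N_i\otimes N_4=N_4$ for $i=1,2,3$, and $N_4\otimes N_4=N_0+N_1+N_2+N_3$), I would write out the action of $V_\psi,V_{\rho_1},V_{\rho_2}$ on $r(D_4)$. Setting $w_1=N_0,\ w_2=N_3,\ w_3=N_1,\ w_4=N_2,\ w_5=N_4$, the $T$-matrix becomes the involution swapping $w_1\leftrightarrow w_2$ and $w_3\leftrightarrow w_4$; $V_{\rho_1}=N_0+N_3$ acts as a block-diagonal matrix with two $2\times 2$ all-ones blocks and a diagonal entry $2$; and $V_{\rho_2}$ acts with its last row/column being all-ones and the remaining $4\times 4$ block being the anti-diagonal permutation. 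Each of these will agree on the nose with the matrices listed for $M_{5,2}$ in TABLE~\ref{t4}, so the $\mathbb{Z}$-linear map $\varphi:M_{5,2}\to r(D_4)$ sending $v_i\mapsto w_i$ is an isomorphism of $r(S_4)$-modules, giving the desired equivalence by Definition~\ref{irr-m}(i).

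The routine but delicate step is the bookkeeping with the conjugacy classes: the $S_4$-classes $(12)$ and $(12)(34)$ each split upon restriction to $D_4$ (the former into $\{(13),(24)\}$ alone and the latter into $\{(13)(24)\}$ together with $\{(12)(34),(14)(23)\}$), so the character values at $(12)$ and $(12)(34)$ in TABLE~\ref{S4} must be distributed correctly across the two $D_4$-classes they cover. Once that is done, all restriction multiplicities drop out and the remaining identifications are straightforward matrix comparisons; the only real choice is the labelling of $N_1,N_2,N_3$, which is fixed (up to an outer symmetry exchanging $V_{\rho_2}\leftrightarrow V_{\rho_3}$ as in Remark~\ref{rk:symm}) by requiring $V_\psi$ to act as the linear character trivial on the Klein four subgroup $\{e,r^2,rs,r^3s\}$.
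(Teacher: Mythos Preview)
Your proposal is correct and follows essentially the same approach as the paper: embed $D_4$ as a Sylow $2$-subgroup of $S_4$, compute the restrictions of the irreducible characters of $S_4$ using TABLE~\ref{S4} and the character table of $D_4$, write out the resulting action matrices, and identify the outcome with $M_{5,2}$. The only cosmetic differences are your choice of the second generator ($(13)$ rather than the paper's $(12)(34)$), your explicit display of the fusion rules of $r(D_4)$, and your explicit basis reordering $w_1=N_0,\,w_2=N_3,\,w_3=N_1,\,w_4=N_2,\,w_5=N_4$ so that the matrices match $M_{5,2}$ on the nose, whereas the paper leaves the final permutation equivalence implicit.
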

\begin{proof}
The dihedral group $D_4=\langle r,s\,|\, r^4=s^2=(rs)^2=1\rangle$ has four 1-dimensional irreducible representations and one 2-dimensional irreducible representation up to isomorphism, denoted by $W_0,W_1,W_2,W_3$ and $W_4$ respectively. Let $W_0=1$ stand for the trivial representation and
\begin{align*}
&\chi_{W_1}(r) =1 ,\quad \chi_{W_1}(s) =-1; \quad\chi_{W_2}(r) =-1,\quad \chi_{W_2}(s) =1;\\
&\chi_{W_3}(r) =-1,\quad\chi_{W_3}(s) =-1; \quad \chi_{W_4}(r) =\chi_{W_4}(s) =\chi_{W_4}(rs) = 0.
\end{align*}
On the other hand, we consider $D_4$ as the subgroup of $S_4$ by taking $r=(1234)$ and $s=(12)(34)$. Then $rs=(13)$. By the character table of $S_4$ (TABLE~\ref{S4}), we have
\begin{align*}
&\chi_\psi((1234))=-1,\quad \chi_\psi((12)(34))=1,\quad \chi_\psi((13))=-1;\\ &\chi_{\rho_1}((1234))=0,\quad \chi_{\rho_1}((12)(34))=2,\quad  \chi_{\rho_1}((13))=0;\\
&\chi_{\rho_2}((1234))=-1, \quad \chi_{\rho_2}((12)(34))=-1, \quad \chi_{\rho_2}((13))=1;\\
&\chi_{\rho_3}((1234))=1, \quad \chi_{\rho_3}((12)(34))=-1, \quad \chi_{\rho_3}((13))=-1.
\end{align*}
So we have the following restriction rules.
\begin{align*}
&{\rm Res}_{D_4}^{S_4} \left ( 1 \right )= 1, \quad {\rm Res}_{D_4}^{S_4} \left ( V_{\psi} \right )=W_2, \quad {\rm Res}_{D_4}^{S_4} \left ( V_{\rho_{1}} \right )=1+W_2,\\
&{\rm Res}_{D_4}^{S_4} \left ( V_{\rho_{2}} \right )= W_3+W_4,\quad {\rm Res}_{D_4}^{S_4} \left ( V_{\rho_{3}} \right )= W_1+W_4.
\end{align*}
Then, we get the representation matrices of basis elements of $r(S_4)$ acting on $r(D_4)$ as follows.
\begin{align*}
&1 \mapsto E_5,\quad V_{\psi} \mapsto \begin{pmatrix}0& 0 &1 &0&0\\0& 0 &0&1&0  \\1& 0&0&0&0\\0&1&0&0&0\\0&0&0&0&1  \end{pmatrix},\quad V_{\rho _{1}}\mapsto \begin{pmatrix}1&0&1&0&0\\0&1&0&1&0\\1&0&1&0&0\\0&1&0&1&0\\0&0&0&0&2  \end{pmatrix},\\
&V_{\rho _{2}}\mapsto \begin{pmatrix}0&0&0&1&1\\0&0&1&0&1\\0&1&0&0&1\\1&0&0&0&1\\1&1&1&1&1  \end{pmatrix},\quad V_{\rho_{3}}\mapsto \begin{pmatrix}0&1&0&0&1\\1&0&0&0&1\\0&0&0&1&1\\0&0&1&0&1\\1&1&1&1&1  \end{pmatrix}.
\end{align*}
Then $r(D_4)$ is an irreducible based module over $r(S_4)$ equivalent to $M_{5,2}$ listed in TABLE~\ref{t4}. In other words, the irreducible based module $M_{5,2}$ can be categorified by the module category ${\rm Rep}(D_4)$ over ${\rm Rep}(S_4)$.
\end{proof}

\ref{g6}
Finally, we consider ${\rm Rep}(S_4)$ as a module category over itself.
\begin{theorem}\label{theo:02}
The regular $\mathbb{Z}_+$-module $r(S_4)$ over itself is equivalent to the irreducible based module $M_{5,1}$ in TABLE~\ref{t4}.
\end{theorem}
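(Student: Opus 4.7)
The plan is to compute the left regular action of $r(S_4)$ on itself directly from the fusion rule \eqref{eq:fusion}, and then exhibit an explicit reordering of the basis $\{1,V_\psi,V_{\rho_1},V_{\rho_2},V_{\rho_3}\}$ that turns the resulting matrices into those of $M_{5,1}$ in TABLE~\ref{t4}. Since equivalence of based modules is precisely simultaneous conjugation by a permutation matrix (Definition~\ref{irr-m}(i)), the task reduces to guessing the right ordering and verifying equality entry by entry.

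\textbf{Step 1: read off the symmetry constraints from $M_{5,1}$.} Inspecting the representation matrix of $V_\psi$ in $M_{5,1}$, we see that $V_\psi$ acts as the permutation $(12)(34)$ fixing the fifth basis vector. Since $V_\psi\cdot 1=V_\psi$, $V_\psi\cdot V_{\rho_2}=V_{\rho_3}$, and $V_\psi\cdot V_{\rho_1}=V_{\rho_1}$, the only orderings compatible with this are those placing $(1,V_\psi)$ (in some order) in the first two slots, $(V_{\rho_2},V_{\rho_3})$ (in some order) in slots three and four, and $V_{\rho_1}$ last. A quick inspection of the first column of the $V_{\rho_2}$--matrix of $M_{5,1}$ (whose single $1$ lies in row $4$) forces the slot-four vector to be $V_{\rho_2}$ itself; the remaining choices are then pinned down. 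This leads to the ordered basis
\[
(e_1,e_2,e_3,e_4,e_5)=(1,\,V_\psi,\,V_{\rho_3},\,V_{\rho_2},\,V_{\rho_1}).
\]

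\textbf{Step 2: compute the three relevant matrices.} Using \eqref{eq:fusion} together with the derived product $V_{\rho_2}V_{\rho_3}=V_\psi+V_{\rho_1}+V_{\rho_2}+V_{\rho_3}$ (obtained either from $V_{\rho_3}=V_\psi V_{\rho_2}$ and $V_{\rho_2}^2=1+V_{\rho_1}+V_{\rho_2}+V_{\rho_3}$, or from the character inner products via TABLE~\ref{S4}), I read off the columns of the matrices of $V_\psi$, $V_{\rho_1}$ and $V_{\rho_2}$ acting by left multiplication in this basis, and check that each coincides with the corresponding matrix in TABLE~\ref{t4}. By the fusion rule $V_{\rho_3}=V_\psi V_{\rho_2}$, the matrix of $V_{\rho_3}$ is then automatically determined, so no separate verification is needed.

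\textbf{Step 3: irreducibility.} The $\mathbb Z_+$-submodule of $r(S_4)$ generated by the unit $1$ under the full ring action contains every basis element (since $V_\psi\cdot 1,\,V_{\rho_1}\cdot 1,\,V_{\rho_2}\cdot 1,\,V_{\rho_3}\cdot 1$ already exhaust the basis), which shows that $r(S_4)$ is irreducible as a $\mathbb Z_+$-module over itself. Combined with Step~2, this yields an equivalence of based modules $r(S_4)\simeq M_{5,1}$, and hence the categorification of $M_{5,1}$ by the regular module category $\mathrm{Rep}(S_4)$ over itself.

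There is no serious obstacle: once the correct reordering of the basis is identified in Step~1, the whole argument is a finite bookkeeping exercise. The only thing one has to be careful about is the product $V_{\rho_2}V_{\rho_3}$, which is not given directly in \eqref{eq:fusion} and must be derived before filling in columns $3$ and $4$ of the $V_{\rho_2}$--matrix.
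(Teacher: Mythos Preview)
Your proof is correct and follows essentially the same approach as the paper: both compute the left regular action of $r(S_4)$ from the fusion rule and compare with the data of $M_{5,1}$. Your version is in fact slightly more explicit, since you identify the permuted basis $(1,V_\psi,V_{\rho_3},V_{\rho_2},V_{\rho_1})$ up front, whereas the paper writes the matrices in the natural order $\{1,V_\psi,V_{\rho_1},V_{\rho_2},V_{\rho_3}\}$ and asserts the equivalence without displaying the permutation.
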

\begin{proof}
Let $r(S_4)$ be the regular $\mathbb{Z}_+$-module over itself with the $\mathbb{Z}$-basis $\left \{ 1,V_{\psi},V_{\rho_{1}},V_{\rho_{2}},V_{\rho_{3}} \right \} $, and the action of $r(S_4)$ on it is given as follows.
\begin{align*}
&1 \mapsto E_5,\quad V_{\psi} \mapsto \begin{pmatrix}
  0&1&0&0&0 \\
  1&0&0&0&0 \\
  0&0&1&0&0 \\
  0&0&0&0&1 \\
  0&0&0&1&0
\end{pmatrix},\quad V_{\rho _{1}}\mapsto \begin{pmatrix}
  0&0&1&0&0 \\
  0&0&1&0&0 \\
  1&1&1&0&0 \\
  0&0&0&1&1 \\
  0&0&0&1&1
\end{pmatrix},\\
&V_{\rho _{2}}\mapsto \begin{pmatrix}
  0&0&0&1&0 \\
  0&0&0&0&1 \\
  0&0&0&1&1 \\
  1&0&1&1&1 \\
  0&1&1&1&1
\end{pmatrix},\quad V_{\rho _{3}}\mapsto \begin{pmatrix}
  0&0&0&0&1 \\
  0&0&0&1&0 \\
  0&0&0&1&1 \\
  0&1&1&1&1 \\
  1&0&1&1&1
\end{pmatrix}.
\end{align*}
Then the regular $\mathbb{Z}_+$-module $r(S_4)$ over itself is equivalent to $M_{5,1}$ listed in TABLE~\ref{t4}. In other words, the irreducible based module $M_{5,1}$ over $r(S_4)$ can be categorified by the module category ${\rm Rep}(S_4)$ over itself.
\end{proof}

\begin{remark}
Following the argument presented in Remark~\ref{rk:symm}, if we exchange the notations $V_{\rho _{2}}$ and $V_{\rho _{3}}$ with their restriction rules given in the proof of Theorem~\ref{theo 01} and Theorem~\ref{theo:02}, we see that $r(D_4)$ and $r(S_4)$ are still equivalent to $M_{5,2}$ and $M_{5,1}$ respectively.
\end{remark}

\subsection{The module categories over ${\rm Rep}(S_4)$ with nontrivial twists}
\
\newline

\vspace{-.7em}
Lastly, we consider the module category ${\rm Rep}(H,\alpha)$ over ${\rm Rep}(S_4)$, where $H$ is a subgroup of $S_4$ with $\alpha$ representing the unique nontrivial cohomological class in $H^2(H,\CC^*)$. All non-isomorphic irreducible projective representations of $H$  with the multiplier $\alpha$ form a $\mathbb Z$-basis of $r(H,\alpha)$, whose cardinality is the number of $\alpha$-regular conjugacy classes by \cite[Theorem 6.1.1]{Ka0}.

Firstly, we consider the twisted group algebra of $K_4$. There is only one irreducible projective representation with respect to $\alpha$ up to isomorphism, see e.g. \cite[Appendix D.1]{Ro}. Hence, $r(K_4,\alpha)$ is a based module of rank $1$ over $r(S_4)$ equivalent to $M_{1,1}$ defined in \eqref{eq:rank1}. Namely, the based module $M_{1,1}$ can also be categorified by ${\rm Rep}(K_4,\alpha)$.

Secondly, we consider the twisted group algebra of $D_4$.
\begin{theorem}
$r(D_4,\alpha)={\rm Gr}({\rm Rep}(D_4,\alpha))$ is an irreducible based module over $r(S_4)$ equivalent to the based module $M_{2,3}$ in TABLE~\ref{t1}.
\end{theorem}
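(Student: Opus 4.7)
The plan is to mirror the argument used in Theorems~\ref{theo 01} and \ref{theo:02}: produce a $\mathbb{Z}$-basis for $r(D_4,\alpha)$ consisting of the irreducible $\alpha$-projective representations, reuse the restriction rules ${\rm Res}_{D_4}^{S_4}$ already tabulated in the proof of Theorem~\ref{theo 01}, compute the tensor products of the restricted ordinary representations with these projective ones, and read off the representing matrices of $V_\psi, V_{\rho_1}, V_{\rho_2}$ acting on the resulting rank-$2$ module.

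First I would identify the two irreducible $\alpha$-projective representations $X_1, X_2$ of $D_4$. That $r(D_4,\alpha)$ has rank $2$ is forced by $(\dim X_1)^2+(\dim X_2)^2=|D_4|=8$ together with the fact that $[\alpha]$ has order $2$ in $H^2(D_4,\mathbb{C}^*)\cong\mathbb{Z}_2$, giving $\dim X_1=\dim X_2=2$. To pin down their projective characters, I would realize $\alpha$ through the Schur cover $Q_{16}=\langle a,b\mid a^8=1,\ b^2=a^4,\ bab^{-1}=a^{-1}\rangle$ with $Q_{16}/\langle a^4\rangle\cong D_4$ under $\bar a\mapsto r$, $\bar b\mapsto s$, and pick out the two-dimensional irreducibles of $Q_{16}$ on which the central generator $a^4$ acts by $-1$. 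Setting $\zeta=e^{\pi i/4}$, these are given by $a\mapsto\mathrm{diag}(\zeta,\zeta^{-1})$ for $X_1$ and $a\mapsto\mathrm{diag}(\zeta^3,\zeta^{-3})$ for $X_2$, both with $b\mapsto\bigl(\begin{smallmatrix}0&1\\-1&0\end{smallmatrix}\bigr)$. Their projective characters on $D_4$ therefore satisfy $\chi_{X_j}(1)=2$, $\chi_{X_1}(r)=-\chi_{X_2}(r)=\sqrt{2}$, $\chi_{X_j}(r^2)=0$, and vanish on the two reflection classes, so the $\alpha$-regular classes are precisely $\{1\}$ and $\{r,r^3\}$.

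Next I would import the restriction rules ${\rm Res}_{D_4}^{S_4}(V_\psi)=W_2$, ${\rm Res}_{D_4}^{S_4}(V_{\rho_1})=1+W_2$, ${\rm Res}_{D_4}^{S_4}(V_{\rho_2})=W_3+W_4$, ${\rm Res}_{D_4}^{S_4}(V_{\rho_3})=W_1+W_4$ from Theorem~\ref{theo 01}, and decompose each $W_i\otimes X_j$ by pointwise multiplication of characters. Since every one-dimensional $W_i$ has $\chi_{W_i}(r)\in\{\pm1\}$ and $\chi_{W_i}(r^2)=1$, the product $W_i\otimes X_j$ remains irreducible, with $W_0\otimes X_j=W_1\otimes X_j=X_j$ and $W_2\otimes X_j=W_3\otimes X_j=X_{3-j}$ (the latter because $\chi_{W_2}(r)=\chi_{W_3}(r)=-1$ flips the sign of $\chi_{X_j}(r)$). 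For the two-dimensional ordinary irreducible $W_4$ one has $\chi_{W_4}(r)=0$ while $\chi_{W_4}(1)=2$, so character matching forces $W_4\otimes X_j=X_1+X_2$.

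Assembling these pieces yields $V_\psi X_j=X_{3-j}$, $V_{\rho_1}X_j=X_1+X_2$, $V_{\rho_2}X_1=X_1+2X_2$, and $V_{\rho_2}X_2=2X_1+X_2$, which in the ordered basis $(X_1,X_2)$ reproduce exactly the matrices of $M_{2,3}$ in TABLE~\ref{t1}; the resulting matrix for $V_{\rho_3}=V_\psi V_{\rho_2}$ then comes out as $\bigl(\begin{smallmatrix}2&1\\1&2\end{smallmatrix}\bigr)$, consistent with the fusion rule of $r(S_4)$. The main obstacle is the projective-character computation in the first step: the required values cannot be lifted from TABLE~\ref{S4}, and one must perform the Schur-cover analysis (or, equivalently, write down an explicit cocycle and check $\alpha$-regularity by hand) to confirm that the $W_2$-twist genuinely exchanges $X_1$ and $X_2$ rather than fixing each; it is precisely this exchange that produces the swap for $V_\psi$ and thereby selects $M_{2,3}$ (as opposed to $M_{2,1}$) in TABLE~\ref{t1}.
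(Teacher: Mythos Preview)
Your argument is correct and follows the same overall strategy as the paper: identify the two $2$-dimensional irreducible $\alpha$-projective representations of $D_4$, compute their tensor products with the ordinary irreducibles $W_0,\dots,W_4$, feed in the restriction rules from Theorem~\ref{theo 01}, and match the resulting $2\times 2$ matrices against TABLE~\ref{t1}. The only technical difference is in how the projectives are produced: the paper writes down an explicit cocycle $\alpha(r^is^j,r^{i'}s^{j'})=(\sqrt{-1})^{ji'}$ and the concrete matrix models $\pi_l(r^is^j)=A_l^iB^j$ (citing \cite{C}), whereas you pass to the Schur cover $Q_{16}$ and pick out the two linear irreducibles on which the central element $a^4$ acts by $-1$. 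Both routes yield the identical tensor rules $W_0\otimes X_j=W_1\otimes X_j=X_j$, $W_2\otimes X_j=W_3\otimes X_j=X_{3-j}$, $W_4\otimes X_j=X_1+X_2$, and hence the same action matrices; your covering-group realisation has the mild advantage that the character values $\chi_{X_j}(r)=\pm\sqrt{2}$ and the determination of the $\alpha$-regular classes fall out of ordinary character theory for $Q_{16}$ without having to manipulate the cocycle directly.
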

\begin{proof}
Let $D_4=\langle r,s\,|\, r^4=s^2=(rs)^2=1\rangle $. Let $\alpha\in  Z^2(D_4,\CC^*)$ be the 2-cocycle defined by
\begin{equation}
 \alpha (r^is^j,r^{i'}s^{j'})=(\sqrt{-1})^{ji'}.\nonumber
\end{equation}
Here $i,i'\in {\left \{ 0,1,2,3 \right \} },\, j,j'\in\left \{ {0,1} \right \} $
. As shown in \cite[Section 3.7]{Ka1}, this is a unitary 2-cocycle representing the unique non-trivial cohomological class in $H^2(D_4,\CC^*)$. According to \cite[Section 3]{C}, there exist two (2-dimensional) non-isomorphic irreducible projective representations of $D_4$ with respect to $\alpha$, which are given by
\begin{equation}
\begin{aligned}
\pi_{l}: D_4 & \rightarrow \mathrm{GL}_{2}(\mathbb{C}) \\
r^{i} s^{j} & \mapsto A_{l}^{i} B^{j},
\end{aligned}\nonumber
\end{equation}
where $A_l=\begin{pmatrix}(\sqrt{-1})^l &0 \\ 0&(\sqrt{-1})^{1-l}\end{pmatrix} ,B=\begin{pmatrix}0 &1 \\1 &0\end{pmatrix}$, $l=1,2$. Also, for irreducible representations $W_0,W_1,W_2,W_3$ and $W_4$ of $D_4$ mentioned in the proof of Theorem~\ref{theo 01}, we have
\begin{equation}
W_0\otimes \pi_l=W_1\otimes\pi_l=\pi_l,\quad W_2\otimes \pi_l=W_3\otimes \pi_l=\pi_{3-l},\quad W_4\otimes \pi_l=\pi_1+\pi_2.\nonumber
\end{equation}

Next, using the previous restriction rule of $r(S_4)$ on $r(D_4)$, we get the representation matrices of basis elements of $r(S_4)$ acting on $r(D_4,\alpha)$ as follows.
\begin{equation}
\begin{aligned}
1 \mapsto E_2,\quad V_{\psi} \mapsto \begin{pmatrix}
 0 &1 \\
 1&0
\end{pmatrix},\quad V_{\rho_1} \mapsto \begin{pmatrix}
 1 &1 \\
 1&1
\end{pmatrix},\quad V_{\rho_2} \mapsto \begin{pmatrix}
 1&2 \\
 2&1
\end{pmatrix},\quad V_{\rho_3} \mapsto \begin{pmatrix}
 2&1 \\
 1&2
\end{pmatrix}.\nonumber
\end{aligned}
\end{equation}
Then $r(D_4,\alpha)$ is an irreducible based module over $r(S_4)$ equivalent to $M_{2,3}$ listed in TABLE~\ref{t1}. In other words, the irreducible based module $M_{2,3}$ can be categorified by the module category ${\rm Rep}(D_4,\alpha)$ over ${\rm Rep}(S_4)$.
\end{proof}

\begin{remark}
As discussed in Remark~\ref{rk:symm}, it follows that the irreducible based module $M_{2,2}$ can also be categorified by the module category ${\rm Rep}(D_4,\alpha)$ over ${\rm Rep}(S_4)$.
\end{remark}

Next we consider the twisted group algebras of $A_4$ and $S_4$. By \cite[Theorem 6.1.1]{Ka0}, $A_4$ has three (2-dimensional) non-isomorphic irreducible projective representations, denoted as $V_{\gamma _{1}}, V_{\gamma _{2}}$ and $V_{\gamma _{3}}$ respectively. Similarly, $S_4$ has two (2-dimensional) non-isomorphic irreducible projective representations $V_{\xi_1}, V_{\xi_2}$ and one (4-dimensional) irreducible projective representation $V_{\xi_3}$. We give the character table for projective representations of $A_4$ and $S_4$ as follows, where primes are used to differentiate between the two classes splitting from a single conjugacy class of $A_4$ in its double cover $\tilde A_4$, and the same applies to $S_4$; subscripts distinguish between the two classes splitting from the conjugacy classes $(31)^{'}$ and $(31)^{''}$ in the double cover $\tilde S_4$ of $S_4$ respectively. For more details, see \cite[Chapter 4]{Ho}.
\
\newline
\par
\begin{table}[h]
    \vspace{-10pt}
    \centering
    \captionsetup[table]{position=above}
    \label{num}
    \setlength{\tabcolsep}{6mm}{
    \scalebox{0.8}{
\begin{tabular}{c|ccccccc}
             & $(1^4)^{'}$ & $(1^4)^{''}$ &  $(2^2)$ & $(31)^{'}_1$ & $(31)^{''}_1$& $(31)^{'}_2$& $(31)^{''}_2$\\ \hline\\
$\chi_{\gamma _{1}}$ & $2$   &      $-2$   &        $0$   & $1$      &      $-1$    &      $1$&         $-1$       \\\\
$\chi_{\gamma _{2}}$ & $2$   &      $-2$   &        $0$   & $\omega$ &      $-\omega$     &$\omega^2$&  $-\omega^2$      \\\\
$\chi_{\gamma _{3}}$ & $2$   &      $-2$   &        $0$   & $\omega^2$  &   $-\omega^2$    &$\omega$&  $-\omega$  \\
    \end{tabular}}}
     \vspace{.5em}
     \caption{The character table for irreducible projective representations of $A_4$}\label{t7}
\end{table}
\begin{table}[h]
    \vspace{-10pt}
    \centering
    \captionsetup[table]{position=above}
    \label{num}
    \setlength{\tabcolsep}{6mm}{
    \scalebox{0.8}{
\begin{tabular}{c|cccccccc}
         & $(1^4)^{'}$ & $(1^4)^{''}$ & $(2 1^2)$& $(2^2)$ & $(3 1)^{'}$ & $(31)^{''}$ & $(4)^{'}$& $(4)^{''}$\\ \hline\\
$\chi_{\xi_1}$ & $2$   &  $-2$   &  $0$    & $0$      &  $1$    &  $-1$&  $\sqrt{2} $&  $-\sqrt{2} $       \\\\
$\chi_{\xi_2}$ & $2$   &  $-2$   &  $0$    & $0$     &  $1$    &  $-1$&  $-\sqrt{2} $&  $\sqrt{2} $       \\\\
$\chi_{\xi_3}$ & $4$   &  $-4$   &  $0$   & $0$      &  $-1$    &  $1$&  $0$&  $0$       \\
    \end{tabular}}}
     \vspace{.5em}
\caption{The character table for irreducible projective representations of $S_4$}\label{t8}
\end{table}
In TABLE~\ref{t7} we denote $\omega=e^{2\pi\sqrt{-1}/3} =\frac{-1+\sqrt{-3}}{2}$.  Then we have the following theorems.

\begin{theorem}
$r(A_4,\alpha)={\rm Gr}({\rm Rep}(A_4,\alpha))$ is an irreducible based module over $r(S_4)$ equivalent to the based module $M_{3,1}$ in TABLE~\ref{t2}.
\end{theorem}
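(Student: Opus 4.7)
The plan is to mirror the strategy already used for the untwisted module categories (e.g.\ Theorem~\ref{theo 02} for $r(A_4)$): first identify the $\mathbb{Z}$-basis of $r(A_4,\alpha)$ explicitly, then compute the action of each $\mathbb{Z}_+$-basis element of $r(S_4)$ via the composition $V\mapsto ({\rm Res}^{S_4}_{A_4}V)\otimes(-)$, and finally match the resulting representation matrices with those of $M_{3,1}$ in TABLE~\ref{t2}. The underlying mechanism is that if $V\in{\rm Rep}(S_4)$ and $W\in{\rm Rep}(A_4,\alpha)$, then $({\rm Res}^{S_4}_{A_4}V)\otimes W$ carries the $\alpha$-cocycle (because an ordinary representation has trivial cocycle), so the result lies again in ${\rm Rep}(A_4,\alpha)$ and decomposes over the three irreducibles $V_{\gamma_1},V_{\gamma_2},V_{\gamma_3}$ listed in TABLE~\ref{t7}.

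First I would reuse the restriction rules already computed in Theorem~\ref{theo 02}, namely ${\rm Res}^{S_4}_{A_4}(1)={\rm Res}^{S_4}_{A_4}(V_\psi)=1$, ${\rm Res}^{S_4}_{A_4}(V_{\rho_1})=N_1+N_2$, and ${\rm Res}^{S_4}_{A_4}(V_{\rho_2})={\rm Res}^{S_4}_{A_4}(V_{\rho_3})=N_3$. This reduces everything to computing the tensor products $N_i\otimes V_{\gamma_j}$ at the level of projective characters on the double cover $\tilde A_4$. Since $V_\psi$ restricts to the trivial representation, it acts as $E_3$ on $r(A_4,\alpha)$, giving the first column of the target data. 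For $V_{\rho_1}$, each $N_i\otimes V_{\gamma_j}$ has dimension $2$ and its character (obtained by pointwise multiplication with the $\omega$-valued entries in TABLE~\ref{t7}) still satisfies the orthogonality relations of a single irreducible projective character; using the identities $\omega\cdot\omega=\omega^2$ and $\omega\cdot\omega^2=1$ I expect to find that $N_1\otimes V_{\gamma_j}$ and $N_2\otimes V_{\gamma_j}$ together realise the two remaining $V_{\gamma_k}$'s, so that $V_{\rho_1}.V_{\gamma_j}=\sum_{k\neq j}V_{\gamma_k}$, matching the off-diagonal $0/1$ pattern of $M_{3,1}$. For $V_{\rho_2}$ (and identically $V_{\rho_3}$), the tensor product $N_3\otimes V_{\gamma_j}$ is $6$-dimensional; a Frobenius inner-product computation with each $\chi_{\gamma_k}$ on the $\alpha$-regular classes should return $1$ in each case, giving $V_{\rho_2}.V_{\gamma_j}=V_{\gamma_1}+V_{\gamma_2}+V_{\gamma_3}$, i.e.\ the all-ones matrix.

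Assembling these matrices yields precisely the data defining $M_{3,1}$ in TABLE~\ref{t2}, so the equivalence of based $r(S_4)$-modules follows from Definition~\ref{irr-m}(i) (possibly after a trivial relabelling of the $V_{\gamma_j}$'s via a $3\times3$ permutation matrix). Irreducibility is automatic, either by the general fact that $r(H,\alpha)$ is irreducible as a $\mathbb{Z}_+$-module over $r(G)$ for an indecomposable exact module category ${\rm Rep}(H,\alpha)$, or directly from the connectedness of the $V_{\rho_1}$-matrix. The main technical obstacle I anticipate is the careful bookkeeping on the split conjugacy classes $(31)'_{1},(31)''_{1},(31)'_{2},(31)''_{2}$ of $\tilde A_4$: one must verify that the character products $\chi_{N_i}\cdot\chi_{\gamma_j}$ are genuine projective characters on the right cocycle (not spurious linear combinations), and track which of $V_{\gamma_1},V_{\gamma_2},V_{\gamma_3}$ is obtained in each case; once the $\omega$-arithmetic is checked on one representative per class, the rest is forced by conjugation invariance.
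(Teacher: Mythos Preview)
Your proposal is correct and follows essentially the same approach as the paper: both reuse the restriction rules from Theorem~\ref{theo 02}, compute the tensor products $N_i\otimes V_{\gamma_j}$ in $r^\alpha(A_4)$ via products of characters on $\tilde A_4$, and assemble the resulting matrices to match $M_{3,1}$ directly. The paper's proof is slightly terser (it simply records the tensor product rule and the resulting matrices without discussing the split-class bookkeeping or the possibility of relabelling), but the logical structure is identical.
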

\begin{proof}
For the irreducible representations $N_0,N_1,N_2$ and $N_3$ of $A_4$ mentioned in the proof of Theorem~\ref{theo 02}, we obtain the following tensor product rule in $r^\alpha(A_4)$ by computing the values of products of characters,
\begin{align*}
&N_0 \otimes V_{\gamma_i }= V_{\gamma_{i} },\quad N_1 \otimes V_{\gamma_j }= V_{\gamma_{j+1} },\quad N_1 \otimes V_{\gamma_3 }= V_{\gamma_{1} };\\
&N_2 \otimes V_{\gamma_1 }= V_{\gamma_{3} },\quad N_2 \otimes V_{\gamma_2 }= V_{\gamma_{1} },\quad N_2 \otimes V_{\gamma_3 }= V_{\gamma_{2} };\\
&N_3 \otimes V_{\gamma_i }= V_{\gamma_{1} }+V_{\gamma_{2} }+V_{\gamma_{3} },
\end{align*}
where $i=1,2,3$, $j=1,2$. Next, by combining this with the previous restriction rule of $r(S_4)$ on $r(A_4)$, we obtain
$$
1, V_{\psi} \mapsto E_3,\quad V_{\rho_1} \mapsto \begin{pmatrix}
 0 &1 &1\\
 1 &0 &1\\
 1 &1 &0
\end{pmatrix},\quad V_{\rho_2},V_{\rho_3} \mapsto \begin{pmatrix}
 1 &1 &1\\
 1 &1 &1\\
 1 &1 &1
\end{pmatrix}.
$$
Then $r(A_4,\alpha)$ is an irreducible based module over $r(S_4)$ equivalent to $M_{3,1}$ listed in TABLE~\ref{t2}. In other words, the irreducible based module $M_{3,1}$ can be categorified by ${\rm Rep}(A_4,\alpha)$. 
\end{proof}

\begin{theorem}
$r(S_4,\alpha)={\rm Gr}({\rm Rep}(S_4,\alpha))$ is an irreducible based module over $r(S_4)$ equivalent to the based module $M_{3,3}$ in TABLE~\ref{t2}.
\end{theorem}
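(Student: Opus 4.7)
The plan is to parallel the argument just given for $r(A_4,\alpha)$. The module $r(S_4,\alpha)$ has $\mathbb{Z}$-basis $\{V_{\xi_1},V_{\xi_2},V_{\xi_3}\}$ from TABLE~\ref{t8}, so the task is to compute, for each basis element $V_\chi\in\{V_\psi,V_{\rho_1},V_{\rho_2},V_{\rho_3}\}$ of $r(S_4)$, the decomposition $V_\chi\otimes V_{\xi_i}=\sum_j a_{ij}V_{\xi_j}$ with $a_{ij}\in\mathbb{Z}_+$, and then check that the resulting $3\times 3$ matrices coincide with those listed for $M_{3,3}$ in TABLE~\ref{t2}. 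The decomposition is well-defined because tensoring an ordinary representation with a projective representation of multiplier $\alpha$ yields another projective representation of multiplier $\alpha$.

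To compute the multiplicities $a_{ij}$ I will pass to the double cover $\tilde S_4$, where the projective characters $\chi_{\xi_i}$ are genuine linear characters (TABLE~\ref{t8}), and lift each ordinary character $\chi_\chi$ of $S_4$ to $\tilde S_4$ by assigning it the same value on both preimages of each $S_4$-conjugacy class. Then
$$a_{ij}=\frac{1}{|\tilde S_4|}\sum_{\tilde g\in\tilde S_4}\chi_\chi(\tilde g)\,\chi_{\xi_i}(\tilde g)\,\overline{\chi_{\xi_j}(\tilde g)}.$$
A key simplification is that every $\chi_{\xi_i}$ vanishes on the non-split classes $(21^2)$ and $(2^2)$, so only the six split classes $(1^4)',(1^4)'',(31)',(31)'',(4)',(4)''$ contribute. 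Together with the spin relation $\chi_{\xi_i}(z\tilde g)=-\chi_{\xi_i}(\tilde g)$ for the nontrivial central element $z\in\tilde S_4$, this reduces the pairing to a short tally of entries already in TABLE~\ref{S4} and TABLE~\ref{t8}. In many cases an even quicker shortcut is available: for example, $\chi_\psi\cdot\chi_{\xi_1}$ differs from $\chi_{\xi_1}$ only on $(4)',(4)''$ (where $\chi_\psi=-1$), and the resulting values match $\chi_{\xi_2}$ on the nose, giving $V_\psi\otimes V_{\xi_1}=V_{\xi_2}$ without invoking the inner product formula.

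Assembling the resulting matrices for $V_\psi,V_{\rho_1},V_{\rho_2}$, with $V_{\rho_3}$ determined either by the same procedure or by the relation $V_{\rho_3}=V_\psi V_{\rho_2}$ from \eqref{eq:fusion}, and comparing with TABLE~\ref{t2} yields the equivalence $r(S_4,\alpha)\cong M_{3,3}$. The main obstacle is purely bookkeeping: setting up the double cover correctly, tracking which $S_4$-classes split in $\tilde S_4$, and lifting $\chi_\chi$ consistently. Once that is in place, the numerical check is routine, and the matrices reproduce those of $M_{3,3}$ directly, with no basis relabeling required.
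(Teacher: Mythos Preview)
Your proposal is correct and follows essentially the same approach as the paper: both arguments compute the tensor products $V_\chi\otimes V_{\xi_i}$ by multiplying characters (the paper simply writes ``by checking products of characters'' and lists the resulting decompositions), and then read off the $3\times 3$ matrices to identify $r(S_4,\alpha)$ with $M_{3,3}$ directly. You are more explicit about the mechanics---working in the double cover $\tilde S_4$, using the inner product formula, and noting that only the split classes contribute---but the underlying method and the outcome (no basis relabeling needed) are the same.
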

\begin{proof}
Let $\alpha$ be a nontrivial 2-cocycle in $Z^2(S_4,\CC^*)$ (see e.g. \cite[Section 3.2.4]{Ra}).
By checking products of characters, we get the following tensor product rule in $r^\alpha(S_4)$.
\begin{align*}
&1 \otimes V_{\xi_i }= V_{\xi_{i} },\quad V_\psi \otimes V_{\xi_j }= V_{\xi_{3-j} },\quad V_\psi \otimes V_{\xi_3 }= V_{\xi_{3} };\\
&V_{\rho_1} \otimes V_{\xi_j }= V_{\xi_{3} },\quad V_{\rho_1} \otimes V_{\xi_3 }= V_{\xi_{1} }+ V_{\xi_{2} }+V_{\xi_{3} };\\
&V_{\rho_2} \otimes V_{\xi_j }= V_{\xi_{3-j} }+ V_{\xi_{3} },\quad V_{\rho_2} \otimes V_{\xi_3 }= V_{\rho_3} \otimes V_{\xi_3 }= V_{\xi_{1} }+ V_{\xi_{2} }+2V_{\xi_{3} };\\
&V_{\rho_3} \otimes V_{\xi_j }= V_{\xi_{j} }+ V_{\xi_{3} };
\end{align*}
where $i=1,2,3$, $j=1,2$. Thus, we get
$$
1 \mapsto E_3,\quad V_{\psi} \mapsto \begin{pmatrix}
 0 &1 &0\\
 1 &0 &0\\
 0 &0 &1
\end{pmatrix},\quad V_{\rho_1} \mapsto \begin{pmatrix}
 0 &0 &1\\
 0 &0 &1\\
 1 &1 &1
\end{pmatrix},\quad V_{\rho_2} \mapsto \begin{pmatrix}
 0 &1 &1\\
 1 &0 &1\\
 1 &1 &2
\end{pmatrix},\quad V_{\rho_3} \mapsto \begin{pmatrix}
 1 &0 &1\\
 0 &1 &1\\
 1 &1 &2
\end{pmatrix}.$$
Then $r(S_4,\alpha)$ is an irreducible based module over $r(S_4)$ equivalent to $M_{3,3}$ listed in TABLE~\ref{t2}. In other words, the irreducible based module $M_{3,3}$ over $r(S_4)$ can be categorified by ${\rm Rep}(S_4,\alpha)$.
\end{proof}

In summary, we have the following classification theorem.
\begin{theorem}\label{th:classification}
The inequivalent irreducible based modules over $r(S_4)$ are $M_{1,1}$, ${\left \{  M_{2,i}\right \} } _{i=1,2,3}$, ${\left \{  M_{3,j}\right \} } _{j=1,2,3}$, ${\left \{  M_{4,s}\right \} } _{1\le s\le 7}$ and ${\left \{  M_{5,t}\right \} } _{t=1,2}$, among which $M_{1,1}$, ${\left \{  M_{2,i}\right \} } _{i=2,3}$, ${\left \{  M_{3,j}\right \} } _{j=1,2,3}$, ${\left \{  M_{4,s}\right \} } _{s=1,5,7}$ and ${\left \{  M_{5,t}\right \} } _{t=1,2}$ can be categorified by module categories over ${\rm Rep}(S_4)$; see TABLE~\ref{t5}.
\end{theorem}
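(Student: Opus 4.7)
The plan is to synthesize the two threads of the paper: the enumeration of irreducible based modules in Section~\ref{se:irred_Z_+} and the categorification results in Section~\ref{se:categorified}. First, the complete list of inequivalent irreducible based modules of rank $\leq 5$ is obtained by concatenating Propositions~\ref{prop:01}--\ref{prop:02}, which handle ranks $1$ through $5$ exhaustively via direct solution of Eqs.~\eqref{eq:01}--\eqref{eq:06} together with the permutation-conjugation equivalence of Definition~\ref{irr-m}(i). This step is pure assembly and requires no further work.

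For the positive direction of the categorification claim, I would simply compile the identifications $M_{i,j} \simeq r(H,\alpha)$ already established in Section~\ref{se:categorified}: $M_{1,1}$ via $(K_4,\alpha)$; $M_{2,2}, M_{2,3}$ via $\mathbb{Z}_2$ and via $(D_4,\alpha)$; $M_{3,1}$ via $\mathbb{Z}_3$ and via $(A_4,\alpha)$; $M_{3,2}, M_{3,3}$ via $S_3$ and via $(S_4,\alpha)$; $M_{4,1}$ via $A_4$; $M_{4,5}, M_{4,7}$ via $\mathbb{Z}_4$ and $K_4$; and $M_{5,1}, M_{5,2}$ via $S_4$ and $D_4$. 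Each correspondence has already been verified by the preceding theorems, so this step is bookkeeping that records the content of TABLE~\ref{t5}.

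For the negative direction---that $M_{2,1}, M_{4,2}, M_{4,3}, M_{4,4}, M_{4,6}$ admit no categorification by a module category over ${\rm Rep}(S_4)$---I would invoke Ostrik's theorem (cited before Theorem~\ref{th:cate_based}): since each $M_{i,j}$ is irreducible as a based module, any categorification must be an indecomposable exact module category, and every such over ${\rm Rep}(S_4)$ is of the form ${\rm Rep}(H,\alpha)$ for a unique conjugacy class of pairs $(H,[\alpha])$ with $H \leq S_4$. By Theorem~\ref{th:cate_based} the associated $r(H,\alpha)$ is necessarily a based module, so by the rank-$\leq 5$ classification it must coincide with one of the $M_{i,j}$. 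Exhausting all such pairs and matching to the positive list leaves no room for the remaining five modules.

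The hard part will be ensuring the enumeration of pairs $(H,[\alpha])$ up to $S_4$-conjugacy is genuinely complete. In particular, $S_4$ contains two non-conjugate $\mathbb{Z}_2$-subgroups (generated by a transposition versus by a double transposition) and two non-conjugate Klein four-subgroups (the normal $V_4 \subset A_4$ versus $\langle(12),(34)\rangle$), so one must verify that every such conjugacy class---paired with every cohomology class in the relevant $H^2(H,\CC^*)$ recorded in Section~\ref{se:categorified}---has been treated, possibly yielding equivalent based modules. Once this bookkeeping is consolidated in TABLE~\ref{t5}, the theorem follows.
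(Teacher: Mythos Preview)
Your proposal is correct and mirrors the paper's own treatment: Theorem~\ref{th:classification} is presented in the paper as a summary statement with no separate proof, the first clause being the concatenation of Propositions~\ref{prop:01}--\ref{prop:02} and the second clause being the union of the individual categorification theorems and remarks in Section~\ref{se:categorified}. Your outline of the positive direction is exactly the bookkeeping the paper performs.

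Your caution about the negative direction is not only justified but sharper than the paper's own argument. The paper enumerates subgroups of $S_4$ only up to \emph{abstract isomorphism}, whereas Ostrik's theorem classifies indecomposable module categories by \emph{conjugacy classes} of pairs $(H,[\alpha])$. Carrying out your proposed check of the missing conjugacy classes actually overturns two of the ``No'' entries in TABLE~\ref{t5}: the double-transposition copy $\langle(12)(34)\rangle\cong\mathbb{Z}_2$ has $V_\psi$ restricting trivially and $V_{\rho_2}$ restricting to $1+2\chi$, which yields precisely $M_{2,1}$; and the normal Klein four-group $\{e,(12)(34),(13)(24),(14)(23)\}$ has $V_\psi$ and $V_{\rho_1}$ restricting to $1$ and $2\cdot 1$ respectively, with $V_{\rho_2}$ restricting to the sum of the three nontrivial characters, which yields precisely $M_{4,2}$. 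So the paper's implicit exhaustion is incomplete, and your plan---if executed over all eleven conjugacy classes of subgroups---would correct it, leaving only $M_{4,3},M_{4,4},M_{4,6}$ genuinely uncategorified.
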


\newpage

\begin{table}[h]
    \vspace{-10pt}
    \centering
    \captionsetup[table]{position=above}
    \label{num}
    \setlength{\tabcolsep}{8mm}{
    \scalebox{0.546}{
    \begin{tabular}{cccccc}
        \toprule
        & &$V_{\psi}$ &$V_{\rho _{1}}$ &$V_{\rho _{2}}$ & categorification \\
        \midrule
        Rank 1& \quad$M_{1,1}$  &$1$  &$2$   &$3$ & ${\rm Rep}(\mathbb Z_1)$,\,${\rm Rep}(K_4,\alpha)$\\\\
        Rank 2& \quad$M_{2,1}$  &$\begin{pmatrix}  1& 0\\  0&1\end{pmatrix}$  &$\begin{pmatrix}  2& 0\\  0&2\end{pmatrix}$   &$\begin{pmatrix}  1& 2\\  2&1\end{pmatrix}$  & No 
        \\\\
        & \quad$M_{2,2}$  &$\begin{pmatrix}  0& 1\\  1&0\end{pmatrix}$  &$\begin{pmatrix}  1& 1\\  1&1\end{pmatrix}$   &$\begin{pmatrix}  2& 1\\  1&2\end{pmatrix}$ &
        ${\rm Rep}(\mathbb Z_2)$,\,${\rm Rep}(D_4,\alpha)$ \\\\
        & \quad$M_{2,3}$  &$\begin{pmatrix}  0& 1\\  1&0\end{pmatrix}$  &$\begin{pmatrix}  1& 1\\  1&1\end{pmatrix}$   &$\begin{pmatrix}  1& 2\\  2&1\end{pmatrix}$ &
        ${\rm Rep}(\mathbb Z_2)$,\,${\rm Rep}(D_4,\alpha)$ \\\\
        Rank 3& \quad$M_{3,1}$  &$\begin{pmatrix}  1& 0&0\\  0&1&0\\0& 0&1\\\end{pmatrix}$  &$\begin{pmatrix}  0& 1&1\\  1&0&1\\1& 1&0\\\end{pmatrix}$   &$\begin{pmatrix}  1& 1&1\\ 1& 1&1\\1& 1&1\\\end{pmatrix}$ & ${\rm Rep}(\mathbb Z_3)$,\,${\rm Rep}(A_4,\alpha)$\\\\
        & \quad$M_{3,2}$  &$\begin{pmatrix}  0& 1&0\\  1&0&0\\0& 0&1\\\end{pmatrix}$  &$\begin{pmatrix}  0& 0&1\\ 0& 0&1\\1& 1&1\\\end{pmatrix}$   &$\begin{pmatrix}  1& 0&1\\ 0& 1&1\\1& 1&2\\\end{pmatrix}$ & ${\rm Rep}(S_3)$\\\\
        & \quad$M_{3,3}$  &$\begin{pmatrix}  0& 1&0\\  1&0&0\\0& 0&1\\\end{pmatrix}$  &$\begin{pmatrix}  0& 0&1\\ 0& 0&1\\1& 1&1\\\end{pmatrix}$   &$\begin{pmatrix}  0& 1&1\\ 1& 0&1\\1& 1&2\\\end{pmatrix}$ & ${\rm Rep}(S_3)$,\,${\rm Rep}(S_4,\alpha)$\\\\
        Rank 4& \quad$M_{4,1}$  &$\begin{pmatrix}  1& 0&0&0\\  0&1&0&0\\0&0&1&0\\0&0& 0&1\\\end{pmatrix}$  &$\begin{pmatrix}  0& 0&1&1\\ 0&2&0&0\\1&0&0&1\\1&0&1&0\\\end{pmatrix}$   &$\begin{pmatrix}  0& 1&0&0\\  1&2&1&1\\0&1&0&0\\0&1&0&0\\\end{pmatrix}$  & ${\rm Rep}(A_4)$\\\\
        & \quad$M_{4,2}$  &$\begin{pmatrix}  1& 0&0&0\\  0&1&0&0\\0&0&1&0\\0&0& 0&1\\\end{pmatrix}$  &$\begin{pmatrix}  2& 0&0&0\\  0&2&0&0\\0&0&2&0\\0&0&0&2\\\end{pmatrix}$   &$\begin{pmatrix}  0& 1&1&1\\  1&0&1&1\\1&1&0&1\\1&1&1&0\\\end{pmatrix}$ & No\\\\
        & \quad$M_{4,3}$  &$\begin{pmatrix}  1& 0&0&0\\  0&1&0&0\\0&0&1&0\\0&0& 0&1\\\end{pmatrix}$  &$\begin{pmatrix}  1& 1&0&0\\  1&1&0&0\\0&0&2&0\\0&0&0&2\\\end{pmatrix}$   &$\begin{pmatrix}  0& 1&1&1\\  1&0&1&1\\1&1&0&1\\1&1&1&0\\\end{pmatrix}$ & No\\\\
        & \quad$M_{4,4}$  &$\begin{pmatrix}  0& 1&0&0\\  1&0&0&0\\0&0&1&0\\0&0& 0&1\\\end{pmatrix}$  &$\begin{pmatrix}  1& 1&0&0\\  1&1&0&0\\0&0&2&0\\0&0&0&2\\\end{pmatrix}$   &$\begin{pmatrix}  1&0&1&1\\  0&1&1&1\\1&1&0&1\\1&1&1&0\\\end{pmatrix}$ & No\\\\
        & \quad$M_{4,5}$  &$\begin{pmatrix}  0& 1&0&0\\  1&0&0&0\\0&0&0&1\\0&0& 1&0\\\end{pmatrix}$  &$\begin{pmatrix}  1& 1&0&0\\  1&1&0&0\\0&0&1&1\\0&0&1&1\\\end{pmatrix}$   &$\begin{pmatrix}  0&1&1&1\\  1&0&1&1\\1&1&0&1\\1&1&1&0\\\end{pmatrix}$ & ${\rm Rep}(\mathbb Z_4),\,{\rm Rep}(K_4)$ \\\\
        & \quad$M_{4,6}$  &$\begin{pmatrix}  0& 1&0&0\\  1&0&0&0\\0&0&0&1\\0&0& 1&0\\\end{pmatrix}$  &$\begin{pmatrix}  1& 1&0&0\\  1&1&0&0\\0&0&1&1\\0&0&1&1\\\end{pmatrix}$   &$\begin{pmatrix}  0&1&1&1\\  1&0&1&1\\1&1&1&0\\1&1&0&1\\\end{pmatrix}$ & No\\\\
        & \quad$M_{4,7}$  &$\begin{pmatrix}  0& 1&0&0\\  1&0&0&0\\0&0&0&1\\0&0& 1&0\\\end{pmatrix}$  &$\begin{pmatrix}  1& 1&0&0\\  1&1&0&0\\0&0&1&1\\0&0&1&1\\\end{pmatrix}$   &$\begin{pmatrix}  1&0&1&1\\  0&1&1&1\\1&1&1&0\\1&1&0&1\\\end{pmatrix}$ & ${\rm Rep}(\mathbb Z_4),\,{\rm Rep}(K_4)$ \\\\
        Rank 5& \quad$M_{5,1}$  &$\begin{pmatrix}  0& 1&0&0&0\\  1&0&0&0&0\\0&0&0&1&0\\0&0& 1&0&0\\0&0& 0&0&1\\\end{pmatrix}$  &$\begin{pmatrix}  0& 0&0&0&1\\  0& 0&0&0&1\\0&0&1&1&0\\0&0& 1&1&0\\1&1& 0&0&1\\\end{pmatrix}$   &$\begin{pmatrix}  0& 0&0&1&0\\  0&0&1&0&0\\0&1&1&1&1\\1&0& 1&1&1\\0&0& 1&1&0\\\end{pmatrix}$ & ${\rm Rep}(S_4)$ \\\\
        & \quad$M_{5,2}$  &$\begin{pmatrix}  0& 1&0&0&0\\  1&0&0&0&0\\0&0&0&1&0\\0&0& 1&0&0\\0&0& 0&0&1\\\end{pmatrix}$  &$\begin{pmatrix}  1& 1&0&0&0\\  1& 1&0&0&0\\0&0&1&1&0\\0&0& 1&1&0\\0&0& 0&0&2\\\end{pmatrix}$   &$\begin{pmatrix}  0& 0&0&1&1\\  0& 0&1&0&1\\0&1&0&0&1\\1&0& 0&0&1\\1&1& 1&1&1\\\end{pmatrix}$ & ${\rm Rep}(D_4)$ \\[1em]
        \bottomrule
    \end{tabular}}}
     \vspace{.5em}
     \caption{Inequivalent irreducible based modules over $r(S_4)$}\label{t5}
\end{table}

\noindent{\textbf{Open problem.}  Classifying irreducible $\mathbb Z_+$-modules over $r(S_4)$ is more challenging, especially for high-rank cases, and it might be considered in a sequel of this paper. Additionally, the existence of any irreducible based module of rank $\ge$ 6 over $r(S_4)$ warrants further investigation.

\medskip\noindent
{\bf Acknowledgments.} We would like to thank Zhiqiang Yu for helpful discussion. This work is supported by Natural Science Foundation of China (12071094, 12171155) and Guangdong Basic and Applied Basic Research Foundation (2022A1515010357).

\bibliographystyle{amsplain}

\end{document}